\DeclareMathOperator{\diag}{diag}
\DeclareMathOperator{\trace}{trace}
\DeclareMathOperator{\vect}{vec}
\DeclareMathOperator{\range}{range}
\definecolor{myblue}{RGB}{231, 245, 254}
\newtheorem{theorem}{Theorem}
\newcommand{\dd}{\mathrm{d}} 
\renewcommand{\t}{^{\mbox{\tiny\sf T}}} 
\newcommand{\R}{\mathbb{R}}
\def\send#1#2{\stackrel{#1}{\hbox to #2{\rightarrowfill}}}
\def\-{\!\!\!\!\!-}
\newcommand{\rank}{{\rm rank\;}}
\newtheorem{lemma}{Lemma}
\newtheorem{remark}{Remark}
\newtheorem{proposition}{Proposition}
\newtheorem{corollary}{Corollary}
\def\R{{\rm I\!R}} 
\newcounter{seqn}[equation]
\def\theseqn{\arabic{equation}\alph{seqn}}
\def\endseqn{\eqno \@seqnnum
$$\ignorespaces}
\def\@seqnnum{(\theseqn)}
\newskip\mcentering \mcentering=0pt plus 1000pt minus 1000pt
\def\meqalignno#1{
\halign to\displaywidth{
    \hbox to 0pt{\kern\displaywidth\llap{$##$}\hss}\tabskip=\mcentering
    &\hfil$\displaystyle{##}$\tabskip=\mcentering
   &&$\displaystyle{{}##}$\hfil\tabskip=\mcentering
    \crcr
    #1\crcr}}
\def\dspace{\multiply\normalbaselineskip 150
		  \divide\normalbaselineskip 100 \normalbaselines
		  \csname @@normalbaselineskip\endcsname\normalbaselineskip}
\def\sspace{\multiply\normalbaselineskip 200
		 \divide\normalbaselineskip 300 \normalbaselines
		 \csname @@normalbaselineskip\endcsname\normalbaselineskip}
\def\sdspace{\multiply\normalbaselineskip 160
		 \divide\normalbaselineskip 150 \normalbaselines
		 \csname @@normalbaselineskip\endcsname\normalbaselineskip}
\def\@{\tilde}
\def\3dot#1{\buildrel\textstyle...\over#1}
\renewcommand{\R}{\mathbb{R}}
\begin{document}

\title{Optimal Covariance Steering for \\
Continuous-Time Linear Stochastic Systems \\
With Multiplicative Noise}

\author{Fengjiao Liu and Panagiotis Tsiotras
\thanks{This work has been supported by NASA University Leadership Initiative award 80NSSC20M0163 and ONR award N00014-18-1-2828.}
\thanks{F. Liu and P. Tsiotras are with the School of Aerospace Engineering, Georgia Institute of Technology, Atlanta, GA 30332 USA (e-mail: \{fengjiao, tsiotras\}@gatech.edu).}}

\maketitle

\begin{abstract}
In this paper we study the finite-horizon optimal covariance steering problem for a continuous-time linear stochastic system subject to both additive and multiplicative noise.
The noise can be continuous or it may contain jumps. 
Additive noise does not depend on the state or the control, whereas multiplicative noise has a magnitude proportional to the current state. 
The cost is assumed to be quadratic in both the state and the control. 
First, the controllability of the state covariance is established under mild assumptions. 
Then, the optimal control for steering the covariance from some initial to some final value is provided. 
Lastly, the existence and uniqueness of the optimal control is shown. 
In the process, we provide a result of independent interest regarding the maximal interval of existence of the solution to a matrix Riccati differential equation.
\end{abstract}

\begin{IEEEkeywords}
Covariance control, linear stochastic systems, state-dependent noise, Riccati differential equation
\end{IEEEkeywords}

\section{Introduction}

Covariance control theory aims to quantify and control the uncertainty in dynamical systems. 
For a brief history of covariance control theory, please refer to \cite[Section I]{liu2022add}. 
Covariance control, specifically over a finite horizon, is often referred to as covariance steering. 
In general, there are infinitely many ways to steer the state covariance from a given initial covariance to a given final covariance. 
We are particularly interested in optimally steering the state covariance of a continuous-time linear stochastic system, with respect to a quadratic cost functional
\begin{equation} \label{cost-Q}
J(u) \triangleq \mathbb{E}\left[\int_{0}^{1} \Big( x(t)\t Q(t) x(t) + u(t)\t R(t) u(t) \Big) \, \dd t\right],
\end{equation}
where $x(t) \in \mathbb{R}^{n}$ is the state, $u(t) \in \mathbb{R}^{p}$ is the control input, and $R(t) \succ 0$ and $Q(t) \succeq 0$ are matrices of dimensions $p \times p$ and $n \times n$, respectively, and are continuous on the time interval $[0, 1]$.

Problems with cost such as \eqref{cost-Q} have been studied for linear stochastic systems subject to additive white Gaussian noise in \cite{chen2016I, chen2016II, chen2018III}. 
Specifically, it is shown that there exists a unique optimal control for steering the state covariance from any initial positive definite covariance to any final positive definite covariance, and the optimal control can be solved in closed form, provided the noise channel coincides with the control channel. 
Subsequently, the authors of \cite{liu2022add} addressed linear stochastic systems corrupted by additive generic noise, which is modeled by the ``differential'' of a continuous-time martingale and may contain both white Gaussian noise and random jumps of any size. 
The authors of \cite{liu2022add} pointed out that when the noise channel is different from the control channel, and with $Q(t) \equiv 0$, there also exists a unique optimal control, although the optimal control may not admit a closed-form solution.


Thus far, all noise models in the finite-horizon covariance steering literature consider only additive noise that is independent of the state and control input. 
Nevertheless, in numerous engineering applications the system is also prone to state-dependent noise, control-dependent noise, or general multiplicative noise \cite{volpe2016effective}. 
Examples of systems corrupted by multiplicative noise include 
neuro-physiological systems \cite{harris1998signal}, 
asset pricing models \cite{bormetti2014multiplicative}, 
signal processing processes \cite{loo1968statistical}, 
aerospace systems \cite{mclane1971optimal}, 
and electromechanical systems \cite{wu2019preview}. 
Motivated by these applications, this paper aims to extend the results of \cite{liu2022add} to the case of linear stochastic systems subject to multiplicative noise, and also having a more general quadratic cost functional, with $Q(t) \succeq 0$.


\textit{Contributions:} 
This paper claims the following contributions. 
Firstly, we present a candidate optimal control law for covariance steering with respect to a cost functional which is quadratic in both the state and the control variables. 
Secondly, the controllability of the state covariance over a finite time interval is analyzed for a linear stochastic system subject to both additive and state-dependent noise. 
Finally, the existence and uniqueness of the optimal control is confirmed. 
Interestingly, we are also able to give a simple characterization on the existence of the solution to a matrix Riccati differential equation when the linear time-varying system without noise is totally controllable.


The rest of the paper is organized as follows. 
The main optimal covariance steering problem addressed in this paper is formulated in Section \ref{sec:prob}. 
A candidate optimal control law is derived in Section \ref{sec:cond}. 
The controllability of the state covariance is established in Section \ref{sec:contr} for a linear stochastic system subject to both additive and state-dependent noise. 
The existence and uniqueness of the optimal control is shown in Section \ref{sec:soltn-nd}. 
A numerical example is provided in Section \ref{sec:expl} to demonstrate the results of this paper. 
To keep the discussion brief, we refer the reader to \cite[Section II]{liu2022add} for the technical background on the generic noise model along with some preliminaries on stochastic processes and It\^{o} calculus.


\section{Problem Formulation} \label{sec:prob}

Consider the linear time-varying stochastic system, which is corrupted by a combination of additive and multiplicative noise,
\begin{multline} \label{sde:mult-jump}
\dd x(t) = A(t) x(t) \, \dd t + B(t) u(t) \, \dd t + C(t) \, \dd m(t) 
\\* 
+ \bigg(\sum_{i = 1}^{\ell} E_{i}(t) \, \dd \mu_{i}(t)\bigg) x(t),
\end{multline}
satisfying the following initial condition,
\begin{equation} \label{bgn:mean-cov-Q}
\mathbb{E}\left[x(0)\right] = 0, \quad \mathbb{E}\big[x(0) x(0)\t \big] = \Sigma_{0} \succ 0, 
\end{equation}
where $x(t) \in \mathbb{R}^{n}$ is the state vector at time $t$, $u(t) \in \mathbb{R}^{p}$ is the control input at time $t$, 
$m(t) \in \mathbb{R}^{q}$ and $\mu_{i}(t) \in \mathbb{R}$, where $i \in \{1, 2, \dots, \ell\}$, are independent square integrable martingales independent of the state history $\{x(s): \, 0 \leq s \leq t\}$ and with $m(0) = 0$, $\dd\mathbb{E}\left[ m(t)m(t)\t \right]/\dd t = D(t) \succeq 0$, and $\mu_{i}(0) = 0$, $\dd\mathbb{E}\left[\mu_{i}^{2}(t)\right]/\dd t = 2\nu_{i}(t) \geq 0$, respectively, 
and $A(t) \in \mathbb{R}^{n \times n}$, $B(t) \in \mathbb{R}^{n \times p}$, $C(t) \in \mathbb{R}^{n \times q}$, and $E_{i}(t) \in \mathbb{R}^{n \times n}$ are known coefficient matrices. 
Let $\mathcal{C}^{k}$ denote the class of $k$-times continuously differentiable functions defined on the interval $[0, 1]$. 
We assume that $A(t) \in \mathcal{C}^{n-1}$, $B(t) \in \mathcal{C}^{n}$, and $C(t)$, $D(t)$, $E_{i}(t)$, $\nu_{i}(t) \in \mathcal{C}^{0}$. 
Without loss of generality, 
we assume that \eqref{sde:mult-jump} is defined on the interval $[0, 1]$, and that the desired terminal state $x(1)$ is characterized by its mean and covariance matrix given by 
\begin{equation} \label{tgt:mean-cov-Q}
\mathbb{E}\left[x(1)\right]=0,
\quad
\mathbb{E}\big[x(1) x(1)\t \big]=\Sigma_{1} \succ 0.
\end{equation}
A control input $u$ is said to be \emph{admissible} if, for each $t \in [0, 1]$, 
it depends only on $t$ and on the past history of the state $\{x(s): \, 0 \leq s \leq t\}$, and satisfies $J(u) < \infty$, where $J(u)$ is given by \eqref{cost-Q}, 
such that \eqref{sde:mult-jump} with the initial condition \eqref{bgn:mean-cov-Q} has a strong solution \cite{protter2003stochastic}, and the desired terminal state mean and covariance given by \eqref{tgt:mean-cov-Q} are reached. 
Let $\mathcal{U}$ denote the set of admissible controls. 
The problem is to check whether $\mathcal{U}$ is nonempty and, if so, to determine the optimal control $u^{*} \in \mathcal{U}$ that minimizes the quadratic cost functional \eqref{cost-Q} subject to the initial and terminal state constraints \eqref{bgn:mean-cov-Q}, \eqref{tgt:mean-cov-Q}.


\section{Optimal Control of the State Covariance} \label{sec:cond}

In this section, a candidate optimal control is developed for the covariance steering problem formulated in Section \ref{sec:prob}.

Let $\Pi(t)$, $t \in [0, 1]$, be a differentiable function taking values
in the set of $n \times n$ symmetric matrices. 
Because of the prescribed boundary conditions \eqref{bgn:mean-cov-Q} and \eqref{tgt:mean-cov-Q}, the expected values $\mathbb{E}\big[ x(0)\t \Pi(0) x(0)\big]$ and $\mathbb{E}\big[ x(1)\t \Pi(1) x(1) \big]$ are independent of the control $u \in \mathcal{U}$. 
Hence, it follows from a similar derivation as in \cite[Section V.A]{liu2022add} that the cost functional \eqref{cost-Q} can be equivalently written as
\begin{align} \label{cost:pre-sqr-Q}
&\tilde{J}(u) 
= \mathbb{E}\left[\int_{0}^{1} \Big(u(t)\t R(t) u(t) + x(t)\t Q(t) x(t)\Big) \, \dd t\right] 
\nonumber \\* 
&\hspace{14mm} 
+ \mathbb{E}\big[x(1)\t \Pi(1) x(1)\big] - \mathbb{E}\big[x(0)\t \Pi(0) x(0)\big] \nonumber \\
&= \mathbb{E}\left[\int_{0}^{1} \Big(u\t R u + u\t B\t \Pi x + x\t \Pi B u\Big) \, \dd t\right]
\nonumber \\* 
&\hspace{4mm} 
+ \mathbb{E} \! \left[\int_{0}^{1} \hspace{-1mm} x\t \bigg( \! \dot{\Pi} + A\t \Pi + \Pi A + Q + 2\sum_{i = 1}^{\ell} \nu_{i} E_{i}\t \Pi E_{i} \! \bigg) x \, \dd t\right]
\nonumber \\* 
&\hspace{4mm}
+ \int_{0}^{1} \trace \left(\Pi C D C\t \right) \dd t,
\end{align}
where we have dropped the function arguments for notational simplicity, and
we have applied the It\^{o} calculus formula (see Section II in \cite{liu2022add}).

To this end, let $\Pi(t)$ satisfy the Riccati differential equation
\begin{multline} \label{ode:pi-Q}
\dot{\Pi} = - A(t)\t \Pi - \Pi A(t) + \Pi B(t) R^{-1}(t) B(t)\t \Pi - Q(t) 
\\* 
- 2\sum_{i = 1}^{\ell} \nu_{i}(t) \, E_{i}(t)\t \Pi E_{i}(t).
\end{multline}
It follows that the cost functional in \eqref{cost:pre-sqr-Q} becomes 
\begin{multline} \label{cost:sqr-Q}
\tilde{J}(u) 
= \mathbb{E}\left[\int_{0}^{1} \Big\| R^{\frac{1}{2}}(t) u(t) + R^{-\frac{1}{2}}(t) B(t)\t \Pi(t) x(t) \Big\|^{2} \dd t\right] 
\\* 
+ \int_{0}^{1} \trace \left(\Pi C D C\t \right) \dd t.
\end{multline}
Since the second term in \eqref{cost:sqr-Q} is independent of the control $u$, a candidate optimal control takes the form
\begin{equation} \label{ctrl:opt-Q}
u^{*}(t) = - R^{-1}(t) B(t)\t \Pi(t) x(t). 
\end{equation}
The corresponding candidate optimal process is 
\begin{multline} \label{sde:mult-jump-opt} 
\dd x^{*} = \Big(A(t) - B(t) R^{-1}(t) B(t)\t \Pi(t)\Big) x^{*} \, \dd t + C(t) \, \dd m 
\\* 
+ \bigg(\sum_{i = 1}^{\ell} E_{i}(t) \, \dd \mu_{i}(t)\bigg) x^{*}.
\end{multline}
Since the initial condition is $\mathbb{E}\left[x(0)\right] = 0$ and system \eqref{sde:mult-jump} is subject to the state feedback control \eqref{ctrl:opt-Q}, it follows that $\mathbb{E}\left[x(t)\right] = 0$ for $t \in [0, 1]$.

Accordingly, let $\Sigma(t) = \mathbb{E}\big[x^{*}(t) x^{*}(t)\t \big]$ be the covariance of $x^{*}(t)$. 
Then, $\Sigma(t)$ satisfies the Lyapunov differential equation 
\begin{align} \label{ode:sigma-Q}
\dot{\Sigma} &= \Big(A(t) - B(t) R^{-1}(t) B(t)\t \Pi(t)\Big) \Sigma + C(t) D(t) C(t)\t 
\nonumber \\* 
&\hspace{5mm} + \Sigma \Big(A(t) - B(t) R^{-1}(t) B(t)\t \Pi(t)\Big)\t 
\nonumber \\* 
&\hspace{5mm} + 2\sum_{i = 1}^{\ell} \nu_{i}(t) E_{i}(t) \Sigma E_{i}(t)\t,
\end{align}
with boundary conditions
\begin{equation} \label{bdr:sigma-Q}
\Sigma(0) = \Sigma_{0} \succ 0, \qquad \Sigma(1) = \Sigma_{1} \succ 0.
\end{equation}
Thus, we have obtained the result below.

\begin{theorem} \label{thm:suff-Q}
Assume $\Pi(t)$ and $\Sigma(t)$ satisfy equations \eqref{ode:pi-Q}, \eqref{ode:sigma-Q}, \eqref{bdr:sigma-Q} for $t \in [0, 1]$. 
Then, the state feedback control $u^{*}$ given by \eqref{ctrl:opt-Q} is optimal for system \eqref{sde:mult-jump} with respect to the cost functional \eqref{cost-Q}, subject to the boundary constraints \eqref{bgn:mean-cov-Q}, \eqref{tgt:mean-cov-Q}. 
The corresponding optimal process is given by \eqref{sde:mult-jump-opt}.
\end{theorem}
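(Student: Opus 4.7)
The plan is to rigorously justify the completing-the-squares argument already sketched in Section \ref{sec:cond}, and then separately verify that the candidate control law is admissible. I would begin by fixing an arbitrary $u \in \mathcal{U}$ with corresponding state trajectory $x$ solving \eqref{sde:mult-jump}--\eqref{bgn:mean-cov-Q} and satisfying \eqref{tgt:mean-cov-Q}. Because all admissible controls share the same initial and terminal first and second moments, both $\mathbb{E}[x(0)\t \Pi(0) x(0)]$ and $\mathbb{E}[x(1)\t \Pi(1) x(1)]$ are constants independent of $u$; hence minimizing $J(u)$ is equivalent to minimizing $\tilde{J}(u)$ as defined in \eqref{cost:pre-sqr-Q}. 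To pass from the first equality in \eqref{cost:pre-sqr-Q} to the second, I would apply It\^o's formula to the scalar process $x(t)\t \Pi(t) x(t)$, using the dynamics \eqref{sde:mult-jump} together with the quadratic variation contributions from $m$ (yielding the $\trace(\Pi C D C\t)$ term) and from each $\mu_i$ (yielding the $2\nu_i E_i\t \Pi E_i$ term), and take expectations; admissibility together with square integrability of $m$ and $\mu_i$ ensures the local-martingale parts integrate to zero.

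Next, substituting the Riccati equation \eqref{ode:pi-Q} into \eqref{cost:pre-sqr-Q} cancels the $x\t(\cdot)x$ coefficient against $-\Pi B R^{-1} B\t \Pi$, and the remaining linear-in-$u$ and quadratic-in-$u$ terms combine into the perfect square $\|R^{1/2} u + R^{-1/2} B\t \Pi x\|^{2}$, giving \eqref{cost:sqr-Q}. Since $R(t) \succ 0$ and the trace term does not involve $u$, the integrand is pointwise (in $t$ and $\omega$) nonnegative and vanishes exactly when $u(t) = -R^{-1}(t) B(t)\t \Pi(t) x(t)$; this establishes that $u^{*}$ in \eqref{ctrl:opt-Q} attains the infimum of $\tilde{J}$ over all processes that obey the same boundary covariance constraints.

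The remaining step is to confirm $u^{*} \in \mathcal{U}$. Under the hypothesized regularity of $A$, $B$, $R$, and the existence of $\Pi$ on $[0,1]$, the closed-loop system \eqref{sde:mult-jump-opt} has bounded continuous coefficients and therefore admits a unique strong solution $x^{*}$. Linearity of the dynamics together with $\mathbb{E}[x^{*}(0)] = 0$ yields $\mathbb{E}[x^{*}(t)] = 0$ on $[0,1]$. The assumption that the \emph{same} $\Sigma(t)$ satisfying \eqref{ode:sigma-Q}--\eqref{bdr:sigma-Q} exists means in particular that the second-moment matrix generated by $x^{*}$ coincides with $\Sigma(t)$ and therefore matches $\Sigma_{1}$ at $t=1$; continuity of $\Sigma$ on the compact interval $[0,1]$ also gives $J(u^{*}) < \infty$. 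Thus $u^{*}$ is admissible and, by the completed-square inequality, optimal.

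The conceptual obstacle is not in any single computation but in the interplay between the two coupled boundary-value problems: the Riccati equation \eqref{ode:pi-Q} carries no prescribed terminal condition, while the Lyapunov equation \eqref{ode:sigma-Q} must hit both endpoints. Theorem \ref{thm:suff-Q} sidesteps this by \emph{assuming} existence of the pair $(\Pi, \Sigma)$; my proof plan therefore only needs the verification sketched above, whereas the deeper difficulty of producing such a pair is deferred to the controllability and existence analyses in Sections \ref{sec:contr} and \ref{sec:soltn-nd}.
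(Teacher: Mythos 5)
Your proposal follows essentially the same route as the paper: the paper's proof of Theorem~\ref{thm:suff-Q} is precisely the derivation preceding it in Section~\ref{sec:cond} (It\^{o}'s formula applied to $x\t\Pi x$, substitution of the Riccati equation \eqref{ode:pi-Q} to complete the square in \eqref{cost:sqr-Q}, and the observation that the trace term is control-independent), and your added verification that $u^{*}$ is admissible under the assumed existence of $(\Pi,\Sigma)$ satisfying \eqref{ode:sigma-Q}, \eqref{bdr:sigma-Q} is exactly the role those hypotheses play in the paper. The argument is correct as proposed.
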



In view of \eqref{cost:sqr-Q}, if there exists a unique solution to the coupled matrix ordinary differential equations (ODEs) \eqref{ode:pi-Q}, \eqref{ode:sigma-Q}, \eqref{bdr:sigma-Q}, then, the optimal control $u^{*}$ given by \eqref{ctrl:opt-Q} is unique.

For the time being, it is difficult to analyze these coupled matrix ODEs. 
Thus, for the rest of the paper, we assume the simpler case of state-dependent noise by letting $E_{i}(t) \equiv I_{n}$ for all $i \in \{1, 2, \dots, \ell\}$. 
Then, the linear stochastic system \eqref{sde:mult-jump} becomes
\begin{equation} \label{sde:mult-jump-simp}
\dd x(t) = A(t) x(t) \, \dd t + B(t) u(t) \, \dd t + C(t) \, \dd m(t) + x(t) \, \dd \mu(t),
\end{equation}
where, 
\begin{equation*}
\mu(t) \triangleq \sum_{i = 1}^{\ell} \mu_{i}(t), ~~ 
\dd\mathbb{E}\left[\mu^{2}(t)\right]/\dd t = 2 \sum_{i = 1}^{\ell} \nu_{i}(t) \triangleq 2\nu(t) \geq 0.
\end{equation*}
Accordingly, the coupled matrix ODEs \eqref{ode:pi-Q} and \eqref{ode:sigma-Q} become
\begin{align}
\dot{\Pi} &= - A(t)\t \Pi - \Pi A(t) + \Pi B(t) R^{-1}(t) B(t)\t \Pi - Q(t) 
\nonumber \\* 
&\hspace{60mm} - 2\nu(t) \Pi, \label{ode:pi-Q-simp} \\
\dot{\Sigma} &= \Big(A(t) - B(t) R^{-1}(t) B(t)\t \Pi(t)\Big) \Sigma + C(t) D(t) C(t)\t
\nonumber \\* 
&\hspace{5mm} + \Sigma \Big(A(t) - B(t) R^{-1}(t) B(t)\t \Pi(t)\Big)\t + 2\nu(t) \Sigma. \label{ode:sigma-Q-simp}
\end{align}

It will be shown in Section~\ref{sec:soltn-nd} that the simplified coupled matrix ODEs
\eqref{bdr:sigma-Q}, \eqref{ode:pi-Q-simp}, \eqref{ode:sigma-Q-simp},  have a unique solution. 
Thus, the optimal control for system \eqref{sde:mult-jump-simp} is unique with respect to the cost \eqref{cost-Q} and is given by \eqref{ctrl:opt-Q}.


\section{Controllability of the State Covariance} \label{sec:contr}

In this section, we show that $\mathcal{U}$ is nonempty under some mild conditions. 
The state covariance of \eqref{sde:mult-jump-simp}, written explicitly as
\begin{equation*}
\Sigma(t) \triangleq \mathbb{E}\left[\Big(x(t)-\mathbb{E}[x(t)]\Big)\Big(x(t)-\mathbb{E}[x(t)]\Big)\t \right]
\end{equation*}
is said to be \emph{controllable} on the time interval $[0, 1]$ if, for any given $\Sigma_{0}, \Sigma_{1} \succ 0$, there exists an admissible control $u \in \mathcal{U}$ that steers the state covariance $\Sigma(t)$ from $\Sigma(0) = \Sigma_{0}$ to $\Sigma(1) = \Sigma_{1}$, while maintaining $\Sigma(t) \succ 0$ for all $t \in [0, 1]$.

Consider the state feedback control of the form 
\begin{equation} \label{eqn:stt-fb}
u(t) = K(t)x(t), \quad t \in [0, 1], 
\end{equation}
where $K(t)$ is bounded on the interval $[0, 1]$. 
In light of $\mathbb{E}\left[x(0)\right] = 0$ and \eqref{eqn:stt-fb}, it is clear that $\mathbb{E}\left[x(t)\right] = 0$ for all $t \in [0, 1]$. 
It follows using a similar derivation as in \cite{liu2022add}, that with the control \eqref{eqn:stt-fb} the state covariance $\Sigma(t) = \mathbb{E}\big[x(t) x(t)\t \big]$ satisfies
\begin{multline} \label{ode:sigma-stt-fb-Q}
\dot{\Sigma} = \Big(A(t) + B(t) K(t)\Big)\Sigma + \Sigma\Big(A(t) + B(t) K(t)\Big)\t 
\\* 
+ C(t) D(t) C(t)\t + 2\nu(t) \Sigma.
\end{multline}


\begin{lemma} \label{lem:sigma-pos}
Let $\Sigma_{0} \succ 0$ and $K(t)$ be given. 
Then, $\Sigma(t) \succ 0$ for all $t \in [0, 1]$, where $\Sigma(t)$ satisfies \eqref{ode:sigma-stt-fb-Q} with the initial condition $\Sigma(0) = \Sigma_{0}$.
\end{lemma}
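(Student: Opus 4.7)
The plan is to reduce \eqref{ode:sigma-stt-fb-Q} to a standard Lyapunov differential equation by absorbing the $2\nu(t)\Sigma$ term into the drift, and then read off positive definiteness from the explicit variation-of-constants formula. Concretely, define
\begin{equation*}
\tilde{A}(t) \triangleq A(t) + B(t)K(t) + \nu(t) I_{n},
\end{equation*}
so that $\tilde{A}(t)\Sigma + \Sigma \tilde{A}(t)\t = (A+BK)\Sigma + \Sigma(A+BK)\t + 2\nu\Sigma$. Equation \eqref{ode:sigma-stt-fb-Q} then reads
\begin{equation*}
\dot{\Sigma} = \tilde{A}(t)\Sigma + \Sigma \tilde{A}(t)\t + C(t)D(t)C(t)\t.
\end{equation*}

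Next, I would introduce the state transition matrix $\Phi(t,s)$ associated with $\tilde{A}(t)$, i.e.\ the solution of $\partial_{t}\Phi(t,s) = \tilde{A}(t)\Phi(t,s)$ with $\Phi(s,s) = I_{n}$. Since $A$, $B$, $K$, and $\nu$ are bounded on $[0,1]$, the matrix $\tilde{A}(t)$ is bounded, so $\Phi(t,s)$ exists on $[0,1]\times[0,1]$ and is invertible (its determinant is $\exp(\int_{s}^{t}\trace \tilde{A}(\tau)\,\dd\tau)\neq 0$). Standard theory for the Lyapunov equation then yields the explicit representation
\begin{equation*}
\Sigma(t) = \Phi(t,0)\,\Sigma_{0}\,\Phi(t,0)\t + \int_{0}^{t} \Phi(t,s)\,C(s)D(s)C(s)\t\,\Phi(t,s)\t\,\dd s.
\end{equation*}

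From this formula, positivity is immediate: the first term is $\Phi(t,0)\Sigma_{0}\Phi(t,0)\t \succ 0$ because $\Sigma_{0}\succ 0$ and $\Phi(t,0)$ is nonsingular, while the integrand is positive semidefinite since $D(s)\succeq 0$ implies $\Phi(t,s)CDC\t\Phi(t,s)\t\succeq 0$. Hence $\Sigma(t)\succ 0$ for every $t\in[0,1]$.

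There is no real obstacle here; the only subtlety worth a sentence in the write-up is the observation that the multiplicative-noise term $2\nu(t)\Sigma$ has precisely the structure $(\nu I)\Sigma + \Sigma(\nu I)\t$, which is what allows it to be folded cleanly into the drift $\tilde{A}(t)$ rather than into the inhomogeneous term. Once this substitution is in place, the argument is the classical positivity proof for Lyapunov equations driven by a positive semidefinite forcing.
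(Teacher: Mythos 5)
Your proposal is correct and follows essentially the same route as the paper: both absorb the $2\nu(t)\Sigma$ term into the drift via the state transition matrix of $A(t)+B(t)K(t)+\nu(t)I_{n}$, write $\Sigma(t)$ through the variation-of-constants formula, and conclude positive definiteness from the nonsingularity of the transition matrix together with $\Sigma_{0}\succ 0$ and the positive semidefinite forcing term.
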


\begin{proof} 
Let $\Phi_{K}(t, \tau)$ denote the state transition matrix of $A(t) + B(t) K(t) + \nu(t) I_{n}$. Then,
\begin{multline*}
\Sigma(t) = \Phi_{K}(t, 0) \Sigma_{0} \Phi_{K}(t, 0)\t 
\\* 
+ \int_{0}^{t} \Phi_{K}(t, \tau) C(\tau) D(\tau) C(\tau)\t \Phi_{K}(t, \tau)\t \, \dd \tau.
\end{multline*}
Since $\Phi_{K}(t, 0)$ is nonsingular, we have 
\begin{equation*}
\Phi_{K}(t, 0) \Sigma_{0} \Phi_{K}(t, 0)\t \succ 0. 
\end{equation*}
Thus, $\Sigma(t) \succ 0$ for all $t \in [0, 1]$. \hfill
\end{proof}


\subsection{Time-Invariant Case}

First, we assume that the matrix pair $(A, B)$ is time-invariant and controllable. 
For simplicity, let $C(t) D(t) C(t)\t = M(t) \succeq 0$.

\begin{theorem} \label{thm:contr-AB}
Let $(A, B)$ be controllable, and let $\Sigma_{0}, \Sigma_{1} \succ 0$. 
Let $M, \nu \in \mathcal{C}^{n-2}$ be such that $M(t) \succeq 0$ and $\nu(t) \geq 0$ on $[0, 1]$. 
Then, 
there exists $K \in \mathcal{C}^{0}$ such that the solution of the matrix differential equation
\begin{equation} \label{ode:sigma-ABK}
\dot{\Sigma} = \Big(A + B K(t)\Big) \Sigma + \Sigma \Big(A + B K(t)\Big)\t + M(t) + 2\nu(t) \Sigma, 
\end{equation}
satisfies $\Sigma(t) \succ 0$ on $[0, 1]$ with boundary conditions $\Sigma(0) = \Sigma_{0}$ and $\Sigma(1) = \Sigma_{1}$.
\end{theorem}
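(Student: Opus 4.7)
The plan is to reduce this multiplicative-noise covariance controllability problem to the additive-noise setting already treated in \cite{liu2022add}, using a scalar integrating factor to absorb the $2\nu(t)\Sigma$ term. Since that term acts isotropically on $\Sigma$ and involves no coupling with the control, it should be removable by a time-dependent rescaling of the covariance, after which the resulting equation falls squarely within the framework of the earlier paper.

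Concretely, I would introduce
\[
\phi(t) \triangleq \exp\!\bigg(\int_{0}^{t} \nu(s)\,\dd s\bigg), \qquad \tilde{\Sigma}(t) \triangleq \Sigma(t)/\phi(t)^{2}.
\]
Differentiating and substituting \eqref{ode:sigma-ABK} gives
\[
\dot{\tilde{\Sigma}} = (A + BK(t))\tilde{\Sigma} + \tilde{\Sigma}(A + BK(t))\t + \tilde{M}(t),
\]
with $\tilde{M}(t) \triangleq M(t)/\phi(t)^{2}$. The boundary conditions become $\tilde{\Sigma}(0) = \Sigma_{0}$ and $\tilde{\Sigma}(1) = \Sigma_{1}/\phi(1)^{2}$, both positive definite, and because $\phi(t) > 0$ on $[0, 1]$, positivity of $\tilde{\Sigma}$ is equivalent to positivity of $\Sigma$. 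The smoothness hypotheses $M, \nu \in \mathcal{C}^{n-2}$ propagate to $\tilde{M} \in \mathcal{C}^{n-2}$, since $\dot{\phi} = \nu \phi$ yields $\phi \in \mathcal{C}^{n-1}$ and $\phi$ is strictly positive.

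At this point I would invoke the time-invariant covariance controllability result of \cite{liu2022add} for the additive-noise Lyapunov equation under controllable $(A, B)$ to produce a $K \in \mathcal{C}^{0}$ steering $\tilde{\Sigma}$ between the prescribed endpoints. Positive definiteness along the trajectory, required by the theorem statement, follows directly from Lemma \ref{lem:sigma-pos} applied to the rescaled equation (equivalently, to the original one), since that lemma makes no use of the specific form of the drift and holds for any bounded continuous feedback.

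The main obstacle I anticipate is alignment with the exact form of the prior result: the additive-noise theorem in \cite{liu2022add} may be stated under slightly different smoothness assumptions on the noise intensity or with a specific constructive scheme for $K$. If direct invocation is not possible, the fallback is to construct $K$ explicitly by first choosing a smooth matrix-valued interpolant $\Sigma(t)$ between $\Sigma_{0}$ and $\Sigma_{1}$ staying in the positive definite cone, and then solving the algebraic identity
\[
BK(t)\Sigma + \Sigma K(t)\t B\t = \dot{\Sigma} - A\Sigma - \Sigma A\t - M(t) - 2\nu(t)\Sigma
\]
for $K(t)$. The nontrivial part is guaranteeing solvability and continuity of $K$ over the \emph{entire} interval; this is where the Kalman rank condition on $(A, B)$ and the $\mathcal{C}^{n-2}$ smoothness enter, and it is the crux of the argument — the rescaling step is bookkeeping by comparison.
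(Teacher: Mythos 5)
Your main route is sound and is genuinely different from the paper's. You eliminate the multiplicative-noise term by the exponential rescaling $\tilde{\Sigma}(t)=\Sigma(t)/\phi(t)^{2}$ with $\phi(t)=\exp\big(\int_{0}^{t}\nu\big)$, which correctly turns \eqref{ode:sigma-ABK} into the purely additive Lyapunov equation with intensity $\tilde{M}=M/\phi^{2}\succeq 0$ and positive-definite endpoint $\Sigma_{1}/\phi(1)^{2}$, and then you appeal to the additive-noise covariance controllability theorem of \cite{liu2022add}; the smoothness bookkeeping ($\phi\in\mathcal{C}^{n-1}$, $\phi>0$, hence $\tilde{M}\in\mathcal{C}^{n-2}$) and the equivalence of positivity under the rescaling are both correct, and positivity along the trajectory is indeed covered by Lemma \ref{lem:sigma-pos} (writing $\tilde M = \tilde M^{1/2}\tilde M^{1/2}$). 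The paper does not take this shortcut: it reproves everything from scratch via Proposition \ref{prp:U}, i.e.\ it sets $U=\Sigma K\t$ to linearize the equation, reduces $(A,B)$ to the canonical pair $(A_{n},B_{n})$, and runs a layer-by-layer induction on the dimension (with Lemma \ref{lem:u-bd} handling the scalar corner entries and Sylvester's criterion giving positivity), carrying the $2\nu\Sigma$ term through as an integrating factor inside the construction. What your reduction buys is brevity and reuse of the earlier result; what the paper's construction buys is self-containment and the stronger Proposition \ref{prp:U} with prescribed endpoint derivatives of $U$ up to order $H$, which is needed internally for the induction and does not follow from your rescaling argument. So your proof stands or falls with the exact statement proved in \cite{liu2022add} (general $M(t)\succeq 0$, comparable smoothness), a dependence you correctly flag.

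Your fallback, however, is not viable as stated. For $p<n$ the map $K\mapsto BK\Sigma+\Sigma K\t B\t$ is not surjective onto the symmetric matrices, so for a generic positive-definite interpolant $\Sigma(t)$ the identity $BK\Sigma+\Sigma K\t B\t=\dot{\Sigma}-A\Sigma-\Sigma A\t-M-2\nu\Sigma$ has no solution $K(t)$ at all; the interpolant must be chosen compatibly with the range of this map, and arranging that compatibility while keeping $\Sigma(t)\succ 0$ and hitting both endpoints is precisely the content of the paper's canonical-form, layer-by-layer construction. In other words, the step you describe as the ``crux'' is not merely a solvability-and-continuity check; it is where the entire difficulty lives, and the controllability of $(A,B)$ enters only through a structured choice of $\Sigma(t)$, not through pointwise inversion of the above identity.
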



In view of Lemma~\ref{lem:sigma-pos}, and since $\Sigma(t) \succ 0$, if we define $U(t) = \Sigma(t) K(t)\t$ equation \eqref{ode:sigma-ABK} is a linear function in terms of $U(t)$ (see \eqref{ode:sigma-ABU} below) and then we can recover $K(t)$ from $U(t)$ by letting $K(t) = U(t)\t \Sigma^{-1}(t)$. 
Thus, Theorem \ref{thm:contr-AB} is a direct consequence of the following result whose proof can be found in the Appendix.

\begin{proposition} \label{prp:U}
Let $(A, B)$ be controllable, 
let $H \geq 0$ be a given integer, 
and let $\Sigma_{0}, \Sigma_{1} \succ 0$. 
Let also $M, \nu \in \mathcal{C}^{H + n - 1}$ be such that $M(t) \succeq 0$ and $\nu(t) \geq 0$ on $[0, 1]$, 
and let $U_{i}^{0}, U_{i}^{1} \in \mathbb{R}^{n \times p}$, where $i \in \{0, 1, \dots, H\}$. 
There exists $\mathbb{R}^{n \times p}$-valued $U \in \mathcal{C}^{H}$ with boundary conditions $U(0) = U_{0}^{0}$, $U(1) = U_{0}^{1}$, $\frac{\dd^{i} U}{\dd t^{i}}(0) = U_{i}^{0}$, and $\frac{\dd^{i} U}{\dd t^{i}}(1) = U_{i}^{1}$, where $i \in \{1, \dots, H\}$, 
such that the solution of the matrix differential equation
\begin{equation} \label{ode:sigma-ABU}
\dot{\Sigma} = A \Sigma + \Sigma A\t + B U(t)\t + U(t) B\t + M(t) + 2\nu(t) \Sigma
\end{equation}
satisfies $\Sigma(t) \succ 0$ on $[0, 1]$ with boundary conditions $\Sigma(0) = \Sigma_{0}$ and $\Sigma(1) = \Sigma_{1}$.
\end{proposition}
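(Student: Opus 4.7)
My plan is to reduce the matrix ODE \eqref{ode:sigma-ABU} to a single symmetric linear integral equation in $U$, solve that equation by exploiting controllability of $(A,B)$, and enforce positive definiteness of $\Sigma$ via a state-feedback reparametrization. Since $\nu(t) I_n$ commutes with $A$, the state transition matrix of $A + \nu(t) I_n$ has the closed form $\Phi(t,s) = e^{A(t-s)}\exp\!\bigl(\int_s^t \nu(\tau)\,\dd\tau\bigr)$, which is $\mathcal{C}^{H+n}$ under the hypothesis $\nu \in \mathcal{C}^{H+n-1}$. Setting $\hat\Sigma(t) = \Phi(0,t)\Sigma(t)\Phi(0,t)\t$, $\hat U(t) = \Phi(0,t) U(t)$, $\hat B(t) = \Phi(0,t) B$, and $\hat M(t) = \Phi(0,t) M(t)\Phi(0,t)\t$ reduces \eqref{ode:sigma-ABU} to $\dot{\hat\Sigma} = \hat B\hat U\t + \hat U\hat B\t + \hat M$, so the endpoint condition $\Sigma(1) = \Sigma_1$ becomes the single symmetric linear matrix equation
\begin{equation*}
\int_0^1 \bigl(\hat B(t)\hat U(t)\t + \hat U(t)\hat B(t)\t\bigr)\,\dd t = G,
\end{equation*}
where $G := \Phi(0,1)\Sigma_1\Phi(0,1)\t - \Sigma_0 - \int_0^1 \hat M(t)\,\dd t$. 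The prescribed boundary jets of $U$ at $\{0,1\}$ translate to jets of $\hat U$ by the Leibniz rule, and the $n-1$ derivative loss across repeated applications of $\Phi$ and its inverse is what forces the hypothesis $\nu, M \in \mathcal{C}^{H+n-1}$.

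Next I would satisfy the boundary and integral conditions in two stages. In stage (a), I produce some $\hat U_{\mathrm{pre}} \in \mathcal{C}^\infty$ matching every transformed boundary derivative by a Hermite interpolant of degree $2H+1$ applied entrywise, and denote by $G_0$ the residual of the integral constraint after substitution. In stage (b), I pick a smooth bump $\phi$ compactly supported in $(0,1)$ with $\phi \geq 0$ and $\phi \not\equiv 0$. Controllability of $(A,B)$ together with invertibility of $\Phi$ guarantees that the weighted Gramian $W_\phi := \int_0^1 \phi(t)\,\hat B(t)\hat B(t)\t\,\dd t$ is strictly positive definite, so the explicit correction $\hat V(t) := \tfrac{1}{2}\phi(t)\,G_0 W_\phi^{-1}\hat B(t)$ has all derivatives vanishing at $\{0,1\}$ and, by a short calculation using symmetry of $G_0$ and $W_\phi$, satisfies $\int_0^1(\hat B\hat V\t + \hat V\hat B\t)\,\dd t = G_0$. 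Pulling back, $U := \Phi(\cdot,0)(\hat U_{\mathrm{pre}} + \hat V)$ lies in $\mathcal{C}^H$, meets every prescribed boundary condition, and yields $\Sigma(1) = \Sigma_1$.

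The main obstacle is ensuring $\Sigma(t)\succ 0$ on the whole interval, since the construction so far pins only the endpoint value. My plan is to recast the search via a feedback $K \in \mathcal{C}^H([0,1],\mathbb{R}^{p\times n})$ with $U(t) := \Sigma(t) K(t)\t$, which turns \eqref{ode:sigma-ABU} into the closed-loop Lyapunov equation of Lemma~\ref{lem:sigma-pos} and thereby forces $\Sigma(t)\succ 0$ automatically. The prescribed jets of $U$ at $\{0,1\}$ translate bijectively to jets of $K$ by differentiating $U = \Sigma K\t$ and substituting the Lyapunov ODE evaluated at $\Sigma_0$ and $\Sigma_1$ respectively, and the Hermite-plus-bump construction of the previous paragraph applies once one verifies that the linearization of $K \mapsto \Sigma(1)$ about a reference $K_{\mathrm{pre}}$ is still governed by a positive definite Gramian of the form $W_\phi$, hence remains surjective. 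A continuation argument along a positive definite segment joining $\Sigma^{K_{\mathrm{pre}}}(1)$ to $\Sigma_1$, combined with the implicit function theorem, then upgrades local surjectivity to the required global reachability of $\Sigma_1$, and setting $U = \Sigma K\t$ delivers the desired map.
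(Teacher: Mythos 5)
Your first two stages are fine as far as they go: the flow transformation by the state transition matrix of $A+\nu(t)I_{n}$ does linearize the endpoint constraint, and the Hermite-interpolant-plus-weighted-Gramian correction (using controllability of $(A,B)$, hence $W_{\phi}\succ0$) does produce a $U\in\mathcal{C}^{H}$ matching all prescribed jets with $\Sigma(0)=\Sigma_{0}$ and $\Sigma(1)=\Sigma_{1}$. But this only solves the problem \emph{without} the constraint $\Sigma(t)\succ0$ on all of $[0,1]$, which is the actual content of the proposition (and the reason the paper's hypotheses and proof are as elaborate as they are). Everything therefore hinges on your third stage, and that stage contains a genuine gap. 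Once you reparametrize by $U=\Sigma K\t$ so that Lemma~\ref{lem:sigma-pos} guarantees positivity, the endpoint map $K\mapsto\Sigma^{K}(1)$ is no longer affine, and your one-shot correction no longer applies; you replace it by ``surjective linearization $+$ implicit function theorem $+$ continuation along a path in the positive definite cone.'' Local surjectivity (even granting the Gramian-type surjectivity of the linearization, which you only assert) makes the image of the map \emph{open} in the cone of positive definite matrices; to conclude it is the whole (connected) cone you also need it to be \emph{closed}, i.e.\ a properness/compactness statement: if $\Sigma^{K_{j}}(1)\to\Sigma_{*}\succ0$, you must extract a limiting admissible feedback. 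Nothing in your argument rules out the lifted path $K_{\lambda}$ blowing up in $\mathcal{C}^{H}$ as $\lambda$ approaches some critical value (e.g.\ with $\Sigma^{K_{\lambda}}(t)$ degenerating at an interior time), and since the domain is an infinite-dimensional function space there is no compactness to fall back on. As written, the continuation step is an assertion, not a proof, and it is exactly the step that carries the interior-positivity requirement.

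For comparison, the paper avoids this global nonlinear analysis entirely: it reduces $(A,B)$ to the single-chain canonical form $(A_{n},B_{n})$ and argues by induction on $n$, peeling off the last row/column of $\Sigma$. The outer layer is driven directly by $U$, the inner block is driven by the off-diagonal column, and interior positivity is enforced constructively through the scalar Lemma~\ref{lem:u-bd}, which lets one prescribe $\int_{0}^{1}fu$ exactly while keeping $\int_{0}^{t}fu$ above a running threshold, so that the Schur-complement condition $\Sigma_{\star}(t)>\Sigma_{\dagger}(t)\t\Sigma_{\Box}^{-1}(t)\Sigma_{\dagger}(t)$ holds pointwise; this is also where the $\mathcal{C}^{H+n-1}$ regularity of $M,\nu$ is consumed (one derivative per layer), a hypothesis your construction never actually uses. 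If you want to salvage your route, you would need either a properness argument for $K\mapsto\Sigma^{K}(1)$ (so that open-plus-closed gives surjectivity), or a quantitative reachability argument showing every $\Sigma_{1}\succ0$ is attainable with a uniformly bounded family of feedbacks; without one of these, the proposal does not prove the proposition.
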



With $(A, B)$ in the canonical form $(A_{n}, B_{n})$ (see \eqref{eqn:AB-par} in the proof of Proposition \ref{prp:U}), we can partition the matrix $\Sigma$ into $n$ layers labeled $L_{1}$, $L_{2}$, $\dots$, $L_{n}$, respectively, as in Figure~\ref{fig:par}, where the first layer $L_{1}$ is just the upper left entry of $\Sigma$ and the $n$th layer $L_{n}$ is the outmost layer consisting of the last column and last row of $\Sigma$. In each layer $k \in \{1, 2, \dots, n\}$, let $L_{k}^{\dagger}$ denote the first $k-1$ rows of $L_{k}$ and let $L_{k}^{\star}$ denote the lower right ``corner'' entry of $L_{k}$. 
From \eqref{eqn:Sig-M-U-par}-\eqref{ode:Sig-lr} in the proof of Proposition~\ref{prp:U}, we can conclude by induction from the outer layers to the inner layers that $L_{k}$ is controlled directly by $L_{k+1}^{\dagger}$ for $k \in \{1, 2, \dots, n-1\}$ and $L_{n}$ is controlled directly by the control input $U$.
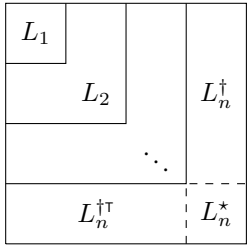
\begin{figure}[H] 
\begin{center}
\begin{tikzpicture}
\draw (0,2.4) -- (0.8,2.4) -- (0.8,3.2);
\node[rectangle] at (0.4,2.8) {$L_{1}$};
\draw (0,1.6) -- (1.6,1.6) -- (1.6,3.2);
\node[rectangle] at (1.2,2) {$L_{2}$};
\draw (0,0.8) -- (2.4,0.8) -- (2.4,3.2);
\node[rectangle] at (2,1.2) {$\ddots$};
\draw (0,0) -- (3.2,0) -- (3.2,3.2) -- (0,3.2) -- (0,0);
\draw[dashed] (2.4,0) -- (2.4,0.8) -- (3.2,0.8);
\node[rectangle] at (2.8,0.4) {$L_{n}^{\star}$};
\node[rectangle] at (2.8,2) {$L_{n}^{\dagger}$};
\node[rectangle] at (1.2,0.4) {$L_{n}^{\dagger \mbox{\tiny\sf T}}$};
\end{tikzpicture}
\end{center}
\caption{Partition of $\Sigma$ into $n$ layers.}
\label{fig:par}
\end{figure}

Finding $U$ involves two steps. 
The first step is to propagate the boundary conditions of $\Sigma$ from the outer layers to the inner layers, and within each layer from the bottom entries to the top entries. 
For example, if the control $U$ needs to satisfy the boundary conditions up to $H$th order derivative, each layer $k \in \{2, \dots, n\}$ will have to satisfy the boundary conditions up to $(H+n+1-k)$th order derivative. 
Within each layer $k$, the boundary conditions for $L_{k}^{\star}$ will be determined first, then followed by the boundary conditions for $L_{k}^{\dagger}$. 
The second step is to propagate the entries of $\Sigma(t)$ for $t \in [0, 1]$ from the inner layers to the outer layers and finally to $U(t)$ on $[0, 1]$, and within each layer from the top entries to the bottom entries. 
Taking layer $k \in \{3, \dots, n\}$ as an example, let $L_{k, i}$ denote the $i$th entry of $L_{k}$ from the top, where $i \in \{1, \dots, k\}$. 
Then, for each $j \in \{1, \dots, k-2\}$, $L_{k, j}(t)$ is determined by $L_{k-1, j}(t)$ and $L_{k-1, j+1}(t)$. 
Next, $L_{k, k-1}(t)$ is determined solely by $L_{k-1}^{\star}(t)$. 
Lastly, the corner entry $L_{k}^{\star}(t)$ is determined by $L_{1}(t)$, $\dots$, $L_{k-1}(t)$, and $L_{k}^{\dagger}(t)$. 
The boundary conditions of $U(t)$ are guaranteed by the first step.


\subsection{Time-Varying System}

Next, we assume that the matrix pair $\big(A(t), B(t)\big)$ is time-varying. 
Define 
\begin{align} \label{def:theta}
\Theta_{i}(t) &\triangleq 
\begin{bmatrix}
\Gamma_{0}(t) & \Gamma_{1}(t) & \cdots & \Gamma_{i-1}(t)
\end{bmatrix}
, ~ 1 \leq i \leq n+1, \\
\Gamma_{0}(t) &\triangleq B(t)
, \nonumber \\ 
\Gamma_{k}(t) &\triangleq -A(t)\Gamma_{k-1}(t) + \dot{\Gamma}_{k-1}(t)
, \quad 1 \leq k \leq n. \nonumber
\end{align}
The \emph{controllability matrix} of $\big(A(t), B(t)\big)$ is $\Theta_{n}(t)$ \cite{silverman1967controllability}. 
The pair $\big(A(t), B(t)\big)$ is \emph{totally controllable} on the time interval $[0, 1]$ if, for all $0 \leq t_{0} < t_{1} \leq 1$, there exists $t \in (t_{0}, t_{1})$ such that $\rank \Theta_{n}(t) = n$ \cite{stubberud1964controllability}. 
The pair $\big(A(t), B(t)\big)$ is \emph{uniformly controllable} on the time interval $[0, 1]$ if, for all $t \in [0, 1]$, $\rank \Theta_{n}(t) = n$ \cite{silverman1967controllability}. 
It follows immediately from these definitions that uniform controllability is stronger than total controllability. 
The pair $\big(A(t), B(t)\big)$ is \emph{index invariant} on the interval $[0, 1]$ if, for each $i \in \{1, 2, \dots, n+1\}$, $\rank \Theta_{i}(t)$ is constant for $t \in [0, 1]$, and $\rank \Theta_{n}(t) = \rank \Theta_{n+1}(t)$ \cite{morse1973structure}.


Now, assume that $\big(A(t), B(t)\big)$ is uniformly controllable and index invariant on $[0, 1]$. 
Following the same argument as in \cite{liu2022add}, we are able to reduce $\big(A(t), B(t)\big)$ to a time-invariant matrix pair via a time-varying coordinate transformation and a state feedback control. 
In light of Theorem \ref{thm:contr-AB}, we have reached the following result.
\begin{theorem} \label{thm:contr-Q}
Assume the matrix pair $\big(A(t), B(t)\big)$ is uniformly controllable and index invariant on the time interval $[0, 1]$. 
Then, the state covariance of the linear stochastic system \eqref{sde:mult-jump-simp} is controllable on $[0, 1]$ and also on any subinterval of $[0, 1]$. 
\end{theorem}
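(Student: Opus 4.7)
\textbf{Proof Proposal for Theorem \ref{thm:contr-Q}.}

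My plan is to reduce the time-varying pair $\bigl(A(t), B(t)\bigr)$ to a time-invariant controllable pair by a combination of a smooth time-varying change of coordinates and a preliminary state feedback, so that Theorem~\ref{thm:contr-AB} can be invoked on the transformed covariance equation. The critical observation to exploit is that under a deterministic, invertible change of state $z(t) = T(t) x(t)$, the multiplicative noise term $x(t) \, \dd \mu(t)$ retains precisely the same form in the new coordinates, namely $z(t) \, \dd \mu(t)$, so that the structural assumptions of Theorem~\ref{thm:contr-AB} remain intact after the reduction.

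First, I would invoke the Silverman--Morse structure theorem for uniformly controllable, index-invariant pairs to produce a sufficiently smooth, pointwise nonsingular $T(t)$ on $[0, 1]$ and a bounded feedback gain $F(t)$ with $u(t) = F(t) x(t) + v(t)$, such that the drift of the transformed system reads $\dot{z} = A_{n} z + B_{n} v + \text{(noise)}$, where $(A_{n}, B_{n})$ is the time-invariant controllable canonical pair used in Proposition~\ref{prp:U}. Applying It\^{o}'s rule to $z = T(t) x$ (with $T$ deterministic) yields
\begin{align*}
\dd z &= \bigl(\dot{T} T^{-1} + T (A + B F) T^{-1}\bigr) z \, \dd t + T B v \, \dd t \\
      &\quad + T C \, \dd m + z \, \dd \mu,
\end{align*}
so the transformed additive noise covariance intensity is $\tilde{M}(t) = T(t) C(t) D(t) C(t)\t T(t)\t \succeq 0$, and the multiplicative jump intensity $2\nu(t)$ is unchanged. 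The boundary covariances become $\tilde{\Sigma}_{0} = T(0) \Sigma_{0} T(0)\t \succ 0$ and $\tilde{\Sigma}_{1} = T(1) \Sigma_{1} T(1)\t \succ 0$.

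Next, with the system in this form, the covariance $\tilde{\Sigma}(t) = \mathbb{E}[z(t) z(t)\t]$ under a further state feedback $v(t) = K_{z}(t) z(t)$ satisfies precisely \eqref{ode:sigma-ABK} with data $(A_{n}, B_{n}, \tilde{M}, \nu)$. Theorem~\ref{thm:contr-AB} then supplies a continuous $K_{z}$ steering $\tilde{\Sigma}(0) = \tilde{\Sigma}_{0}$ to $\tilde{\Sigma}(1) = \tilde{\Sigma}_{1}$ with $\tilde{\Sigma}(t) \succ 0$ throughout. Pulling back, the state feedback $u(t) = F(t) x(t) + K_{z}(t) T(t) x(t)$ is admissible and steers $\Sigma(0) = \Sigma_{0}$ to $\Sigma(1) = \Sigma_{1}$ with $\Sigma(t) = T(t)^{-1} \tilde{\Sigma}(t) T(t)^{-\mbox{\tiny\sf T}} \succ 0$. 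For the subinterval statement, I would observe that uniform controllability and index invariance restrict to any subinterval $[s_{0}, s_{1}] \subseteq [0, 1]$, so the same reduction applies after a trivial affine time reparametrization back to $[0,1]$.

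The main obstacle I anticipate is bookkeeping the regularity of the transformed data: the construction of $T(t)$ and $F(t)$ from $(A(t), B(t))$ consumes derivatives of $A, B$, and Theorem~\ref{thm:contr-AB} requires $\tilde{M}, \nu \in \mathcal{C}^{n-2}$. Verifying that the standing smoothness hypotheses on $A, B, C, D, \nu$ are enough to guarantee this (and that $T(t)$ is indeed $\mathcal{C}^{n}$ and pointwise invertible) is the technical core; once this is in place, the rest is a direct transcription of the argument given for the additive case in \cite{liu2022add}, with the covariance controllability step replaced by our Theorem~\ref{thm:contr-AB}.
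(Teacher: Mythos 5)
Your proposal follows essentially the same route as the paper: the paper also reduces $\big(A(t),B(t)\big)$, using uniform controllability and index invariance (via the Silverman--Morse machinery as in \cite{liu2022add}), to a time-invariant canonical pair through a time-varying coordinate change and preliminary feedback, and then invokes Theorem~\ref{thm:contr-AB}; your added observations that the state-dependent noise term keeps the form $z\,\dd\mu$ and that the assumptions restrict to subintervals are exactly the points the paper leaves implicit. The smoothness bookkeeping you flag is likewise glossed over in the paper, so it is not a gap relative to the paper's own argument.
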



\begin{remark}
Theorem \ref{thm:exist-unique-Q} of Section \ref{sec:soltn-nd} states that if the pair $\big(A(t), B(t)\big)$ is totally controllable on $[0, 1]$, there exists a unique optimal control for system \eqref{sde:mult-jump-simp} for any $\Sigma_{0}, \Sigma_{1} \succ 0$. 
This fact immediately implies that Theorem \ref{thm:contr-Q} still holds when the assumptions of uniform controllability and invariant indices are relaxed to total controllability. 
However, a direct proof is not available at the moment. 
\end{remark}


\section{Solution to the Coupled ODEs} \label{sec:soltn-nd}

In this section, the existence and uniqueness of the solution to the coupled matrix ODEs \eqref{bdr:sigma-Q}, \eqref{ode:pi-Q-simp}, \eqref{ode:sigma-Q-simp}, is shown. 
We assume throughout this section that $\big(A(t), B(t)\big)$ is totally controllable on $[0, 1]$. 
That is, for all $0 \leq t_{0} < t_{1} \leq 1$, there exists $t \in (t_{0}, t_{1})$ such that $\rank \Theta_{n}(t) = n$, where $\Theta_{n}(t)$ is given by \eqref{def:theta}. 
Since $R(t) \succ 0$, one can easily check that $\big(A(t) + \nu(t) I_{n}, B(t)R^{-\frac{1}{2}}(t)\big)$ is also totally controllable on $[0, 1]$. 
For simplicity, it is assumed that $\nu(t) \equiv 0$ and that $R(t) \equiv I_{p}$ for the rest of this section, 
since we can define $A_{\text{new}}(t) = A(t) + \nu(t) I_{n}$ and $B_{\text{new}}(t) = B(t)R^{-\frac{1}{2}}(t)$ so that 
$A(t) = A_{\text{new}}(t) - \nu(t) I_{n}$ and $B(t) = B_{\text{new}}(t) R^{\frac{1}{2}}(t)$ can be recovered easily. 
Under these assumptions, a useful result is obtained from our analysis, which is a simple characterization of the maximal interval of existence of the solution to a matrix Riccati differential equation.


\subsection{Properties of the State Transition Matrix}

Let
\begin{equation} \label{eqn:phi-M-blk}
\Phi_{M}(t, s) \triangleq 
\begin{bmatrix}
\Phi_{11}(t, s) & \Phi_{12}(t, s) \\
\Phi_{21}(t, s) & \Phi_{22}(t, s)
\end{bmatrix}
\end{equation}
denote the state transition matrix for 
\begin{equation*}
M(t) \triangleq 
\begin{bmatrix}
A(t) & -B(t)B(t)\t \\
-Q(t) & -A(t)\t
\end{bmatrix}.
\end{equation*}
That is, $\Phi_{M}(t, s)$ satisfies 
\begin{equation*} 
\frac{\partial}{\partial t} \Phi_{M}(t, s) = M(t) \Phi_{M}(t, s), \quad \Phi_{M}(s, s) = I_{n}.
\end{equation*}
Let
\begin{equation*}
\Phi_{M}(t, s)^{-1} = \Phi_{M}(s, t) \triangleq 
\begin{bmatrix}
\Phi_{11}(s, t) & \Phi_{12}(s, t) \\
\Phi_{21}(s, t) & \Phi_{22}(s, t)
\end{bmatrix}.
\end{equation*}


\begin{lemma} \label{lem:phi-eqn}
For all $t, s \in \mathbb{R}$,
\begin{align} 
\Phi_{12}(t, s)\t \Phi_{22}(t, s) &= \Phi_{22}(t, s)\t \Phi_{12}(t, s), \nonumber \\
\Phi_{21}(t, s)\t \Phi_{11}(t, s) &= \Phi_{11}(t, s)\t \Phi_{21}(t, s), \nonumber \\
\Phi_{12}(t, s) \Phi_{11}(t, s)\t &= \Phi_{11}(t, s) \Phi_{12}(t, s)\t, \label{eqn:phi-blk-12} \\
\Phi_{21}(t, s) \Phi_{22}(t, s)\t &= \Phi_{22}(t, s) \Phi_{21}(t, s)\t, \label{eqn:phi-blk-21} \\
\Phi_{11}(t, s)\t \Phi_{22}(t, s) &- \Phi_{21}(t, s)\t \Phi_{12}(t, s) = I_{n}, \nonumber \\
\Phi_{11}(t, s) \Phi_{22}(t, s)\t &- \Phi_{12}(t, s) \Phi_{21}(t, s)\t = I_{n}. \label{eqn:phi-blk-11}
\end{align}
\end{lemma}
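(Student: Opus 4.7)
The plan is to recognize that $M(t)$ is a Hamiltonian matrix with respect to the standard symplectic form
\[
J \triangleq \begin{bmatrix} 0 & I_{n} \\ -I_{n} & 0 \end{bmatrix},
\]
which forces the state transition matrix $\Phi_{M}(t,s)$ to be symplectic. From this single structural identity all six equations in the lemma fall out by expanding into $n\times n$ blocks.

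First I would verify the Hamiltonian property $M(t)^{\mbox{\tiny\sf T}} J + J M(t) = 0$ by direct block multiplication, using only $Q(t)\t = Q(t)$ and the obvious symmetry of $B(t)B(t)\t$. Next I would establish the symplectic identity
\[
\Phi_{M}(t,s)\t J \, \Phi_{M}(t,s) = J, \qquad \forall\, t,s \in \R,
\]
by differentiating the left-hand side with respect to $t$:
\[
\frac{\partial}{\partial t}\bigl[\Phi_{M}\t J\, \Phi_{M}\bigr] = \Phi_{M}\t \bigl(M(t)\t J + J M(t)\bigr)\Phi_{M} = 0,
\]
and noting that at $t=s$ the value is $J$ since $\Phi_{M}(s,s) = I_{2n}$. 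Multiplying on the left by $(\Phi_{M}(t,s)\t)^{-1} = \Phi_{M}(s,t)\t$ and on the right by $\Phi_{M}(t,s)^{-1} = \Phi_{M}(s,t)$, and using $J^{-1} = -J$, I would then obtain the companion identity
\[
\Phi_{M}(t,s)\, J\, \Phi_{M}(t,s)\t = J.
\]

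The final step is purely mechanical: substitute the $2\times 2$ block partition \eqref{eqn:phi-M-blk} into each of the two symplectic identities and read off the blocks. The identity $\Phi_{M}\t J \Phi_{M} = J$ yields the three relations in the top group of the lemma (the two symmetry statements involving $\Phi_{11}\t\Phi_{21}$ and $\Phi_{12}\t\Phi_{22}$, together with $\Phi_{11}\t\Phi_{22} - \Phi_{21}\t\Phi_{12} = I_{n}$), while $\Phi_{M} J \Phi_{M}\t = J$ yields the symmetry relations \eqref{eqn:phi-blk-12}, \eqref{eqn:phi-blk-21} and the identity \eqref{eqn:phi-blk-11}.

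I do not anticipate a real obstacle here; the only point requiring a modicum of care is keeping track of signs when inverting $J$ and when extracting each block from the symmetric/antisymmetric combinations. Everything else is a direct consequence of the symplectic structure inherited from the Hamiltonian form of $M(t)$.
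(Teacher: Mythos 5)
Your proof is correct and complete: $M(t)$ is Hamiltonian with respect to $J = \bigl[\begin{smallmatrix} 0 & I_n \\ -I_n & 0 \end{smallmatrix}\bigr]$ (using only the symmetry of $Q(t)$ and $B(t)B(t)\t$), so differentiating $\Phi_M\t J \Phi_M$ shows it is constant and equal to $J$, the companion identity $\Phi_M J \Phi_M\t = J$ follows by inversion with $J^{-1} = -J$, and reading off the $n \times n$ blocks of the two identities yields exactly the six relations of the lemma. The paper itself gives no argument here--it simply defers to the first paragraph of the proof of Lemma 3 in \cite{chen2018III}--and the symplectic reasoning you supply is precisely the standard argument underlying that cited result, so your proposal matches the intended proof while having the merit of being self-contained.
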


\begin{proof}
The first paragraph of the proof of Lemma 3 in \cite{chen2018III} implies that Lemma \ref{lem:phi-eqn} is true for all $t, s \in \mathbb{R}$. \hfill
\end{proof}


The proofs of the following four lemmas can be found in the Appendix.

\begin{lemma} \label{lem:phi-inv}
For all $t, s \in \mathbb{R}$, $\Phi_{11}(t, s)$ and $\Phi_{22}(t, s)$ are invertible. In particular,
\begin{align}
&\Phi_{11}(t, s) = \Phi_{22}(s, t)\t \label{eqn:phi-11-22} \\
&\hspace{10mm} = \Big(\Phi_{11}(s, t) - \Phi_{12}(s, t) \Phi_{22}(s, t)^{-1} \Phi_{21}(s, t)\Big)^{-1} \nonumber \\
&\hspace{10mm} = \Big(\Phi_{22}(t, s)\t - \Phi_{12}(t, s)\t \Phi_{11}(t, s)^{- \mbox{\tiny\sf T}} \Phi_{21}(t, s)\t \Big)^{-1}. \nonumber
\end{align}
\end{lemma}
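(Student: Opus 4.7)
The plan is to exploit the symplectic (``Hamiltonian'') structure of $M(t)$, which is already encoded by the six identities of Lemma~\ref{lem:phi-eqn}. Setting $J = \left[\begin{smallmatrix} 0 & I_n \\ -I_n & 0 \end{smallmatrix}\right]$, those identities are precisely $\Phi_M(t,s)\t J \, \Phi_M(t,s) = \Phi_M(t,s) \, J \, \Phi_M(t,s)\t = J$. Since every state transition matrix is invertible, solving this for $\Phi_M(t,s)^{-1}$ gives
\begin{equation*}
\Phi_M(s,t) = \Phi_M(t,s)^{-1} = -J \, \Phi_M(t,s)\t J = \begin{bmatrix} \Phi_{22}(t,s)\t & -\Phi_{12}(t,s)\t \\ -\Phi_{21}(t,s)\t & \Phi_{11}(t,s)\t \end{bmatrix},
\end{equation*}
and reading off the $(1,1)$ block delivers the first identity $\Phi_{11}(t,s) = \Phi_{22}(s,t)\t$.

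The core step is invertibility of $\Phi_{22}(t,s)$ (which by the identity just derived is equivalent to invertibility of $\Phi_{11}(t,s)$). Suppose $s<t$ in $[0,1]$ and $\Phi_{22}(t,s) v = 0$ for some $v$. Propagate the Hamiltonian ODE $\dot x = A x - B B\t y$, $\dot y = -Q x - A\t y$ from $(x(s),y(s)) = (0,v)$; then $y(t) = \Phi_{22}(t,s) v = 0$. The cross-energy identity
\begin{equation*}
\frac{d}{d\tau}\bigl(x(\tau)\t y(\tau)\bigr) = -x(\tau)\t Q(\tau) x(\tau) - \|B(\tau)\t y(\tau)\|^{2},
\end{equation*}
integrated over $[s,t]$, yields $0 = -\int_{s}^{t} (x\t Q x + \|B\t y\|^{2}) \, d\tau$, forcing $B(\tau)\t y(\tau) \equiv 0$ on $[s,t]$. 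Consequently $\dot x = A x$ with $x(s)=0$, so $x \equiv 0$ and $\dot y = -A\t y$ on $[s,t]$.

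The next ingredient is the coupling to the controllability recursion. Differentiating $B(\tau)\t y(\tau) \equiv 0$ and using $\dot y = -A\t y$, one obtains $0 = (\dot B - AB)\t y = \Gamma_{1}\t y$; iterating with the recursion $\Gamma_{k} = -A \Gamma_{k-1} + \dot\Gamma_{k-1}$ from \eqref{def:theta} gives $\Gamma_{k}(\tau)\t y(\tau) \equiv 0$ on $[s,t]$ for every $k \geq 0$, hence $\Theta_{n}(\tau)\t y(\tau) \equiv 0$ on $[s,t]$. Total controllability provides $\tau^{*} \in (s,t)$ with $\rank \Theta_{n}(\tau^{*}) = n$, so $y(\tau^{*}) = 0$; uniqueness for the linear ODE $\dot y = -A\t y$ then yields $y \equiv 0$ and in particular $v = 0$, contradicting $v \neq 0$. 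The case $t<s$ is symmetric. Thus $\Phi_{22}(t,s)$, and hence $\Phi_{11}(t,s)$, is nonsingular.

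Finally, the two Schur-complement expressions follow from block-matrix inversion of $\Phi_{M}(s,t)$: since $\Phi_{22}(s,t) = \Phi_{11}(t,s)\t$ is invertible, the $(1,1)$ block of $\Phi_{M}(s,t)^{-1} = \Phi_{M}(t,s)$ equals $\bigl[\Phi_{11}(s,t) - \Phi_{12}(s,t)\Phi_{22}(s,t)^{-1}\Phi_{21}(s,t)\bigr]^{-1}$, which is the middle expression for $\Phi_{11}(t,s)$. Substituting the symplectic identifications $\Phi_{11}(s,t)=\Phi_{22}(t,s)\t$, $\Phi_{12}(s,t)=-\Phi_{12}(t,s)\t$, $\Phi_{21}(s,t)=-\Phi_{21}(t,s)\t$, $\Phi_{22}(s,t)=\Phi_{11}(t,s)\t$ converts it into the third expression. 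The main obstacle is the invertibility argument: one must notice that $x\t y$ is the right ``energy'' to cancel the cross-terms and isolate $\|B\t y\|^{2}$, and then match repeated time derivatives with the Silverman--Meadows recursion so that total controllability can be invoked at an interior point.
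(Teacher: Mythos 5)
Your proof is correct, and it takes a genuinely different route from the paper's. The paper proves invertibility of $\Phi_{11}(t,s)$ through the Riccati equation: it takes the solution of \eqref{ode:pi-Q-simp} with initial value $0$, sandwiches it between the $Q\equiv 0$ solution (identically zero) and the $B\equiv 0$ solution $-\int_s^t \Phi_A(\tau,t)\t Q(\tau)\Phi_A(\tau,t)\,\dd\tau$ using the monotonicity/comparison theorem for Riccati equations, concludes there is no finite escape time, and then invokes Proposition \ref{prp:pi-Q-exist} (existence of the Riccati solution on an interval iff $\Phi_{11}+\Phi_{12}\Pi_0$ is invertible there) with $\Pi_0=0$; the identity $\Phi_{11}(t,s)=\Phi_{22}(s,t)\t$ is then extracted from \eqref{eqn:phi-blk-11} together with a Schur-complement computation on $\Phi_M(t,s)$. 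You instead work directly on the Hamiltonian flow: you repackage Lemma \ref{lem:phi-eqn} as symplecticity, read off the blocks of $\Phi_M(s,t)=-J\,\Phi_M(t,s)\t J$ (which, incidentally, also yields the relations of Lemma \ref{lem:phi-12-21} without any circularity), and obtain injectivity of $\Phi_{22}(t,s)$ from a two-point argument using the energy identity $\frac{\dd}{\dd\tau}\big(x\t y\big)=-x\t Q x-\|B\t y\|^{2}$, finishing with block inversion for the two Schur-complement expressions. Both arguments are sound; yours is more elementary and self-contained, while the paper's reuses Riccati machinery (comparison theorem and the representation of $\Pi$ via the transition-matrix blocks) that it needs elsewhere anyway.

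One refinement: your appeal to total controllability is superfluous, and removing it makes your proof match the lemma's actual strength. Once you have $B\t y\equiv 0$, hence $x\equiv 0$, on $[s,t]$, the costate obeys the linear equation $\dot y=-A(\tau)\t y$ with terminal condition $y(t)=\Phi_{22}(t,s)v=0$, so backward uniqueness already forces $y\equiv 0$ and $v=0$; no rank condition on $\Theta_n$ and no interior point $\tau^{*}$ are needed. This matters because the remark following Lemma \ref{lem:phi-12-21} explicitly notes that Lemma \ref{lem:phi-inv} does not require total controllability, and because the lemma is stated for all $t,s\in\mathbb{R}$ while you restrict to $s<t$ in $[0,1]$, the only interval on which total controllability is assumed; the controllability-free version of your argument applies verbatim on any interval.
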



\begin{lemma} \label{lem:phi-12-21}
For all $t, s \in \mathbb{R}$, 
\begin{align}
\Phi_{12}(t, s) &= - \Phi_{12}(s, t)\t, \label{eqn:phi-12-equiv} \\
\Phi_{21}(t, s) &= - \Phi_{21}(s, t)\t. \nonumber
\end{align}
\end{lemma}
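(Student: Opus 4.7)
The plan is to exploit the Hamiltonian structure of $M(t)$, which makes $\Phi_{M}(t,s)$ symplectic and yields a clean closed-form inverse that already embeds the desired symmetry. Let $J \triangleq \begin{bmatrix} 0 & I_{n} \\ -I_{n} & 0 \end{bmatrix}$. A direct block computation shows that $J M(t)$ is symmetric, so differentiating $\Phi_{M}(t,s)\t J \Phi_{M}(t,s)$ in $t$ gives zero; evaluating at $t = s$ yields the symplectic identity $\Phi_{M}(t,s)\t J \Phi_{M}(t,s) = J$ for all $t, s \in \mathbb{R}$. Using $J^{-1} = -J$, this inverts to
\begin{equation*}
\Phi_{M}(t,s)^{-1} \;=\; -J \, \Phi_{M}(t,s)\t J \;=\; \begin{bmatrix} \Phi_{22}(t,s)\t & -\Phi_{12}(t,s)\t \\ -\Phi_{21}(t,s)\t & \Phi_{11}(t,s)\t \end{bmatrix}.
\end{equation*}

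The six identities in Lemma \ref{lem:phi-eqn} are precisely the block-wise expansion of the symplectic condition. Therefore, in the write-up I would bypass Hamiltonian-system vocabulary and verify the candidate inverse formula directly by block multiplication, invoking Lemma \ref{lem:phi-eqn} at each step. For instance, the $(1,1)$ block of the product reduces to $\Phi_{22}(t,s)\t \Phi_{11}(t,s) - \Phi_{12}(t,s)\t \Phi_{21}(t,s)$, which equals $I_{n}$ by the fifth identity of Lemma \ref{lem:phi-eqn}; the two off-diagonal blocks vanish by the first two identities, and the $(2,2)$ block again equals $I_{n}$ by the fifth identity.

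Finally, comparing this inverse formula block-by-block with the partition \eqref{eqn:phi-M-blk} of $\Phi_{M}(s,t) = \Phi_{M}(t,s)^{-1}$ reads off $\Phi_{12}(s,t) = -\Phi_{12}(t,s)\t$ and $\Phi_{21}(s,t) = -\Phi_{21}(t,s)\t$, which, after swapping the roles of $t$ and $s$, are exactly the two asserted identities. As a bonus, the same comparison reproves \eqref{eqn:phi-11-22} from Lemma \ref{lem:phi-inv}. I do not anticipate a serious obstacle: the only creative step is writing down the correct candidate inverse, after which the verification is routine block-matrix arithmetic using identities already in hand.
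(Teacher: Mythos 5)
Your proof is correct, but it follows a genuinely different route from the paper's. The paper proves \eqref{eqn:phi-12-equiv} by combining the symmetry relation \eqref{eqn:phi-blk-12} with the identity $\Phi_{11}(t,s)=\Phi_{22}(s,t)\t$ from Lemma \ref{lem:phi-inv} (and hence with the Riccati-based invertibility of $\Phi_{11}$ and $\Phi_{22}$), and then reads off the result from the off-diagonal blocks of $\Phi_{M}(t,s)\Phi_{M}(s,t)=I_{2n}$; the second identity is handled symmetrically using \eqref{eqn:phi-blk-21}. You instead exploit the Hamiltonian structure of $M(t)$ (equivalently, verify by block multiplication, using the identities of Lemma \ref{lem:phi-eqn}, that $\bigl[\begin{smallmatrix}\Phi_{22}(t,s)\t & -\Phi_{12}(t,s)\t \\ -\Phi_{21}(t,s)\t & \Phi_{11}(t,s)\t\end{smallmatrix}\bigr]$ is the inverse of $\Phi_{M}(t,s)$) and then match this inverse block-by-block against $\Phi_{M}(s,t)$. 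Your block checks are right: the diagonal blocks reduce to (the transpose of) the identity $\Phi_{11}\t\Phi_{22}-\Phi_{21}\t\Phi_{12}=I_{n}$ and the off-diagonal blocks vanish by the first two identities of Lemma \ref{lem:phi-eqn}. What your route buys is that it bypasses Lemma \ref{lem:phi-inv} entirely — no invertibility of $\Phi_{11}$ or $\Phi_{22}$ is needed — and it recovers \eqref{eqn:phi-11-22} as a byproduct, making explicit the symplectic origin of all of these relations (which is also the source of Lemma \ref{lem:phi-eqn} via the cited reference); what the paper's route buys is brevity given that Lemmas \ref{lem:phi-eqn} and \ref{lem:phi-inv} are already in place. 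Like the paper's argument, yours correctly uses no controllability assumption, consistent with the remark following the lemma.
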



\begin{remark}
It is worth pointing out that Lemma \ref{lem:phi-eqn}, Lemma \ref{lem:phi-inv}, and Lemma \ref{lem:phi-12-21} do not require $\big(A(t), B(t)\big)$ to be totally controllable, while the rest of the results in this section do. 
\end{remark}


\begin{lemma} \label{lem:phi-inv-mono}
For all $t \neq s$, $\Phi_{12}(t, s)$ is invertible. 
Moreover, for $t, s \in \mathbb{R}$,
\begin{equation} \label{eqn:phi-12}
- \Phi_{11}(t, s)^{-1} \Phi_{12}(t, s) = \Phi_{12}(s, t) \Phi_{22}(s, t)^{-1}.
\end{equation}
For $s < t_{1} < t_{2}$,
\begin{equation} \label{ineq:phi-mono-1}
0 \prec - \Phi_{11}(t_{1}, s)^{-1} \Phi_{12}(t_{1}, s) \prec - \Phi_{11}(t_{2}, s)^{-1} \Phi_{12}(t_{2}, s).
\end{equation}
For $t_{1} < t_{2} < s$,
\begin{equation} \label{ineq:phi-mono-2}
- \Phi_{11}(t_{1}, s)^{-1} \Phi_{12}(t_{1}, s) \prec - \Phi_{11}(t_{2}, s)^{-1} \Phi_{12}(t_{2}, s) \prec 0.
\end{equation}
\end{lemma}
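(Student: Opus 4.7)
\bigskip

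\noindent\textit{Proof proposal.}

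The core object in my approach is $X(t) := -\Phi_{11}(t,s)^{-1}\Phi_{12}(t,s)$, which is well-defined on all of $\R$ because $\Phi_{11}(t,s)$ is invertible (Lemma~\ref{lem:phi-inv}) and is \emph{symmetric} because identity \eqref{eqn:phi-blk-12} rearranges to $\Phi_{11}(t,s)^{-1}\Phi_{12}(t,s) = \Phi_{12}(t,s)\t \Phi_{11}(t,s)^{-\mbox{\tiny\sf T}}$. My plan is to derive a clean differential equation for $X$, use it to prove the strict sign and monotonicity claims via a single integral formula, and then read off the invertibility of $\Phi_{12}(t,s)$ as an immediate consequence. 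The algebraic identity \eqref{eqn:phi-12} I would deduce separately from the preceding lemmas.

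Starting from $\Phi_{11}(t,s)X(t) = -\Phi_{12}(t,s)$ and differentiating in $t$, plugging in $\partial_t\Phi_{11} = A\Phi_{11} - BB\t\Phi_{21}$ and $\partial_t\Phi_{12} = A\Phi_{12} - BB\t\Phi_{22}$ from the Hamiltonian ODE, the terms containing $A$ collapse because $\Phi_{11}X+\Phi_{12}=0$, leaving
\[
\Phi_{11}(t,s)\dot X(t) \;=\; B(t)B(t)\t\bigl(\Phi_{22}(t,s) + \Phi_{21}(t,s)X(t)\bigr).
\]
Using the block identity \eqref{eqn:phi-blk-11} together with $X = X\t$, one checks that the right factor $\Phi_{22}(t,s)+\Phi_{21}(t,s)X(t)$ is exactly $\Phi_{11}(t,s)^{-\mbox{\tiny\sf T}}$. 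This yields the key formula
\[
\dot X(t) \;=\; \Phi_{11}(t,s)^{-1} B(t) B(t)\t \Phi_{11}(t,s)^{-\mbox{\tiny\sf T}}, \qquad X(s)=0,
\]
so $\dot X(t)\succeq 0$ everywhere, and integrating from $s$ gives $v\t X(t) v = \int_s^t \|B(\tau)\t \Phi_{11}(\tau,s)^{-\mbox{\tiny\sf T}} v\|^2\, \dd\tau$, delivering all four sign/monotonicity statements up to strict definiteness.

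The main obstacle, and the step where total controllability is indispensable, is ruling out the zero eigenvector of $X(t_2)-X(t_1)$. Suppose, for instance, $s<t_1<t_2$ and $v\t[X(t_2)-X(t_1)]v=0$ for some nonzero $v$; set $y(\tau):=\Phi_{11}(\tau,s)^{-\mbox{\tiny\sf T}} v$, so that $B(\tau)\t y(\tau)\equiv 0$ on $[t_1,t_2]$. Differentiating $\Phi_{11}(\tau,s)\t y(\tau)=v$, substituting $\partial_\tau\Phi_{11}$, and invoking $B\t y=0$, I obtain the reduced dynamics $\dot y = -A(\tau)\t y$ on $[t_1,t_2]$. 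With this reduced equation, a direct computation shows $\tfrac{\dd}{\dd\tau}(\Gamma_k\t y) = \Gamma_{k+1}\t y$ using $\Gamma_{k+1}=\dot\Gamma_k - A\Gamma_k$, so by induction from $\Gamma_0\t y = B\t y = 0$ we conclude $\Gamma_k(\tau)\t y(\tau)=0$ for every $k\ge 0$ on $[t_1,t_2]$, i.e., $\Theta_n(\tau)\t y(\tau)=0$. Total controllability produces a $\tau^\ast\in(t_1,t_2)$ with $\rank\Theta_n(\tau^\ast)=n$, forcing $y(\tau^\ast)=0$; uniqueness of the linear ODE $\dot y=-A\t y$ then makes $y$ identically zero on $[t_1,t_2]$, and hence $v = \Phi_{11}(\tau^\ast,s)\t y(\tau^\ast) = 0$, a contradiction. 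The identical argument with $[s,t]$ (resp.\ $[t,s]$) as the subinterval gives $X(t)\succ 0$ for $t>s$ and $X(t)\prec 0$ for $t<s$, and with $[t_1,t_2]$ it gives the two monotonicity chains \eqref{ineq:phi-mono-1} and \eqref{ineq:phi-mono-2}. Once $X(t)$ is strictly definite for $t\ne s$ it is invertible, whence $\Phi_{12}(t,s) = -\Phi_{11}(t,s)X(t)$ is invertible as well.

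Finally, \eqref{eqn:phi-12} is purely algebraic: combining $\Phi_{11}(t,s)=\Phi_{22}(s,t)\t$ from Lemma~\ref{lem:phi-inv} with $\Phi_{12}(t,s)=-\Phi_{12}(s,t)\t$ from Lemma~\ref{lem:phi-12-21} yields $-\Phi_{11}(t,s)^{-1}\Phi_{12}(t,s) = \Phi_{22}(s,t)^{-\mbox{\tiny\sf T}}\Phi_{12}(s,t)\t = \bigl(\Phi_{12}(s,t)\Phi_{22}(s,t)^{-1}\bigr)\t$, and the bracketed factor is symmetric by the first identity of Lemma~\ref{lem:phi-eqn} applied at the pair $(s,t)$, removing the transpose.
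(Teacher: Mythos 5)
Your proposal is correct, and its backbone coincides with the paper's proof: both hinge on the identity $\frac{\partial}{\partial t}\big({-\Phi_{11}(t,s)^{-1}\Phi_{12}(t,s)}\big)=\Phi_{11}(t,s)^{-1}B(t)B(t)\t\Phi_{11}(t,s)^{-\mbox{\tiny\sf T}}$, the resulting Gramian-type integral representation, and a contradiction argument that uses total controllability to exclude a null direction; invertibility of $\Phi_{12}$ then follows since $\Phi_{11}$ is invertible. Where you diverge is in how that contradiction is executed and in the derivation of \eqref{eqn:phi-12}. The paper differentiates $x\t\Phi_{11}(\tau,s)^{-1}B(\tau)$ repeatedly, which forces it to introduce the modified recursion $\Upsilon_{k}$ driven by $-A+BB\t\Phi_{21}\Phi_{11}^{-1}$ and to invoke the (feedback-invariance) fact $\range\Omega_{n}(t)=\range\Theta_{n}(t)$ before total controllability can be applied; you instead set $y(\tau)=\Phi_{11}(\tau,s)^{-\mbox{\tiny\sf T}}v$ and observe that on the interval where $B\t y\equiv 0$ the vector $y$ obeys the adjoint equation $\dot y=-A\t y$, so the standard recursion $\Gamma_{k+1}=\dot\Gamma_{k}-A\Gamma_{k}$ gives $\Theta_{n}(\tau)\t y(\tau)\equiv 0$ directly, with no modified controllability matrix or range-equality step (your appeal to ODE uniqueness is even redundant, since $y(\tau^{*})=0$ already yields $v=\Phi_{11}(\tau^{*},s)\t y(\tau^{*})=0$). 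This is a modest but genuine simplification of the key step. For \eqref{eqn:phi-12} the paper reads it off the block identity $\Phi_{11}(t,s)\Phi_{12}(s,t)+\Phi_{12}(t,s)\Phi_{22}(s,t)=0$ from $\Phi_{M}(t,s)\Phi_{M}(s,t)=I_{2n}$, whereas you reassemble it from Lemmas \ref{lem:phi-inv}, \ref{lem:phi-12-21} and the symmetry identity in Lemma \ref{lem:phi-eqn}; both are sound, the paper's being a one-liner and yours making the symmetry of $\Phi_{12}(s,t)\Phi_{22}(s,t)^{-1}$ explicit.
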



\begin{remark}
A list of properties of the state transition matrix $\Phi_{M}$ defined in \eqref{eqn:phi-M-blk} is summarized in Table \ref{tbl:compare} in the Appendix.
\end{remark}


\subsection{Existence of Solution to the Riccati Differential Equation}

We show a necessary and sufficient condition for the solution $\Pi(t)$ of \eqref{ode:pi-Q-simp} to exist on $[0, 1]$, which leads naturally to the maximal interval of existence of a matrix Riccati differential equation.

\begin{lemma} \label{lem:pi-Q-exist-sol}
Let $\Pi(s)$ for some $s \in [0, 1]$ be given. 
Then, \eqref{ode:pi-Q-simp} admits a unique solution $\Pi(t)$ on $[0, 1]$ if and only if
\begin{equation} \label{cond:pi-Q-exist}
- \Phi_{12}(0, s)^{-1} \Phi_{11}(0, s) \prec \Pi(s) \prec - \Phi_{12}(1, s)^{-1} \Phi_{11}(1, s),
\end{equation}
where $\Phi_{12}(0, 0^{+})^{-1} = + \infty$ and $\Phi_{12}(1, 1^{-})^{-1} = - \infty$\footnote{Positive infinity of the $n \times n$ positive semidefinite cone, written $+ \infty$, is the limit of a sequence of $n \times n$ positive definite matrices whose eigenvalues all grow to $+ \infty$. Likewise, for $- \infty$. 
Notice that as $s \to 0^{+}$, all eigenvalues of $\Phi_{12}(0, s)$ go to $0^{+}$, and therefore all eigenvalues of $\Phi_{12}(0, s)^{-1}$ go to $+ \infty$. Likewise, as $s \to 1^{-}$, all eigenvalues of $\Phi_{12}(1, s)$ go to $0^{-}$, and therefore all eigenvalues of $\Phi_{12}(1, s)^{-1}$ go to $- \infty$.}. Moreover, 
\begin{multline} \label{sol:pi-Q}
\Pi(t) = \Big(\Phi_{21}(t, s) + \Phi_{22}(t, s) \Pi(s)\Big) 
\\* 
\times \Big(\Phi_{11}(t, s) + \Phi_{12}(t, s) \Pi(s)\Big)^{-1},
\end{multline}
and 
\begin{align} \label{ineq:pi-Q-bd}
- \Phi_{12}(0, t)^{-1} \Phi_{11}(0, t) 
&\prec \Pi(t) \nonumber \\
&\prec - \Phi_{12}(1, t)^{-1} \Phi_{11}(1, t), \quad t \in [0, 1].
\end{align}
\end{lemma}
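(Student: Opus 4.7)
The plan is to apply the classical Radon linearization of the Riccati equation. For a given $\Pi(s)$, let $W(t), V(t)$ solve the linear Hamiltonian system associated with $M(t)$ subject to $W(s)=I_n$, $V(s)=\Pi(s)$; then $W(t)=\Phi_{11}(t,s)+\Phi_{12}(t,s)\Pi(s)$ and $V(t)=\Phi_{21}(t,s)+\Phi_{22}(t,s)\Pi(s)$. A direct computation shows that $\Pi(t)\triangleq V(t)W(t)^{-1}$ satisfies \eqref{ode:pi-Q-simp} wherever $W(t)$ is invertible, with symmetry of $\Pi(t)$ guaranteed by the symplectic-type identities in Lemma \ref{lem:phi-eqn}. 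This gives formula \eqref{sol:pi-Q} and reduces the existence/uniqueness question to a purely linear-algebraic one: for which $\Pi(s)$ is $W(t)$ nonsingular on all of $[0,1]$?

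For $t\neq s$, Lemma \ref{lem:phi-inv-mono} makes $\Phi_{12}(t,s)$ invertible, so I would factor $W(t)=\Phi_{12}(t,s)\,[\Pi(s)-P(t,s)^{-1}]$, where $P(t,s)\triangleq -\Phi_{11}(t,s)^{-1}\Phi_{12}(t,s)$ is symmetric by \eqref{eqn:phi-blk-12}. Invertibility of $W(t)$ is then equivalent to invertibility of the symmetric matrix $\Pi(s)-P(t,s)^{-1}$. The monotonicity estimates \eqref{ineq:phi-mono-1}--\eqref{ineq:phi-mono-2} make $P(t,s)$ Loewner-increasing in $t$ on each side of $s$, and passing through matrix inversion (with the correct sign on each side) shows that $P(t,s)^{-1}$ is Loewner-decreasing in $t$, with $P(t,s)^{-1}\to+\infty$ as $t\to s^+$ and $P(t,s)^{-1}\to-\infty$ as $t\to s^-$. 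Consequently $\Pi(s)-P(t,s)^{-1}$ is Loewner-increasing in $t$, running up from $-\infty$ at $t=s^+$ to $\Pi(s)-P(1,s)^{-1}$ at $t=1$, and from $\Pi(s)-P(0,s)^{-1}$ at $t=0$ up to $+\infty$ at $t=s^-$.

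Sufficiency of \eqref{cond:pi-Q-exist} is then immediate: on $(s,1]$ the family stays $\prec 0$ since it is bounded above by $\Pi(s)-P(1,s)^{-1}\prec 0$; on $[0,s)$ it stays $\succ 0$ since it is bounded below by $\Pi(s)-P(0,s)^{-1}\succ 0$; and $W(s)=I_n$ handles the boundary, so $W(t)$ is invertible throughout $[0,1]$. For necessity, if either bound in \eqref{cond:pi-Q-exist} fails to be strict, the continuously varying eigenvalues of $\Pi(s)-P(t,s)^{-1}$ must cross zero at some interior $t^{*}\in[0,1]$, making $W(t^{*})$ singular. The bound \eqref{ineq:pi-Q-bd} then follows by re-anchoring the whole construction at an arbitrary $t\in[0,1]$ in place of $s$: since $\Pi$ exists on $[0,1]$ when viewed from any base point, the same condition with $s\mapsto t$ must hold.

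The main obstacle I anticipate lies at $t=s$, where $\Phi_{12}$ degenerates. The footnote's assertion that the eigenvalues of $\Phi_{12}(0,s)$ tend to $0^+$ as $s\to 0^+$ (and the analogous claim at $s=1$) requires a local expansion of $\Phi_{12}(t,s)$ in $t-s$ exploiting the block structure of $M$, together with the total controllability of $\bigl(A(t),B(t)\bigr)$ to certify that $\Phi_{12}(t,s)$ has full rank in a deleted neighborhood of $s$ and that its smallest eigenvalue approaches zero from the correct side. A secondary delicacy is sign-tracking when converting the Loewner-monotonicity of $P(t,s)$ into that of $P(t,s)^{-1}$, which must be carried out separately on $(s,1]$ and on $[0,s)$ in order to produce the correctly signed infinite limits used throughout.
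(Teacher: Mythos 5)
Your argument is correct, and it rests on the same core machinery as the paper---the linear Hamiltonian (Radon) representation of the Riccati flow plus the invertibility/monotonicity facts of Lemma \ref{lem:phi-inv-mono}---but it is organized along a genuinely different route. The paper first proves \eqref{ineq:pi-Q-bd} by noting that $\Phi_{22}(t,r)\Phi_{12}(t,r)^{-1}$ satisfies the same Riccati equation and invoking the comparison (monotonicity) theorem for Riccati differential equations; it then reduces \eqref{cond:pi-Q-exist} to the single condition $\Pi(0)\prec-\Phi_{12}(1,0)^{-1}\Phi_{11}(1,0)$, applies the existence criterion of Proposition \ref{prp:pi-Q-exist} anchored at $t=0$, and cites a separate reference for the backward-time half of \eqref{sol:pi-Q}. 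You instead work directly at the base point $s$, factor $W(t)=\Phi_{11}(t,s)+\Phi_{12}(t,s)\Pi(s)=\Phi_{12}(t,s)\bigl[\Pi(s)-P(t,s)^{-1}\bigr]$ with $P(t,s)=-\Phi_{11}(t,s)^{-1}\Phi_{12}(t,s)$ symmetric by \eqref{eqn:phi-blk-12}, and settle both sufficiency and necessity via the Loewner monotonicity of $P(t,s)^{-1}$ on each side of $s$ together with an eigenvalue-crossing argument; the bound \eqref{ineq:pi-Q-bd} then comes for free by re-anchoring the criterion at each $t\in[0,1]$, so no Riccati comparison theorem and no reduction to $t=0$ are needed. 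Two points should be made explicit to close the argument: (i) your necessity step tacitly uses the converse half of Radon's lemma---existence of $\Pi$ on $[0,1]$ forces $W(t)$ to be nonsingular there, which is exactly the content of Proposition \ref{prp:pi-Q-exist} and Lemma \ref{lem:phi-AB-pi-Q}, since $W(t)$ is the closed-loop transition matrix $\Phi_{A-BB\t\Pi}(t,s)$---so the problem is not purely linear-algebraic without that statement; (ii) the limits $P(t,s)^{-1}\to+\infty$ as $t\to s^{+}$ and $P(t,s)^{-1}\to-\infty$ as $t\to s^{-}$, which you flag as the main obstacle, require no local expansion of $\Phi_{12}$: the Gramian identity \eqref{eqn:phi-11-12} together with total controllability gives $P(t,s)\succ0$ for $t>s$ (and $\prec 0$ for $t<s$, cf.\ \eqref{ineq:phi-mono-1}--\eqref{ineq:phi-mono-2}), while continuity and $\Phi_{12}(s,s)=0$ give $P(t,s)\to0$, so the extreme eigenvalues of $P(t,s)^{-1}$ diverge with the correct sign. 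With these two remarks supplied, your proof is complete and arguably more self-contained than the paper's.
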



\begin{corollary} \label{cor:pi-Q-exist}
Assume that, for all $t \in \mathbb{R}$, $\big(A(t), B(t)\big)$ is totally controllable and 
let $\mathcal{I}_{s} \subset \mathbb{R}$ be the maximal interval of existence of the solution to \eqref{ode:pi-Q-simp}, starting from $\Pi(s) = \Pi_{s}$. 
Then, $\mathcal{I}_{s} = (t_{0}, t_{1})$, where
\begin{align*}
t_{0} &\triangleq \inf \big\{t: \, t < s,~ - \Phi_{12}(t, s)^{-1} \Phi_{11}(t, s) \prec \Pi_{s}\big\}, \\
t_{1} &\triangleq \sup \big\{t: \, t > s,~ - \Phi_{12}(t, s)^{-1} \Phi_{11}(t, s) \succ \Pi_{s}\big\}.
\end{align*}
\end{corollary}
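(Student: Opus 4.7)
The plan is to recognize Corollary~\ref{cor:pi-Q-exist} as a direct consequence of Lemma~\ref{lem:pi-Q-exist-sol} applied on shrinking/growing intervals containing $s$. The proof of Lemma~\ref{lem:pi-Q-exist-sol} never used the specific normalization $[0,1]$; given the corollary's assumption that $(A(t),B(t))$ is totally controllable on all of $\mathbb{R}$, the same argument shows that, for any compact interval $[a,b]$ with $s\in[a,b]$, the Riccati equation~\eqref{ode:pi-Q-simp} starting from $\Pi(s)=\Pi_s$ admits a unique solution on $[a,b]$ if and only if
\[
-\Phi_{12}(a,s)^{-1}\Phi_{11}(a,s) \prec \Pi_s \prec -\Phi_{12}(b,s)^{-1}\Phi_{11}(b,s),
\]
with the limiting conventions of the footnote in Lemma~\ref{lem:pi-Q-exist-sol} at $a=s^-$ and $b=s^+$. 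The maximal right (resp.\ left) extension of the solution is then obtained by maximizing $b$ (resp.\ minimizing $a$) subject to the corresponding strict inequality.

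Next I would verify that the sets defining $t_0$ and $t_1$ are genuine one-sided intervals. Inequalities~\eqref{ineq:phi-mono-1}--\eqref{ineq:phi-mono-2} assert that $t\mapsto -\Phi_{11}(t,s)^{-1}\Phi_{12}(t,s)$ is strictly increasing on $(s,\infty)$ from $0^+$ and strictly increasing on $(-\infty,s)$ up to $0^-$. Passing to inverses, which reverses order on the positive definite cone and on the negative definite cone, yields that $t\mapsto -\Phi_{12}(t,s)^{-1}\Phi_{11}(t,s)$ is strictly \emph{decreasing} in $t$, running from $+\infty$ at $t=s^+$ downward and from $-\infty$ at $t=s^-$ upward. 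Therefore the set in the definition of $t_1$ is of the form $(s,t_1)$ and the set for $t_0$ is of the form $(t_0,s)$, with $t_0<s<t_1$. Combining these two observations, for any $t\in(t_0,t_1)$ one picks $a,b\in(t_0,t_1)$ bracketing $t$ and $s$; the double inequality above then holds by construction, and the generalized Lemma~\ref{lem:pi-Q-exist-sol} delivers the solution at $t$, so $(t_0,t_1)\subset\mathcal{I}_s$. Conversely, if the solution extended to $t=t_1$, the same generalized lemma applied on $[s,t_1]$ would force $\Pi_s\prec -\Phi_{12}(t_1,s)^{-1}\Phi_{11}(t_1,s)$, contradicting the strict-monotonicity-based supremum characterization of $t_1$; the argument at $t_0$ is symmetric.

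The one step that requires real care is the extension of Lemma~\ref{lem:pi-Q-exist-sol} to an arbitrary interval $[a,b]\ni s$, and in particular the correct limiting behavior as $a\to s^-$ or $b\to s^+$, where $\Phi_{12}(\cdot,s)$ is singular. This is fully controlled by the monotonicity and positivity statements of Lemma~\ref{lem:phi-inv-mono} and is handled exactly as in the footnote of Lemma~\ref{lem:pi-Q-exist-sol}, so it does not pose a serious obstacle.
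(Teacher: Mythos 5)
Your proposal is correct and follows essentially the same route as the paper, which states the corollary without a separate proof precisely because the argument of Lemma~\ref{lem:pi-Q-exist-sol} (via Lemma~\ref{lem:phi-inv-mono} and Proposition~\ref{prp:pi-Q-exist}) applies verbatim on any interval $[a,b]\ni s$ once $\big(A(t),B(t)\big)$ is totally controllable on all of $\mathbb{R}$. The only point worth making explicit is that ruling $t_{1}$ (and $t_{0}$) out of $\mathcal{I}_{s}$ uses the continuity of $t\mapsto-\Phi_{12}(t,s)^{-1}\Phi_{11}(t,s)$ together with the openness of the strict Loewner inequality (so the defining sets cannot contain their endpoints), not strict monotonicity alone; with that one-line remark your argument is complete.
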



\subsection{Solution to the State Covariance Equation}

In this subsection we provide an explicit expression for the solution to the covariance matrix eqaution.
First, we need to specify an alternative expression for the state transition matrix $\Phi_{A-BB\t\Pi}(t, s)$ of $A(t) - B(t)B(t)\t\Pi(t)$.

\begin{lemma} \label{lem:phi-AB-pi-Q}
Let condition \eqref{cond:pi-Q-exist} hold so that $\Pi(t)$ exists on $[0, 1]$. 
The state transition matrix of $A(t)-B(t)B(t)\t\Pi(t)$ is given by
\begin{equation} \label{eqn:phi-AB-pi-Q}
\Phi_{A-BB\t\Pi}(t, s) = \Phi_{11}(t, s) + \Phi_{12}(t, s) \Pi(s), 
\end{equation}
for $s, t \in [0, 1]$.
\end{lemma}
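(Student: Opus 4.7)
The plan is to verify directly that the candidate
\[
X(t,s) \triangleq \Phi_{11}(t,s) + \Phi_{12}(t,s)\Pi(s)
\]
satisfies the same initial value problem that defines $\Phi_{A-BB\t\Pi}(t,s)$, and then invoke uniqueness of solutions to linear matrix ODEs. First I would check the initial condition at $t=s$: since $\Phi_{M}(s,s)=I_{2n}$ we have $\Phi_{11}(s,s)=I_{n}$ and $\Phi_{12}(s,s)=0$, so $X(s,s)=I_{n}$, matching the identity condition for a state transition matrix.

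Next I would differentiate $X(t,s)$ with respect to $t$. From the definition of $\Phi_{M}(t,s)$ and the block structure of $M(t)$, the top-row blocks obey
\begin{align*}
\tfrac{\partial}{\partial t}\Phi_{11}(t,s) &= A(t)\Phi_{11}(t,s)-B(t)B(t)\t\Phi_{21}(t,s),\\
\tfrac{\partial}{\partial t}\Phi_{12}(t,s) &= A(t)\Phi_{12}(t,s)-B(t)B(t)\t\Phi_{22}(t,s).
\end{align*}
Substituting these into $\partial X/\partial t$ and collecting the $A(t)$ and $-B(t)B(t)\t$ terms separately yields
\[
\tfrac{\partial}{\partial t}X(t,s) = A(t)X(t,s) - B(t)B(t)\t\bigl(\Phi_{21}(t,s)+\Phi_{22}(t,s)\Pi(s)\bigr).
\]

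The key step is then to recognize the right factor as $\Pi(t)X(t,s)$. This is where I would invoke Lemma \ref{lem:pi-Q-exist-sol}: under the hypothesis \eqref{cond:pi-Q-exist}, equation \eqref{sol:pi-Q} gives the solution $\Pi(t)$ in the closed form $(\Phi_{21}(t,s)+\Phi_{22}(t,s)\Pi(s))(\Phi_{11}(t,s)+\Phi_{12}(t,s)\Pi(s))^{-1}$, so that $X(t,s)$ is invertible on $[0,1]$ and
\[
\Phi_{21}(t,s)+\Phi_{22}(t,s)\Pi(s) = \Pi(t)X(t,s).
\]
Plugging this back in gives $\partial X/\partial t = \bigl(A(t)-B(t)B(t)\t\Pi(t)\bigr)X(t,s)$, which together with $X(s,s)=I_{n}$ is exactly the defining initial value problem of $\Phi_{A-BB\t\Pi}(t,s)$. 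Uniqueness of solutions to linear matrix ODEs with continuous coefficients then yields \eqref{eqn:phi-AB-pi-Q}.

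The only potentially delicate point is ensuring that $X(t,s)$ remains invertible throughout $[0,1]$, since this is what makes the identification $\Pi(t)X(t,s)=\Phi_{21}(t,s)+\Phi_{22}(t,s)\Pi(s)$ legitimate for every $t$ in the interval; but this is precisely what Lemma \ref{lem:pi-Q-exist-sol} asserts under \eqref{cond:pi-Q-exist}. Everything else is a mechanical differentiation using the block partition of $M(t)$.
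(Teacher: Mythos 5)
Your proposal is correct and follows essentially the same route as the paper: check $X(s,s)=I_n$, differentiate using the block ODEs for $\Phi_{11}$ and $\Phi_{12}$, and use the closed form \eqref{sol:pi-Q} from Lemma \ref{lem:pi-Q-exist-sol} to rewrite $\Phi_{21}(t,s)+\Phi_{22}(t,s)\Pi(s)$ as $\Pi(t)\bigl(\Phi_{11}(t,s)+\Phi_{12}(t,s)\Pi(s)\bigr)$, concluding by uniqueness of the linear initial value problem. Your explicit remark about the invertibility of $\Phi_{11}(t,s)+\Phi_{12}(t,s)\Pi(s)$ is a point the paper leaves implicit but is indeed guaranteed by Lemma \ref{lem:pi-Q-exist-sol} under \eqref{cond:pi-Q-exist}.
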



\begin{proposition} \label{prp:sigma1-Q}
Let condition \eqref{cond:pi-Q-exist} hold so that $\Pi(t)$ exists on $[0, 1]$ and let  $\Sigma(0) = \Sigma_{0} \succeq 0$. 
Then, the solution $\Sigma(t)$ of \eqref{ode:sigma-Q-simp} for $t \in [0, 1]$ is,
\begin{multline} \label{sol:sigma-Q}
\Sigma(t) = \Phi_{A-BB\t\Pi}(t, 0) \Sigma_{0} \Phi_{A-BB\t\Pi}(t, 0)\t 
\\* 
+ \int_{0}^{t} \Phi_{A-BB\t\Pi}(t, s) C(s)D(s)C(s)\t \Phi_{A-BB\t\Pi}(t, s)\t \, \dd s,
\end{multline}
where $\Phi_{A-BB\t\Pi}(t, s)$ is given by \eqref{eqn:phi-AB-pi-Q} and $\Pi(t)$ is given by \eqref{sol:pi-Q}. 
In particular, for any $s \in [0, 1]$, as $\Pi(s) \to - \Phi_{12}(1, s)^{-1} \Phi_{11}(1, s)$, then $\Sigma(1) \to 0_{n \times n}$. 
Furthermore, if $\Sigma_{0} \succ 0$, then, as $\Pi(s) \to - \Phi_{12}(0, s)^{-1} \Phi_{11}(0, s)$, $\Sigma(1) \to +\infty$.
\end{proposition}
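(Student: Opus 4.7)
The plan is to first derive the explicit formula \eqref{sol:sigma-Q}, and then handle the two limiting claims separately. With the section's standing assumptions $\nu \equiv 0$ and $R \equiv I_p$, equation \eqref{ode:sigma-Q-simp} collapses to the linear Lyapunov matrix ODE
\begin{equation*}
\dot{\Sigma} = \bigl[A(t) - B(t)B(t)\t\Pi(t)\bigr]\Sigma + \Sigma\bigl[A(t) - B(t)B(t)\t\Pi(t)\bigr]\t + C(t)D(t)C(t)\t,
\end{equation*}
whose unique solution with $\Sigma(0) = \Sigma_0$ is given by the standard variation-of-parameters formula in terms of the state transition matrix of $A - BB\t\Pi$. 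Substituting the expression $\Phi_{A-BB\t\Pi}(t,s) = \Phi_{11}(t,s) + \Phi_{12}(t,s)\Pi(s)$ from Lemma~\ref{lem:phi-AB-pi-Q} then yields \eqref{sol:sigma-Q}.

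For the first limit, $\Sigma(1) \to 0$ as $\Pi(s) \to -\Phi_{12}(1,s)^{-1}\Phi_{11}(1,s)$, the crucial observation is that the matrix $\Phi_{A-BB\t\Pi}(1,s) = \Phi_{11}(1,s) + \Phi_{12}(1,s)\Pi(s)$ itself tends to the zero matrix in this limit. Using the semigroup property $\Phi_{A-BB\t\Pi}(1,\tau) = \Phi_{A-BB\t\Pi}(1,s)\,\Phi_{A-BB\t\Pi}(s,\tau)$ for every $\tau \in [0,1]$, one factors this small outer matrix out of \eqref{sol:sigma-Q}, giving
\begin{equation*}
\Sigma(1) = \Phi_{A-BB\t\Pi}(1,s)\,\Omega(s)\,\Phi_{A-BB\t\Pi}(1,s)\t,
\end{equation*}
where $\Omega(s)$ collects the contributions of $\Sigma_0$ and of the stochastic-noise integral. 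Alternatively, one may argue that $\Phi_{A-BB\t\Pi}(1,\tau) \to 0$ pointwise for every $\tau \in [0,1)$ and invoke dominated convergence, once a uniform bound on $\|\Phi_{A-BB\t\Pi}(1,\tau)\|$ in a neighborhood of the limiting $\Pi(s)$ has been established.

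For the second limit, $\Sigma(1) \to +\infty$ as $\Pi(s) \to -\Phi_{12}(0,s)^{-1}\Phi_{11}(0,s)$ with $\Sigma_0 \succ 0$, I would use \eqref{sol:pi-Q} evaluated at $t=0$ to write $\Pi(0) = [\Phi_{21}(0,s) + \Phi_{22}(0,s)\Pi(s)][\Phi_{11}(0,s) + \Phi_{12}(0,s)\Pi(s)]^{-1}$. The denominator $V(s) := \Phi_{11}(0,s) + \Phi_{12}(0,s)\Pi(s)$ vanishes in this limit, so $\|\Pi(0)\|$ blows up. A short semigroup computation combined with Lemma~\ref{lem:phi-AB-pi-Q} gives the clean formula $\Phi_{A-BB\t\Pi}(1,0) = [\Phi_{11}(1,s) + \Phi_{12}(1,s)\Pi(s)]\,V(s)^{-1}$, whose smallest singular value diverges to $+\infty$ as $V(s) \to 0$. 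Discarding the positive semidefinite integral term of \eqref{sol:sigma-Q} gives the lower bound $\Sigma(1) \succeq \Phi_{A-BB\t\Pi}(1,0)\,\Sigma_0\,\Phi_{A-BB\t\Pi}(1,0)\t$, and since $\Sigma_0 \succ 0$ this bound tends to $+\infty$ in the semidefinite order.

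The main obstacle is the first limit: the outer factor $\Phi_{A-BB\t\Pi}(1,s)$ shrinks to zero while the inner term $\Omega(s)$ generically blows up, since $\Phi_{A-BB\t\Pi}(s,\tau)$ becomes singular for $\tau$ approaching $1$ in the limit. Resolving this $0\cdot\infty$ indeterminacy is the technical heart of the proof, and is likely accomplished by either a sharp quantitative estimate of the cancellation between the outer and inner factors, or a dominated-convergence argument with a carefully chosen envelope.
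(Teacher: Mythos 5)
Your derivation of \eqref{sol:sigma-Q} (variation of parameters plus Lemma~\ref{lem:phi-AB-pi-Q}) and your argument for the blow-up limit are sound and essentially the paper's: the paper simply notes that $\Pi(0)\to-\infty$ forces the $\Sigma_{0}$-term of \eqref{sol:sigma-Q} to diverge, which is exactly your lower bound after discarding the positive semidefinite integral; your version via $V(s)=\Phi_{11}(0,s)+\Phi_{12}(0,s)\Pi(s)$ is just a more quantitative rendering. The one detail you should add there is that the numerator $\Phi_{11}(1,s)+\Phi_{12}(1,s)\Pi(s)$ stays invertible in the limit; this holds because the limiting datum $-\Phi_{12}(0,s)^{-1}\Phi_{11}(0,s)$ still satisfies the strict right-hand inequality in \eqref{cond:pi-Q-exist}, so the limiting Riccati solution exists forward up to $t=1$ and Lemma~\ref{lem:phi-AB-pi-Q} applies on $[s,1]$.

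The genuine gap is the first limit, $\Sigma(1)\to 0_{n\times n}$, which you leave open and whose difficulty you misplace. The $0\cdot\infty$ indeterminacy you call the technical heart is manufactured by your own factorization $\Sigma(1)=\Phi_{A-BB\t\Pi}(1,s)\,\Omega(s)\,\Phi_{A-BB\t\Pi}(1,s)\t$: the inner term contains $\Phi_{A-BB\t\Pi}(s,\tau)=\Phi_{A-BB\t\Pi}(\tau,s)^{-1}$, which degenerates as $\tau\to 1$ in the limit, so that route should simply be abandoned. The paper argues directly on \eqref{sol:sigma-Q}: by continuity of the Riccati solution \eqref{sol:pi-Q} in the datum $\Pi(s)$, one has $\Pi(\tau)\to-\Phi_{12}(1,\tau)^{-1}\Phi_{11}(1,\tau)$ for every $\tau\in[0,1)$, hence by Lemma~\ref{lem:phi-AB-pi-Q} every matrix $\Phi_{A-BB\t\Pi}(1,\tau)=\Phi_{11}(1,\tau)+\Phi_{12}(1,\tau)\Pi(\tau)$ tends to zero pointwise, so both the $\Sigma_{0}$-term and the integrand of \eqref{sol:sigma-Q} converge to zero pointwise with no inversion and no blow-up anywhere, and the conclusion follows from the dominated convergence theorem. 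Your ``alternative'' route is exactly this argument, but you defer its two ingredients: the identification of the pointwise limit of $\Pi(\tau)$ (continuous dependence of the Riccati flow on $\Pi(s)$, together with \eqref{ineq:pi-Q-bd}), and the uniform bound on $\|\Phi_{A-BB\t\Pi}(1,\tau)\|$ near $\tau=1$ needed for domination --- the latter being the only genuinely delicate point (the paper is itself terse here, invoking dominated convergence right after the pointwise limit; a bound can be extracted, e.g., by sandwiching $\Pi(\tau)$ between a fixed admissible solution and $-\Phi_{12}(1,\tau)^{-1}\Phi_{11}(1,\tau)$, or by an optimality comparison with a fixed suboptimal control). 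As written, neither of your two routes establishes $\Sigma(1)\to 0_{n\times n}$, which is the substantive claim of the proposition.
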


\begin{proof}
It is clear that \eqref{sol:sigma-Q} is the solution to \eqref{ode:sigma-Q-simp}. 
The continuity of $\Pi(t)$ in $\Pi(s)$ for any given $s \in [0, 1]$ implies that, as $\Pi(s) \to - \Phi_{12}(1, s)^{-1} \Phi_{11}(1, s)$, then $\Pi(t) \to - \Phi_{12}(1, t)^{-1} \Phi_{11}(1, t)$ for each $t \in [0, 1]$. 
Next, it follows from the dominated convergence theorem~\cite{bass2013real} that, 
as $\Pi(t) \to - \Phi_{12}(1, t)^{-1} \Phi_{11}(1, t)$ for each $t \in [0, 1]$, then $\Sigma(1) \to 0_{n \times n}$. 
Similarly, as $\Pi(s) \to - \Phi_{12}(0, s)^{-1} \Phi_{11}(0, s)$, then $\Pi(t) \to - \Phi_{12}(0, t)^{-1} \Phi_{11}(0, t)$ for each $t \in [0, 1]$. 
When $\Sigma_{0} \succ 0$, as $\Pi(0) \to - \Phi_{12}(0, 0^{+})^{-1} \Phi_{11}(0, 0) = - \infty$, then $\Sigma(1) \to +\infty$. 
This completes the proof. \hfill
\end{proof}


For the special case when $\Sigma_{0} \succ 0$ and $C(t)D(t)C(t)\t = B(t)B(t)\t$ for all $t \in [0, 1]$, we can compute that
\begin{align*}
&\int_{0}^{1} \Phi_{A-BB\t\Pi}(1, \tau) B(\tau)B(\tau)\t \Phi_{A-BB\t\Pi}(1, \tau)\t \, \dd\tau \\
&= - \Phi_{A-BB\t\Pi}(1, \tau) \Phi_{12}(1, \tau)\t \big|_{1}^{0} \\
&= - \Phi_{11}(1, 0) \Phi_{12}(1, 0)\t - \Phi_{12}(1, 0) \Pi(0) \Phi_{12}(1, 0)\t. 
\end{align*}
For notational simplicity, we temporarily let $\Pi(0)$ be denoted by $\Pi_{0}$, let $\Phi_{11}(1, 0)$ be denoted by $\Phi_{11}^{10}$, and let $\Phi_{12}(1, 0)$ be denoted by $\Phi_{12}^{10}$. 
Then,
\begin{align*}
&\big(\Phi_{12}^{10}\big)^{-1} \Sigma_{1} \big(\Phi_{12}^{10}\big)^{- \mbox{\tiny\sf T}} 
= - \Pi_{0} - \big(\Phi_{12}^{10}\big)^{-1} \Phi_{11}^{10} 
\\* 
&\hspace{12mm} + \Big(\big(\Phi_{12}^{10}\big)^{-1} \Phi_{11}^{10} + \Pi_{0}\Big) \Sigma_{0} \Big(\big(\Phi_{11}^{10}\big)\t \big(\Phi_{12}^{10}\big)^{- \mbox{\tiny\sf T}} + \Pi_{0}\Big) \\
&= \Sigma_{0}^{-\frac{1}{2}} \! \left(\left[\Sigma_{0}^{\frac{1}{2}} \Big(\Pi_{0} + \big(\Phi_{12}^{10}\big)^{-1} \Phi_{11}^{10} \Big) \Sigma_{0}^{\frac{1}{2}} - \frac{I_{n}}{2}\right]^{2} - \frac{I_{n}}{4}\right) \! \Sigma_{0}^{-\frac{1}{2}}.
\end{align*}
Since from Lemma \ref{lem:pi-Q-exist-sol}, $\Pi_{0} + \big(\Phi_{12}^{10}\big)^{-1} \Phi_{11}^{10} \prec 0$, we have 
\begin{equation*}
\Sigma_{0}^{\frac{1}{2}} \Big(\Pi_{0} + \big(\Phi_{12}^{10}\big)^{-1} \Phi_{11}^{10} \Big) \Sigma_{0}^{\frac{1}{2}} - \frac{I_{n}}{2} \prec 0.
\end{equation*}
It follows that 
\begin{multline*}
\Sigma_{0}^{\frac{1}{2}} \Big(\Pi_{0} + \big(\Phi_{12}^{10}\big)^{-1} \Phi_{11}^{10} \Big) \Sigma_{0}^{\frac{1}{2}} - \frac{I_{n}}{2} 
= 
\\* 
- \bigg(\Sigma_{0}^{\frac{1}{2}} \big(\Phi_{12}^{10}\big)^{-1} \Sigma_{1} \big(\Phi_{12}^{10}\big)^{- \mbox{\tiny\sf T}} \Sigma_{0}^{\frac{1}{2}} + \frac{I_{n}}{4}\bigg)^{\frac{1}{2}}.
\end{multline*}
Therefore, given $\Sigma(1) = \Sigma_{1} \succ 0$, $\Pi(0)$ is unique and is given by
\begin{align*}
\Pi(0) 
&= - \big(\Phi_{12}^{10}\big)^{-1} \Phi_{11}^{10} + \frac{\Sigma_{0}^{-1}}{2} 
\\* 
&\hspace{5mm} - \Sigma_{0}^{-\frac{1}{2}} \left(\frac{I_{n}}{4} + \Sigma_{0}^{\frac{1}{2}} \big(\Phi_{12}^{10}\big)^{-1} \Sigma_{1} \big(\Phi_{12}^{10}\big)^{- \mbox{\tiny\sf T}} \Sigma_{0}^{\frac{1}{2}} \right)^{\frac{1}{2}} \Sigma_{0}^{-\frac{1}{2}},
\end{align*}
which is the same solution reported in \cite{chen2018III}.


Proposition \ref{prp:sigma1-Q} gives an explicit map from $\Pi(0)$ to $\Sigma(1)$ for the coupled matrix ODEs \eqref{ode:pi-Q-simp} and \eqref{ode:sigma-Q-simp} with $\Sigma(0) = \Sigma_{0} \succ 0$. 
Specifically, define 
\begin{multline*}
f: \big\{\Pi_{0} \in \mathbb{R}^{n \times n}: \, \Pi_{0} = \Pi_{0}\t \prec - \Phi_{12}(1, 0)^{-1} \Phi_{11}(1, 0)\big\} 
\\* 
\to \big\{\Sigma_{1} \in \mathbb{R}^{n \times n}: \, \Sigma_{1} = \Sigma_{1}\t \succ 0\big\},
\end{multline*}
such that
\begin{align} \label{map:pi0-sigma1-Q}
&f(\Pi_{0}) = \Big(\Phi_{11}(1, 0) + \Phi_{12}(1, 0) \Pi_{0}\Big) 
\nonumber \\* 
&\hspace{10mm} \times \bigg[ \Sigma_{0} + \int_{0}^{1} \Big(\Phi_{11}(s, 0) + \Phi_{12}(s, 0) \Pi_{0}\Big)^{-1} 
C_{s}D_{s}C_{s}\t 
\nonumber \\* 
&\hspace{30mm} \times \Big(\Phi_{11}(s, 0)\t + \Pi_{0} \Phi_{12}(s, 0)\t \Big)^{-1} \dd s \bigg] 
\nonumber \\* 
&\hspace{30mm} \times \Big(\Phi_{11}(1, 0)\t + \Pi_{0} \Phi_{12}(1, 0)\t \Big),
\end{align}
where $C_{s} \triangleq C(s)$ and $D_{s} \triangleq D(s)$. 
Similarly to \cite{liu2022add}, when $n > 1$, the map $f$ may not be monotone in the Loewner order; however, when $n = 1$, $f$ is monotonically decreasing. 
In view of Proposition \ref{prp:sigma1-Q}, we obtain the following result.

\begin{theorem} \label{thm:suff-sigma1-Q}
If there exists an $n \times n$ symmetric matrix $\Pi_{0} \prec - \Phi_{12}(1, 0)^{-1} \Phi_{11}(1, 0)$ such that the desired terminal state covariance $\Sigma_{1}$ can be written as $\Sigma_{1} = f(\Pi_{0})$, then, the optimal control law for system \eqref{sde:mult-jump-simp} is 
\begin{equation*}
u^{*}(t) = - B(t)\t\Pi(t)x(t),
\end{equation*}
where $\Pi(t)$ is the unique solution to \eqref{ode:pi-Q-simp} with $\Pi(0) = \Pi_{0}$.
\end{theorem}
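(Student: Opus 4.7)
The plan is to show that the hypothesis supplies precisely the data needed to invoke the earlier sufficient optimality condition, Theorem \ref{thm:suff-Q}. Given any symmetric $\Pi_{0} \prec -\Phi_{12}(1,0)^{-1}\Phi_{11}(1,0)$, I would first apply Lemma \ref{lem:pi-Q-exist-sol} with $s = 0$ to obtain a unique solution $\Pi(t)$ of the Riccati equation \eqref{ode:pi-Q-simp} on the whole interval $[0,1]$, given in closed form by \eqref{sol:pi-Q}. In particular, $\Pi(t)$ is continuous, so the candidate feedback $u^{*}(t) = -B(t)\t \Pi(t)x(t)$ is a well-defined, continuous linear state-feedback law on $[0,1]$.

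Next, I would plug this $\Pi(t)$ into the Lyapunov equation \eqref{ode:sigma-Q-simp} with initial condition $\Sigma(0) = \Sigma_{0}$. Proposition \ref{prp:sigma1-Q} yields the solution $\Sigma(t)$ in closed form via \eqref{sol:sigma-Q}, and evaluating at $t = 1$ together with the definition of $f$ in \eqref{map:pi0-sigma1-Q} gives $\Sigma(1) = f(\Pi_{0})$, which equals $\Sigma_{1}$ by hypothesis. Therefore both boundary conditions in \eqref{bdr:sigma-Q} are met, so the pair $(\Pi,\Sigma)$ satisfies the full system \eqref{ode:pi-Q-simp}, \eqref{ode:sigma-Q-simp}, \eqref{bdr:sigma-Q}.

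What remains is to verify that $u^{*} \in \mathcal{U}$. The closed-loop drift $A(t) - B(t)B(t)\t \Pi(t)$ and the noise coefficients are continuous, hence bounded on $[0,1]$, so \eqref{sde:mult-jump-opt} admits a strong solution; continuity of $\Sigma(t)$ on the compact interval makes $\mathbb{E}\|x^{*}(t)\|^{2} = \trace \Sigma(t)$ uniformly bounded, so the quadratic cost \eqref{cost-Q} is finite. Thus $u^{*}$ is admissible, and Theorem \ref{thm:suff-Q} delivers optimality at once.

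I do not expect any substantive obstacle, as the statement is essentially a packaging of three previously established facts: existence of $\Pi$ on $[0,1]$ under \eqref{cond:pi-Q-exist} (Lemma \ref{lem:pi-Q-exist-sol}), the explicit $\Pi_{0}\mapsto \Sigma_{1}$ map (Proposition \ref{prp:sigma1-Q}), and the sufficient condition for optimality (Theorem \ref{thm:suff-Q}). The only non-mechanical piece is the admissibility check, and that reduces to continuity of $\Pi(t)$ and $\Sigma(t)$ on $[0,1]$, which is immediate from the closed-form expressions \eqref{sol:pi-Q} and \eqref{sol:sigma-Q}.
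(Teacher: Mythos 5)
Your proposal is correct and follows the same route the paper takes: the theorem is obtained by combining Lemma \ref{lem:pi-Q-exist-sol} (existence of $\Pi$ on $[0,1]$ under $\Pi_{0} \prec -\Phi_{12}(1,0)^{-1}\Phi_{11}(1,0)$), Proposition \ref{prp:sigma1-Q} together with the definition of $f$ in \eqref{map:pi0-sigma1-Q} (so that $\Sigma(1)=f(\Pi_{0})=\Sigma_{1}$), and the sufficiency result of Theorem \ref{thm:suff-Q}. Your explicit admissibility check is a harmless elaboration of what the paper leaves implicit.
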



\subsection{Existence and Uniqueness of the Solution}

In this subsection we investigate the existence and uniqueness of the solution to the coupled matrix differential equations \eqref{bdr:sigma-Q}, \eqref{ode:pi-Q-simp}, \eqref{ode:sigma-Q-simp}.


To this end, let us compute the Jacobian of the map $f$ defined by \eqref{map:pi0-sigma1-Q}. 
For notational simplicity, let $\Phi_{11}(s, 0)$ be denoted by $\Phi_{11}^{s0}$, let $\Phi_{12}(s, 0)$ be denoted by $\Phi_{12}^{s0}$, and let the integrand in \eqref{map:pi0-sigma1-Q} be denoted by 
\begin{equation*}
P_{s} \triangleq 
\Big(\Phi_{11}^{s0} + \Phi_{12}^{s0} \Pi_{0}\Big)^{-1} 
C_{s}D_{s}C_{s}\t 
\Big(\big(\Phi_{11}^{s0}\big)\t + \Pi_{0} \big(\Phi_{12}^{s0}\big)\t \Big)^{-1} \! \succeq 0.
\end{equation*}
Let $\Delta \Pi_{0}$ denote a small increment in $\Pi_{0}$. 
Then, we can write \cite{henderson1981deriving}
\begin{align*}
&\Big(\Phi_{11}^{s0} + \Phi_{12}^{s0} \Pi_{0} + \Phi_{12}^{s0} \Delta \Pi_{0} \Big)^{-1} = \Big(\Phi_{11}^{s0} + \Phi_{12}^{s0} \Pi_{0} \Big)^{-1} 
\\* 
&\hspace{10mm} - \Big(\Phi_{11}^{s0} + \Phi_{12}^{s0} \Pi_{0} \Big)^{-1} \Phi_{12}^{s0} \Delta \Pi_{0} \Big(\Phi_{11}^{s0} + \Phi_{12}^{s0} \Pi_{0} \Big)^{-1} 
\\* 
&\hspace{10mm} + O\Big(\big\| \Delta \Pi_{0} \big\|^{2}\Big).
\end{align*}
After collecting all the first order terms of $\Delta \Pi_{0}$, we obtain
\begin{align*}
&f\big(\Pi_{0} + \Delta \Pi_{0}\big) - f\big(\Pi_{0}\big) = O\Big(\big\| \Delta \Pi_{0} \big\|^{2}\Big) 
\\* 
&\hspace{6mm} + \Phi_{12}^{10} \Delta \Pi_{0} \bigg[\Sigma_{0} + \int_{0}^{1} P_{s} \, \dd s\bigg] \Big(\big(\Phi_{11}^{10}\big)\t + \Pi_{0} \big(\Phi_{12}^{10}\big)\t \Big) 
\\* 
&\hspace{6mm} + \Big(\Phi_{11}^{10} + \Phi_{12}^{10} \Pi_{0} \Big) \bigg[\Sigma_{0} + \int_{0}^{1} P_{s} \, \dd s\bigg] \Delta \Pi_{0} \big(\Phi_{12}^{10}\big)\t 
\\* 
&\hspace{6mm} - \Big(\Phi_{11}^{10} + \Phi_{12}^{10} \Pi_{0} \Big) \bigg[\int_{0}^{1} \bigg( \Big(\Phi_{11}^{s0} + \Phi_{12}^{s0} \Pi_{0} \Big)^{-1} \Phi_{12}^{s0} \Delta \Pi_{0} P_{s} 
\\* 
&\hspace{18mm} + P_{s} \Delta \Pi_{0} \big(\Phi_{12}^{s0}\big)\t \Big(\big(\Phi_{11}^{s0}\big)\t + \Pi_{0} \big(\Phi_{12}^{s0}\big)\t  \Big)^{-1} \bigg) \, \dd s \bigg] 
\\* 
&\hspace{45mm} \times \Big(\big(\Phi_{11}^{10}\big)\t + \Pi_{0} \big(\Phi_{12}^{10}\big)\t \Big).
\end{align*}

For notational simplicity, let
\begin{equation*}
\Phi_{\Pi}^{10} \triangleq \Phi_{A-BB\t\Pi}(1, 0) = \Phi_{11}^{10} + \Phi_{12}^{10} \Pi_{0},
\end{equation*}
and
\begin{align*}
W_{s0} \triangleq 
\begin{cases}
\Big(\big(\Phi_{12}^{s0}\big)^{-1} \Phi_{11}^{s0} + \Pi_{0}\Big)^{-1} \prec 0, & s \in (0, 1]. 
\\* 
0_{n \times n}, & s = 0.
\end{cases}
\end{align*}
Then, we have
\begin{align} \label{eqn:map-diff-Q}
& f\big(\Pi_{0} + \Delta \Pi_{0}\big) - f\big(\Pi_{0}\big) = O\Big(\big\| \Delta \Pi_{0} \big\|^{2}\Big) 
\nonumber \\* 
& + \Phi_{\Pi}^{10} \bigg[ W_{10} \Delta \Pi_{0} \Sigma_{0} + \Sigma_{0} \Delta \Pi_{0} W_{10} 
\nonumber \\* 
& + \int_{0}^{1} \Big(W_{10} - W_{s0}\Big) \Delta \Pi_{0} P_{s} + P_{s} \Delta \Pi_{0} \Big(W_{10} - W_{s0}\Big) \, \dd s \bigg] 
\nonumber \\* 
&\hspace{62mm} \times \big(\Phi_{\Pi}^{10}\big)\t.
\end{align}


In the sequel, let $\otimes$ denote the Kronecker product. 
Given an $n \times n$ matrix $H = [h_{ij}]$, its vectorized version is
\begin{equation*}
\vect(H) \triangleq [h_{11} \enspace \dots \enspace h_{n1} \enspace h_{12} \enspace \dots \enspace h_{n2} \enspace \dots \enspace h_{1n} \enspace \dots \enspace h_{nn}]\t.
\end{equation*}
Define the map $\bar{f}: \big\{\vect(\Pi_{0}) \in \R^{n^{2}}: \, \Pi_{0} = \Pi_{0}\t \prec - \Phi_{12}(1, 0)^{-1} \Phi_{11}(1, 0)\big\} \to \big\{\vect(\Sigma_{1}) \in \R^{n^{2}}: \, \Sigma_{1} = \Sigma_{1}\t \succ 0\big\}$ such that 
\begin{equation*} \label{map:pi0-sigma1-Q-vec}
\bar{f}\big(\vect(\Pi_{0})\big) = \vect\big(f(\Pi_{0})\big),
\end{equation*}
where $f$ is defined by \eqref{map:pi0-sigma1-Q}.


It follows from vectorizing both sides of \eqref{eqn:map-diff-Q} that 
\begin{multline*}
\bar{f}\big(\vect(\Pi_{0}) + \vect(\Delta \Pi_{0})\big) - \bar{f}\big(\vect(\Pi_{0})\big) = 
\\* 
\partial \bar{f}\big(\vect(\Pi_{0})\big) \vect\big(\Delta \Pi_{0}\big) 
+ O\Big(\big\| \Delta \Pi_{0} \big\|^{2}\Big),
\end{multline*}
where,
\begin{multline} \label{eqn:map-jacob-Q}
\partial \bar{f}\big(\vect(\Pi_{0})\big) = \Phi_{\Pi}^{10} \otimes \Phi_{\Pi}^{10} \bigg[ \Sigma_{0} \otimes W_{10} + W_{10} \otimes \Sigma_{0} 
\\* 
+ \int_{0}^{1} P_{s} \otimes \big(W_{10} - W_{s0}\big) + \big(W_{10} - W_{s0}\big) \otimes P_{s} \, \dd s \bigg].
\end{multline}
Thus, $\partial \bar{f}\big(\vect(\Pi_{0})\big)$ is the Jacobian of the map $\bar{f}$ at $\vect(\Pi_{0})$.


\begin{lemma} \label{lem:map-inv-Q}
For any given $\Sigma_{0} \succ 0$, the map $f$ defined by \eqref{map:pi0-sigma1-Q} is a homeomorphism. 
Thus, for any $\Sigma_{1} \succ 0$, there exists a unique $\Pi_{0} \prec - \Phi_{12}(1, 0)^{-1} \Phi_{11}(1, 0)$ such that $\Sigma_{1} = f(\Pi_{0})$.
\end{lemma}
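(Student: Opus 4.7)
The plan is to invoke the Hadamard global inverse function theorem: a proper local $C^{1}$-diffeomorphism between two connected, simply connected smooth manifolds of the same dimension is a diffeomorphism. Both the domain $\mathcal{D}=\{\Pi_{0}=\Pi_{0}\t\prec -\Phi_{12}(1,0)^{-1}\Phi_{11}(1,0)\}$ and the codomain $\{\Sigma_{1}=\Sigma_{1}\t\succ 0\}$ are convex open subsets of the $\tfrac{n(n+1)}{2}$-dimensional space of $n\times n$ symmetric matrices, hence connected and simply connected of the same dimension.

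First I would show that the Jacobian in \eqref{eqn:map-jacob-Q} is nonsingular on $\mathcal{D}$. The outer factor $\Phi_{\Pi}^{10}\otimes\Phi_{\Pi}^{10}$ is invertible because, by Lemma~\ref{lem:phi-AB-pi-Q}, $\Phi_{\Pi}^{10}$ is a state transition matrix. For the bracketed operator, equation~\eqref{eqn:phi-blk-12} implies that $\Phi_{12}(s,0)^{-1}\Phi_{11}(s,0)$ is symmetric, so each $W_{s0}$ is symmetric, and the hypothesis on $\Pi_{0}$ yields $W_{10}\prec 0$. By Lemma~\ref{lem:phi-inv-mono}, the map $s\mapsto -\Phi_{11}(s,0)^{-1}\Phi_{12}(s,0)$ is strictly Loewner-increasing on $(0,1]$; inverting gives $-\Phi_{12}(s,0)^{-1}\Phi_{11}(s,0)\succ -\Phi_{12}(1,0)^{-1}\Phi_{11}(1,0)\succ \Pi_{0}$ for $s\in(0,1)$, and a second inversion yields $W_{s0}-W_{10}\succeq 0$ on $[0,1]$. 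A congruence by $\Sigma_{0}^{1/2}\otimes\Sigma_{0}^{1/2}$ shows that $\Sigma_{0}\otimes W_{10}+W_{10}\otimes\Sigma_{0}$ has the same inertia as $I_{n}\otimes\tilde W+\tilde W\otimes I_{n}$ with $\tilde W=\Sigma_{0}^{-1/2}W_{10}\Sigma_{0}^{-1/2}\prec 0$, whose eigenvalues $\tilde w_{i}+\tilde w_{j}$ are all negative, so this term is strictly negative definite. A Kronecker product of a PSD and an NSD symmetric matrix has all nonpositive eigenvalues, so $P_{s}\otimes(W_{10}-W_{s0})+(W_{10}-W_{s0})\otimes P_{s}\preceq 0$, and the integrated contribution is negative semidefinite. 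The bracket is therefore strictly negative definite, in particular nonsingular. Since every summand in \eqref{eqn:map-jacob-Q} preserves the $\tfrac{n(n+1)}{2}$-dimensional subspace of vectorized symmetric matrices, the restriction of $\partial\bar f$ to that subspace is also a bijection, and the inverse function theorem makes $f$ a local diffeomorphism.

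The main obstacle is to establish that $f$ is proper. Consider a sequence $\{\Pi_{0}^{(k)}\}\subset\mathcal{D}$ with no subsequential limit in $\mathcal{D}$. Passing to a subsequence, either (i) some eigenvalue of $\Pi_{0}^{(k)}$ diverges to $-\infty$, or (ii) $\Pi_{0}^{(k)}\to\Pi_{0}^{\ast}$ with $-\Phi_{12}(1,0)^{-1}\Phi_{11}(1,0)-\Pi_{0}^{\ast}\succeq 0$ but singular. In case (i), the term $\Phi_{\Pi}^{10}\Sigma_{0}(\Phi_{\Pi}^{10})\t$ in \eqref{map:pi0-sigma1-Q} has an eigenvalue tending to $+\infty$ while the remaining integral contribution is positive semidefinite, so $f(\Pi_{0}^{(k)})$ leaves every compact subset of the PD cone. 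Case (ii) is handled by mirroring the proof of Proposition~\ref{prp:sigma1-Q}: using the continuity of $\Pi(t)$ in $\Pi(s)$ together with a dominated-convergence argument applied to \eqref{sol:sigma-Q}, the fact that $\Phi_{\Pi}^{10}(\Pi_{0}^{\ast})$ becomes singular along the directions in which $-\Phi_{12}(1,0)^{-1}\Phi_{11}(1,0)-\Pi_{0}^{\ast}$ degenerates forces some eigenvalue of $\Sigma_{1}^{(k)}$ to zero. With properness established, Hadamard's theorem delivers that $f$ is a diffeomorphism, hence a homeomorphism, and the existence and uniqueness of $\Pi_{0}$ for every $\Sigma_{1}\succ 0$ follow immediately.
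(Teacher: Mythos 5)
Your overall route coincides with the paper's: show the Jacobian \eqref{eqn:map-jacob-Q} is nonsingular on the domain, show $f$ is proper, and invoke Hadamard's global inverse function theorem on the (convex, hence simply connected) cones of symmetric matrices. Your Jacobian step is correct and is a legitimate variant of the paper's: the paper proves the bracketed operator is negative definite by evaluating the quadratic form $\vect(X)\t S \vect(X)$ through trace identities, while you reach the same conclusion via a congruence plus the Kronecker-sum eigenvalue fact for the $\Sigma_{0}$ term and the eigenvalue signs of Kronecker products for the integrand; both arguments rest on exactly the facts you cite ($W_{10} \prec 0$, $W_{10} - W_{s0} \preceq 0$, $P_{s} \succeq 0$), and your restriction to the symmetric subspace is fine. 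Your properness case (i) (unbounded $\Pi_{0}^{(k)}$ forces $\Phi_{\Pi}^{10}\Sigma_{0}\big(\Phi_{\Pi}^{10}\big)\t$, hence $f$, to blow up) is in substance the paper's boundedness argument, which the paper combines with closedness of $f^{-1}(\mathcal{K})$ as the preimage of a closed set.

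The gap is in your case (ii). Proposition \ref{prp:sigma1-Q} covers only the full degeneration $\Pi(s) \to -\Phi_{12}(1, s)^{-1}\Phi_{11}(1, s)$, where the whole closed-loop transition matrix collapses and $\Sigma(1) \to 0$; it does not cover a finite boundary point $\Pi_{0}^{*}$ at which $-\Phi_{12}(1,0)^{-1}\Phi_{11}(1,0) - \Pi_{0}^{*}$ is merely singular. There the outer factor $\Phi_{11}^{10} + \Phi_{12}^{10}\Pi_{0}^{*}$ in \eqref{map:pi0-sigma1-Q} is singular, but the bracket $\Sigma_{0} + \int_{0}^{1} P_{s}\,\dd s$ need not remain bounded as $\Pi_{0} \to \Pi_{0}^{*}$, since $\big(\Phi_{11}^{s0} + \Phi_{12}^{s0}\Pi_{0}^{*}\big)^{-1}$ blows up along the degenerate directions as $s \to 1^{-}$; so the inference ``singular outer factor $\Rightarrow$ an eigenvalue of $f\big(\Pi_{0}^{(k)}\big)$ tends to $0$'' is a $0\cdot\infty$ assertion that your sketch does not resolve. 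To close it you would need something like: fix a unit vector $v$ orthogonal to $\range\big(\Phi_{11}^{10} + \Phi_{12}^{10}\Pi_{0}^{*}\big)$, use the cocycle property $\Phi_{A-BB\t\Pi}(1, s) = \Phi_{A-BB\t\Pi}(1, 0)\,\Phi_{A-BB\t\Pi}(0, s)$ together with continuity of the Riccati flow in its initial value to conclude $v\t\Phi_{A-BB\t\Pi^{(k)}}(1, s) \to 0$ for every $s \in [0, 1)$, and then justify the passage to the limit under the integral in \eqref{sol:sigma-Q}; pointwise convergence of the integrand is easy, but a dominating function uniform in $k$ is not, because the $k$-independent bounds on $\Pi^{(k)}(s)$ from \eqref{ineq:pi-Q-bd} blow up as $s \to 1^{-}$, so this last step requires a genuine uniform estimate rather than an appeal to ``mirroring'' Proposition \ref{prp:sigma1-Q}. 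In short, the approach and the Jacobian computation match the paper, but the boundary-accumulation half of properness, which you rightly single out, is asserted rather than proved.
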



The proof of Lemma \ref{lem:map-inv-Q} is in the Appendix. 
An immediate result of Lemma \ref{lem:map-inv-Q} and Theorem \ref{thm:suff-sigma1-Q} is the following.

\begin{theorem} \label{thm:exist-unique-Q}
Let $\Sigma_{0}, \Sigma_{1} \succ 0$. 
The unique optimal control law that solves the covariance steering problem for system \eqref{sde:mult-jump-simp} is given by \eqref{ctrl:opt-Q}, where $\Pi(t)$ is the unique solution to \eqref{ode:pi-Q-simp}, \eqref{ode:sigma-Q-simp}, \eqref{bdr:sigma-Q}.
\end{theorem}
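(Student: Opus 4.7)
The plan is to assemble the ingredients built earlier in Section \ref{sec:soltn-nd} in sequence. Given $\Sigma_{0},\Sigma_{1}\succ 0$, I would first invoke Lemma \ref{lem:map-inv-Q} to obtain the unique symmetric $\Pi_{0}\prec -\Phi_{12}(1,0)^{-1}\Phi_{11}(1,0)$ with $f(\Pi_{0})=\Sigma_{1}$. Because $-\Phi_{12}(0,0^{+})^{-1}\Phi_{11}(0,0)=-\infty$, condition \eqref{cond:pi-Q-exist} at $s=0$ is automatically satisfied, so Lemma \ref{lem:pi-Q-exist-sol} produces a unique $\Pi(t)$ on $[0,1]$ solving \eqref{ode:pi-Q-simp} with $\Pi(0)=\Pi_{0}$. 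Plugging this $\Pi(t)$ into the linear Lyapunov equation \eqref{ode:sigma-Q-simp} with $\Sigma(0)=\Sigma_{0}$ yields a unique $\Sigma(t)$, and Proposition \ref{prp:sigma1-Q} forces $\Sigma(1)=f(\Pi_{0})=\Sigma_{1}$, so both conditions in \eqref{bdr:sigma-Q} hold. Theorem \ref{thm:suff-Q} then certifies that $u^{*}(t)=-R^{-1}(t)B(t)\t\Pi(t)x^{*}(t)$ is optimal for \eqref{sde:mult-jump-simp} with cost \eqref{cost-Q}.

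Uniqueness would be argued on two levels. At the level of the ODE system, any triple $(\Pi,\Sigma)$ satisfying \eqref{ode:pi-Q-simp}, \eqref{ode:sigma-Q-simp}, and \eqref{bdr:sigma-Q} must have $\Pi(0)$ in the admissible preimage $f^{-1}(\Sigma_{1})$ by Proposition \ref{prp:sigma1-Q}, which is a singleton by Lemma \ref{lem:map-inv-Q}; uniqueness of $\Pi$ on $[0,1]$ then follows from the uniqueness clause of Lemma \ref{lem:pi-Q-exist-sol}, and uniqueness of $\Sigma$ from the linearity of the Lyapunov flow. At the level of controls, I would reuse the completion-of-squares identity \eqref{cost:sqr-Q} with this uniquely determined $\Pi$: for every admissible $u$ with state process $x^{u}$, \eqref{cost:sqr-Q} represents $\tilde{J}(u)$ as the sum of a non-negative quadratic in $R^{1/2}(t)u(t)+R^{-1/2}(t)B(t)\t\Pi(t)x^{u}(t)$ and a $u$-independent trace integral, while $\tilde{J}-J$ equals a constant depending only on $\Sigma_{0},\Sigma_{1}$. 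Hence any minimizer of $J$ must zero out the quadratic almost everywhere, forcing $u(t)=-R^{-1}(t)B(t)\t\Pi(t)x^{u}(t)$ almost surely for a.e.\ $t$. Substituting this back into \eqref{sde:mult-jump-simp} produces the closed-loop SDE \eqref{sde:mult-jump-opt}, whose pathwise unique strong solution coincides with $x^{*}$, so $u\equiv u^{*}$.

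The principal obstacle has already been dispatched in Lemma \ref{lem:map-inv-Q}, whose proof establishes $f$ as a homeomorphism on the relevant order interval; the present theorem is essentially a packaging step that routes that analytic invertibility result into an optimal control statement. The only additional care required is to verify that the pathwise uniqueness invoked in the last step is legitimate under the admissibility notion of Section \ref{sec:prob}, which follows from the Lipschitz and boundedness estimates implied by continuity of $A$, $B$, $R^{-1}$, and $\Pi$ on the compact interval $[0,1]$ together with the linearity of \eqref{sde:mult-jump-opt} in $x^{*}$.
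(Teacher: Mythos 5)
Your proposal is correct and follows essentially the same route as the paper, which obtains the theorem as an immediate consequence of Lemma \ref{lem:map-inv-Q} (the homeomorphism property of $f$, giving the unique admissible $\Pi_{0}$ with $f(\Pi_{0})=\Sigma_{1}$) combined with Theorem \ref{thm:suff-sigma1-Q} and the earlier completion-of-squares observation following \eqref{cost:sqr-Q} for uniqueness of the control. You simply spell out the chain (Lemma \ref{lem:pi-Q-exist-sol}, Proposition \ref{prp:sigma1-Q}, pathwise uniqueness of the closed-loop SDE) that the paper leaves implicit.
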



\section{Numerical Example} \label{sec:expl}

Generally speaking, we do not have a closed-form solution to the coupled matrix ODEs \eqref{ode:pi-Q}, \eqref{ode:sigma-Q}, \eqref{bdr:sigma-Q}, and thus of the optimal control law. 
Nevertheless, we can convert the optimal covariance steering problem into a semidefinite program and solve it numerically \cite[Section IV.A]{chen2016II}. 
When $E_{i}(t) \equiv I_{n}$ for all $i \in \{1, 2, \dots, \ell\}$, the coupled matrix ODEs \eqref{bdr:sigma-Q}, \eqref{ode:pi-Q-simp}, \eqref{ode:sigma-Q-simp} can be reduced to the case when $\nu(t) \equiv 0$ by letting $\bar{A}(t) \triangleq A(t) + \nu(t) I_{n}$. 
Thus, we can adopt numerical methods similar to those in \cite[Section VII.A]{liu2022add} to compute the optimal control.

We illustrate the results of the theory using the following example. 
Consider the linear stochastic system
\begin{equation*}
\dd x(t) = A x(t) \, \dd t + B u(t) \, \dd t + C \, \dd h(t) + x(t) \, \dd w(t),
\end{equation*}
where
\begin{equation*}
A = 
\begin{bmatrix}
-2 & 1 \\
0  & 0
\end{bmatrix}
, \quad
B = 
\begin{bmatrix}
0 \\
1
\end{bmatrix}
, \quad
C = 
\begin{bmatrix}
1 \\
0
\end{bmatrix},
\end{equation*}
$w(t)$ is a Wiener process, and $h(t)$ is a nonhomogeneous compound Poisson process with arrival rate $\lambda(t) = 3 + t$ and i.i.d. jump size $\chi \sim \mathcal{N}(0, 0.5^{2})$. 
Suppose the initial state is distributed according to $x(0) \sim \mathcal{N}\big(\left[0 \enspace 0\right]\t, \Sigma_{0}\big)$, where $\Sigma_{0} = \diag [1, 1]$. 
The objective is to reach a final state with zero mean and covariance $\Sigma_{1} = \diag [0.3, 0.2]$ over the interval $[0, 1]$, while minimizing the cost
\begin{equation*}
J(u) = \mathbb{E}\left[\int_{0}^{1} \Big(u^{2}(t) + x(t)\t \diag [1, 0] \, x(t)\Big) \, \dd t\right]. 
\end{equation*}
Figure \ref{fig:expl-3} illustrates ten controlled sample paths of the state $x(t) = \left[x_{1}(t) \enspace x_{2}(t)\right]\t$. 
The ``three standard deviation'' tolerance region on $[0, 1]$ is depicted as the transparent blue tube.
\begin{figure}[!h]
    \centering
    \includegraphics[width=0.48\textwidth]{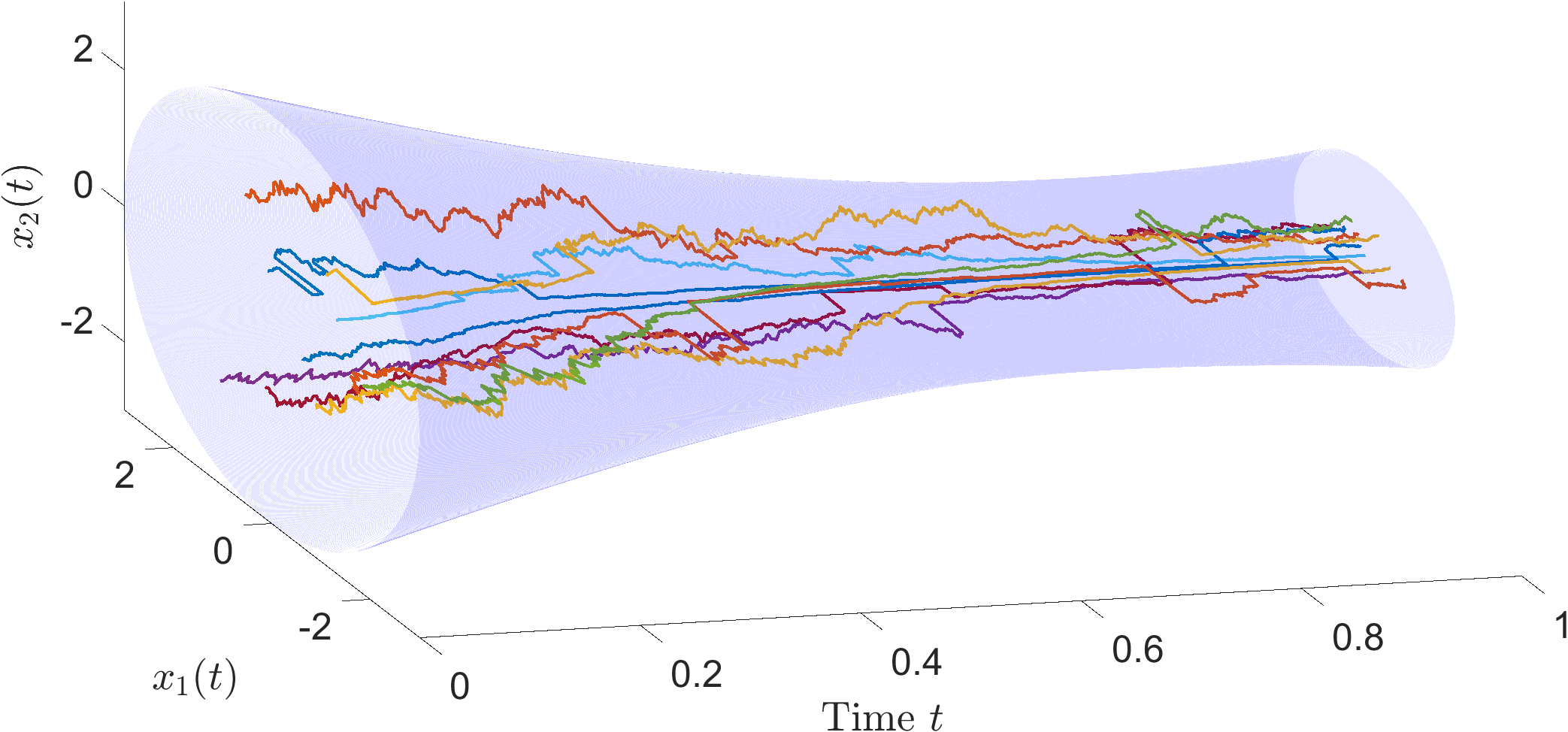}
    \caption{Sample paths.}
    \label{fig:expl-3}
\end{figure}


\section{Concluding Remarks}

In this paper, we derive the optimal control law for steering the state covariance of a linear time-varying stochastic system corrupted by both additive and multiplicative generic noise (possibly with jumps of any size). 
The cost functional is quadratic in both the state and control input. 
A necessary and sufficient condition for the existence of the solution to a matrix Riccati differential equation is established, provided that the linear time-varying system is totally controllable in the absence of noise. 
When the initial or the desired terminal state mean is nonzero, following the same procedure as in \cite{liu2022disc}, it is not difficult to show that 
the presence of multiplicative noise will result in the dependence of the optimal covariance steering on the optimal mean steering, while the presence of additive noise only will not. 
A potential future research direction would be to impose chance constraints along the sample paths and to optimize over the chance-constrained paths.


\bibliographystyle{IEEEtran}
\bibliography{TAC-conv-contr-mult-v8}


\section*{Appendix}

\subsection{Properties of the State Transition Matrix}

In Table \ref{tbl:compare}, we summarize a list of properties of the state transition matrix $\Phi_{M}$ (see \eqref{eqn:phi-M-blk}) for the case when $Q(t) \succeq 0$, and compare the properties with the case when $Q(t) \equiv 0$. 
The properties that require $\big(A(t), B(t)\big)$ to be totally controllable are shown in shaded color in the table. 
Note that $\mathcal{I}_{s} \subset \mathbb{R}$ is the maximal interval of existence of $\Pi(t)$, which is the solution to 
\begin{equation*}
\dot{\Pi} = - A(t)\t \Pi - \Pi A(t) + \Pi B(t) B(t)\t \Pi - Q(t),
\end{equation*}
starting from $\Pi(s) = \Pi_s$.
\begin{table*}[!t] 
\setlength\tabcolsep{6pt} 
\caption{Comparison between the cases when $Q(t) \succeq 0$ and $Q(t) \equiv 0$, assuming $\big(A(t), B(t)\big)$ is totally controllable.}
\label{tbl:compare}
\begin{center}
\begin{tabular}{ |c|c| }
\hline
Matrices With $Q(t) \succeq 0$ & Matrices With $Q(t) \equiv 0$ \\
\hline
$\Phi_{11}(t, s)$ & $\Phi_{A}(t, s)$ \\
$\Phi_{12}(t, s)$ & $- \Phi_{A}(t, s) N(t, s)$ \\
$\Phi_{21}(t, s)$ & $0_{n \times n}$ \\
$\Phi_{22}(t, s)$ & $\Phi_{A}(s, t)\t$ \\
$- \Phi_{11}(t, s)^{-1} \Phi_{12}(t, s)$ & $N(t, s) \triangleq \int_{s}^{t} \Phi_{A}(s, \tau) B(\tau) R(\tau)^{-1} B(\tau)\t \Phi_{A}(s, \tau)\t \, \dd \tau$ \\
$- \Phi_{12}(t, s) \Phi_{22}(t, s)^{-1}$ & $M(t, s) \triangleq \int_{s}^{t} \Phi_{A}(t, \tau) B(\tau) R(\tau)^{-1} B(\tau)\t \Phi_{A}(t, \tau)\t \, \dd \tau$ \\
\hline
\hline
Properties With $Q(t) \succeq 0$ & Properties With $Q(t) \equiv 0$ \\
\hline
$\Phi_{11}(t, s)$ and $\Phi_{22}(t, s)$ are invertible for $t, s \in \mathbb{R}$ & $\Phi_{A}(t, s)$ is invertible for $t, s \in \mathbb{R}$ \\
\rowcolor{myblue}
$\Phi_{12}(t, s)$ is invertible for $t \neq s$ & $- \Phi_{A}(t, s) N(t, s)$ is invertible for $t \neq s$ \\
$\Phi_{11}(t, s) = \Phi_{22}(s, t)\t$,~~ $t, s \in \mathbb{R}$ & $\Phi_{A}(t, s) = \Phi_{A}(t, s)$ \\
$\Phi_{12}(t, s) = - \Phi_{12}(s, t)\t$,~~ $t, s \in \mathbb{R}$ & $M(t, s) = - N(s, t)$,~~ $t, s \in \mathbb{R}$ \\
$\Phi_{21}(t, s) = - \Phi_{21}(s, t)\t$,~~ $t, s \in \mathbb{R}$ & $0_{n \times n} = 0_{n \times n}$ \\
$\Phi_{11}(t, s) \Phi_{12}(s, t) = - \Phi_{12}(t, s) \Phi_{22}(s, t)$,~~ $t, s \in \mathbb{R}$ & $M(t, s) = \Phi_{A}(t, s) N(t, s) \Phi_{A}(t, s)\t$,~~ $t, s \in \mathbb{R}$ \\
$\Phi_{21}(t, s) \Phi_{11}(s, t) = - \Phi_{22}(t, s) \Phi_{21}(s, t)$,~~ $t, s \in \mathbb{R}$ & $0_{n \times n} = 0_{n \times n}$ \\
$\Phi_{11}(t, s) \Phi_{11}(s, t) + \Phi_{12}(t, s) \Phi_{21}(s, t) = I_{n}$,~~ $t, s \in \mathbb{R}$ & $\Phi_{A}(t, s) \Phi_{A}(s, t) = I_{n}$,~~ $t, s \in \mathbb{R}$ \\
$\Phi_{21}(t, s) \Phi_{12}(s, t) + \Phi_{22}(t, s) \Phi_{22}(s, t) = I_{n}$,~~ $t, s \in \mathbb{R}$ & $\Phi_{A}(s, t)\t \Phi_{A}(t, s)\t = I_{n}$,~~ $t, s \in \mathbb{R}$ \\
$\Phi_{12}(t, s)\t \Phi_{22}(t, s) = \Phi_{22}(t, s)\t \Phi_{12}(t, s)$,~~ $t, s \in \mathbb{R}$ & $- N(t, s) = - N(t, s)$ \\
$\Phi_{21}(t, s)\t \Phi_{11}(t, s) = \Phi_{11}(t, s)\t \Phi_{21}(t, s)$,~~ $t, s \in \mathbb{R}$ & $0_{n \times n} = 0_{n \times n}$ \\
$\Phi_{12}(t, s) \Phi_{11}(t, s)\t = \Phi_{11}(t, s) \Phi_{12}(t, s)\t$,~~ $t, s \in \mathbb{R}$ & $- M(t, s) = - M(t, s)$ \\
$\Phi_{21}(t, s) \Phi_{22}(t, s)\t = \Phi_{22}(t, s) \Phi_{21}(t, s)\t$,~~ $t, s \in \mathbb{R}$ & $0_{n \times n} = 0_{n \times n}$ \\
$\Phi_{11}(t, s)\t \Phi_{22}(t, s) - \Phi_{21}(t, s)\t \Phi_{12}(t, s) = I_{n}$,~~ $t, s \in \mathbb{R}$ & $\Phi_{A}(t, s)\t \Phi_{A}(s, t)\t = I_{n}$,~~ $t, s \in \mathbb{R}$ \\
$\Phi_{11}(t, s) \Phi_{22}(t, s)\t - \Phi_{12}(t, s) \Phi_{21}(t, s)\t = I_{n}$,~~ $t, s \in \mathbb{R}$ & $\Phi_{A}(t, s) \Phi_{A}(s, t) = I_{n}$,~~ $t, s \in \mathbb{R}$ \\
\rowcolor{myblue}
$0 \prec - \Phi_{11}(t_{1}, s)^{-1} \Phi_{12}(t_{1}, s) \prec - \Phi_{11}(t_{2}, s)^{-1} \Phi_{12}(t_{2}, s)$ for $s < t_{1} < t_{2}$ & $0 \prec N(t_{1}, s) \prec N(t_{2}, s)$ for $s < t_{1} < t_{2}$ \\
\rowcolor{myblue}
$- \Phi_{11}(t_{1}, s)^{-1} \Phi_{12}(t_{1}, s) \prec - \Phi_{11}(t_{2}, s)^{-1} \Phi_{12}(t_{2}, s) \prec 0$ for $t_{1} < t_{2} < s$ & $N(t_{1}, s) \prec N(t_{2}, s) \prec 0$ for $t_{1} < t_{2} < s$ \\
\rowcolor{myblue}
$- \Phi_{12}(t, s_{1}) \Phi_{22}(t, s_{1})^{-1} \succ - \Phi_{12}(t, s_{2}) \Phi_{22}(t, s_{2})^{-1} \succ 0$ for $s_{1} < s_{2} < t$ & $M(t, s_{1}) \succ M(t, s_{2}) \succ 0$ for $s_{1} < s_{2} < t$ \\
\rowcolor{myblue}
$0 \succ - \Phi_{12}(t, s_{1}) \Phi_{22}(t, s_{1})^{-1} \succ - \Phi_{12}(t, s_{2}) \Phi_{22}(t, s_{2})^{-1}$ for $t < s_{1} < s_{2}$ & $0 \succ M(t, s_{1}) \succ M(t, s_{2})$ for $t < s_{1} < s_{2}$ \\
$\Phi_{A-BR^{-1}B\t\Pi}(t, s) = \Phi_{11}(t, s) + \Phi_{12}(t, s) \Pi(s)$,~~ $t \in \mathcal{I}_{s}$ & $\Phi_{A-BR^{-1}B\t\Pi}(t, s) = \Phi_{A}(t, s) - \Phi_{A}(t, s)N(t, s)\Pi(s)$,~~ $t \in \mathcal{I}_{s}$ \\
\hline
$\Phi_{12}(t, s) = - \Phi_{A-BR^{-1}B\t\Pi}(t, s) \bar{N}(t, s)$,~~ $t, s \in \mathbb{R}$, where & $\Phi_{A}(t, s) N(t, s) = \Phi_{A-BR^{-1}B\t\Pi}(t, s) \bar{N}(t, s)$,~~ $t, s \in \mathbb{R}$, where \\
\multicolumn{2}{|c|}{$\bar{N}(t, s) \triangleq \int_{s}^{t} \Phi_{A-BR^{-1}B\t\Pi}(s, \tau) B(\tau) R(\tau)^{-1} B(\tau)\t \Phi_{A-BR^{-1}B\t\Pi}(s, \tau)\t \, \dd \tau$} \\
\hline
\rowcolor{myblue}
$\mathcal{I}_{s} = (t_{0}, t_{1})$, where & $\mathcal{I}_{s} = (t_{0}, t_{1})$, where \\
\rowcolor{myblue}
$t_{0} = \inf \big\{t: \, t < s,~ - \Phi_{12}(t, s)^{-1} \Phi_{11}(t, s) \prec \Pi(s)\big\}$ & $t_{0} = \inf \big\{t: \, t < s,~ N(t, s)^{-1} \prec \Pi(s)\big\}$ \\
\rowcolor{myblue}
$t_{1} = \sup \big\{t: \, t > s,~ - \Phi_{12}(t, s)^{-1} \Phi_{11}(t, s) \succ \Pi(s)\big\}$ & $t_{1} = \sup \big\{t: \, t > s,~ N(t, s)^{-1} \succ \Pi(s)\big\}$ \\
\hline
\end{tabular}
\end{center}
\end{table*}


\subsection{Proof of Proposition \ref{prp:U}}

We need the following result to prove Proposition \ref{prp:U}.

\begin{lemma} \label{lem:u-bd}
Let $G$ and $H$ be given integers such that $G \geq H \geq 0$, 
let $\gamma, \alpha_{i}, \beta_{i} \in \mathbb{R}$, where $i \in \{0, 1, \dots, H\}$, 
and let $f \in \mathcal{C}^{G}$ be such that $f(t) > 0$ on $[0, 1]$. 
Let also $\rho \in \mathcal{C}^{0}$ be such that $\rho(0) < 0$ and $\rho(1) < \gamma$. 
There exists real-valued $u \in \mathcal{C}^{G}$ such that 
$\int_{0}^{1} f(\tau) u(\tau) \, \dd \tau = \gamma$ and 
$\int_{0}^{t} f(\tau)u(\tau) \, \dd \tau > \rho(t)$ for all $t \in [0,1]$, 
with boundary conditions $u(0) = \alpha_{0}$, $u(1) = \beta_{0}$, $\frac{\dd^{i} u}{\dd t^{i}}(0) = \alpha_{i}$, and $\frac{\dd^{i} u}{\dd t^{i}}(1) = \beta_{i}$, where $i \in \{1, \dots, H\}$. 
\end{lemma}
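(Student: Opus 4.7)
The plan is to pass to the primitive $F(t) \triangleq \int_0^t f(\tau)u(\tau)\,\dd\tau$, which converts the integral identity into $F(1) = \gamma$, the running-integral inequality into the pointwise inequality $F(t) > \rho(t)$ on $[0,1]$, and the boundary conditions on $u$ into boundary conditions on $F$. Since $f \in \mathcal{C}^G$ and $f>0$, any $F \in \mathcal{C}^{G+1}$ produces $u = F'/f \in \mathcal{C}^G$, and repeated application of Leibniz to $F' = fu$ shows that specifying $u^{(i)}(0) = \alpha_i$ and $u^{(i)}(1) = \beta_i$ for $i = 0,\dots,H$ is equivalent to prescribing $F^{(j)}(0)$ and $F^{(j)}(1)$ for $j = 1,\dots,H+1$ (the resulting linear system is triangular with nonzero diagonal $f(0), f(1) > 0$). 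Together with $F(0) = 0$ and $F(1) = \gamma$, this yields $2(H+2)$ scalar endpoint constraints on $F$.

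First, I would construct a nominal $F_0$ by Hermite interpolation: the unique polynomial of degree at most $2H+3$ matching these $2(H+2)$ endpoint values. Being a polynomial, $F_0 \in \mathcal{C}^\infty \subset \mathcal{C}^{G+1}$, so the regularity requirement is automatic.

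Next, I would correct $F_0$ upward so that $F > \rho$ on $[0,1]$ without disturbing the boundary data. Let $\phi(t) \triangleq t^{H+2}(1-t)^{H+2}$, which is $\mathcal{C}^\infty$, nonnegative on $[0,1]$, strictly positive on $(0,1)$, and whose derivatives of order $\leq H+1$ vanish at both endpoints. Hence $F_0 + c\phi$ satisfies the same endpoint data as $F_0$ for every $c \geq 0$. From $\rho(0) < 0 = F_0(0)$ and $\rho(1) < \gamma = F_0(1)$, together with continuity of $F_0$ and $\rho$, the set $K \triangleq \{t \in [0,1] : F_0(t) \leq \rho(t)\}$ is a compact subset of the open interval $(0,1)$. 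Therefore $\phi$ is bounded below by a positive constant on $K$ while $\rho - F_0$ is bounded above on $K$; choosing $c$ large enough forces $F_0 + c\phi > \rho$ on $K$, and $F_0 + c\phi \geq F_0 > \rho$ holds automatically on $[0,1]\setminus K$. Define $F \triangleq F_0 + c\phi$ and $u \triangleq F'/f$.

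The main obstacle is reconciling the two-point boundary data, the integral identity, and the strict pointwise inequality in a single function. The key is the splitting $F = F_0 + c\phi$: $F_0$ carries the boundary data and integral value, while $c\phi$ — whose $(H+1)$-jet at each endpoint is zero — pushes $F$ above $\rho$ without perturbing any of the prescribed data. Everything else is bookkeeping: $u \in \mathcal{C}^G$ follows from $F \in \mathcal{C}^\infty$ and $f \in \mathcal{C}^G$ with $f>0$; the boundary values $u^{(i)}(0) = \alpha_i$, $u^{(i)}(1) = \beta_i$ are recovered from the triangular Leibniz system; and the required integral conditions are exactly $F(1) - F(0) = \gamma$ and $F(t) > \rho(t)$ on $[0,1]$.
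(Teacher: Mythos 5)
Your proposal is correct, and it follows a genuinely different route from the paper. The paper works directly at the level of $u$, positing an explicit four-term ansatz: a polynomial $\sum_i a_i t^i$ solved (triangularly) against the left-endpoint derivative data, a term $t^{H+1}\sum_i b_i(1-t)^i$ for the right-endpoint data, a term $c_0 t^{H+1}(1-t)^{H+1}$ whose weighted integral fixes $\int_0^1 f u = \gamma$, and finally a non-polynomial correction $d(t) \propto \frac{(1-2t)}{t^2(1-t)^2 f(t)}e^{-1/(t(1-t))}$, chosen precisely so that $\int_0^t f d = d_0 e^{-1/(t(1-t))}$: this is exponentially flat at both endpoints (so it disturbs neither the jets of $u$ nor the total integral) and is scaled to lift the running integral above $\rho$. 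You instead change the unknown to the primitive $F = \int_0^t f u$, which collapses all constraints into endpoint jet data for $F$ (via the triangular Leibniz system with diagonal $f(0), f(1) > 0$, using $G \ge H$), the two values $F(0)=0$, $F(1)=\gamma$, and the open pointwise condition $F > \rho$; Hermite interpolation handles the jet data in one stroke, and because the inequality-correction now only needs to preserve the $(H+1)$-jets of $F$ and the value $F(1)=0+\gamma$, the polynomial bump $c\,t^{H+2}(1-t)^{H+2}$ suffices — no exponentially flat function and no division of the corrective term by $f$ is needed. Your compactness argument for choosing $c$ (the bad set $K=\{F_0\le\rho\}$ is compact in $(0,1)$ thanks to $\rho(0)<0$ and $\rho(1)<\gamma$) is exactly the point the paper leaves implicit in "pick a suitable $d_0$," so your version is, if anything, more complete; the paper's direct construction, on the other hand, exhibits $u$ explicitly and generalizes verbatim to the matrix-valued bookkeeping it is used for in Proposition~\ref{prp:U}. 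Both proofs are valid.
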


\begin{proof} 
We will construct a $\mathcal{C}^{G}$ function $u$ that satisfies all the conditions of the lemma. To this end, let 
\begin{multline*}
u(t) = \overbrace{\sum_{i=0}^{H} a_{i}t^{i}}^{a(t)} 
+ \overbrace{t^{H+1}\sum_{i=0}^{H} b_{i}(1-t)^{i}}^{b(t)} 
\\* 
+ \underbrace{c_{0}t^{H+1}(1-t)^{H+1}}_{c(t)} 
+ d(t),
\end{multline*}
where,
\begin{equation*}
d(t) \triangleq 
\begin{cases}
\frac{d_{0}(1-2t)}{t^{2} (1-t)^{2} f(t)} e^{-\frac{1}{t(1-t)}}, & t \in (0, 1). 
\\* 
0, & t \in \{0, 1\}.
\end{cases}
\end{equation*}
Let $\mathcal{H} \triangleq \{0, 1, \dots, H\}$. 
We show that the coefficients $a_{i}$, $b_{i}$, $c_{0}$, and $d_{0}$ are determined by $\alpha_{i}$, $\beta_{i}$, $\gamma$, and $\rho(t)$, respectively, where $i \in \mathcal{H}$.

First, notice that the derivatives of $d(t)$ up to order $G$ are zero at the boundaries $t=0$ and $t=1$, since the term $e^{-\frac{1}{t(1-t)}}$ dominates any polynomial terms at the boundaries and $f \in \mathcal{C}^{G}$. 
In addition, 
\begin{equation*}
\int_{0}^{t} f(\tau)d(\tau) \, \dd \tau = 
\begin{cases}
d_{0} e^{-\frac{1}{t(1-t)}}, & t \in (0, 1). 
\\* 
0, & t \in \{0, 1\}.
\end{cases}
\end{equation*}

To this end, we adopt the convention that the 0th derivative of a function is the function itself. 
Since $\frac{\dd^{i} u}{\dd t^{i}}(0) = \frac{\dd^{i} a}{\dd t^{i}}(0) = \alpha_{i}$ for $i \in \mathcal{H}$, the equations in the unknown coefficients $a_{i}$, where $i \in \mathcal{H}$, can be solved first. 
Since $\frac{\dd^{i} u}{\dd t^{i}}(1) = \frac{\dd^{i} (a+b)}{\dd t^{i}}(1) = \beta_{i}$ for $i \in \mathcal{H}$, the equations in the unknown coefficients $b_{i}$, where $i \in \mathcal{H}$, are in a triangular form. 
So $b_{i}$ can be solved next. 
Since $a(t)$ and $b(t)$ are known, $\int_{0}^{1} f(t) \big(a(t) + b(t)\big) \, \dd t$ is determined. 
It follows that $\int_{0}^{1} f(t)c(t) \, \dd t$ is fixed, from which $c_{0}$ can be solved.

Lastly, since $\rho(0) < 0$ and $\rho(1) < \gamma$, we can pick a suitable $d_{0} \geq 0$, so that $\int_{0}^{t} f(\tau)u(\tau) \, \dd \tau > \rho(t)$ for all $t \in (0,1)$. \hfill
\end{proof}


\begin{proof}[Proof of Proposition~\ref{prp:U}]
Let $A_{1} = 0$, $B_{1} = 1$, and 
\begin{equation} \label{eqn:AB-par}
A_{k+1} = 
\begin{bmatrix}
0_{k} & I_{k} \\
0 & 0_{k}\t
\end{bmatrix}
, ~
B_{k+1} = 
\begin{bmatrix}
0_{k} \\
1
\end{bmatrix}
, ~ 
k \in \mathbb{N},
\end{equation}
where $0_{k} \in \mathbb{R}^{k}$ is the zero column vector. 
Clearly, $A_{k+1}$ can be written as 
\begin{equation} \label{eqn:A-par}
A_{k+1} = 
\begin{bmatrix}
A_{k} & B_{k} \\
0_{k}\t & 0
\end{bmatrix}
, \quad 
k \in \mathbb{N}.
\end{equation}
Since $(A, B)$ is controllable, there exist a $p \times n$ matrix $\bar{F}$ and a vector $v \in \mathbb{R}^{p}$, such that $(A + B \bar{F}, B v)$ is controllable. 
It follows that there exists a coordinate transformation matrix $T$, such that $\big(T (A + B \bar{F}) T^{-1}, T B v\big)$ is in the control canonical form \cite{wonham1985linear}. 
It follows that there exists a row vector $g \in \mathbb{R}^{1 \times n}$, such that $\big(T (A + B \bar{F}) T^{-1} + T B v g, T B v\big) = (A_{n}, B_{n})$. 
Let $F \triangleq \bar{F} + v g T$. 
Then $\big(T (A + B F) T^{-1}, T B v\big) = (A_{n}, B_{n})$. 
In view of \eqref{ode:sigma-ABK}, it is without loss of generality to assume that $(A, B)$ is in the canonical form $(A_{n}, B_{n})$. 
We will prove the proposition by induction on the dimension $n$.

When $n=1$, $A_{1} = 0$ and $B_{1} = 1$, and thus
\begin{equation} \label{ode:Sig-1d}
\dot{\Sigma} = 2U(t) + M(t) + 2\nu(t) \Sigma.
\end{equation}
Hence,
\begin{equation*}
\Sigma(1) = e^{2\int_{0}^{1} \nu(t) \, \dd t} \Sigma(0) + \int_{0}^{1} e^{2\int_{t}^{1} \nu(\tau) \, \dd \tau} \big( 2U(t) + M(t) \big) \, \dd t.
\end{equation*}
Given $\Sigma(0) = \Sigma_{0}$, $\Sigma(1) = \Sigma_{1}$, and $\mathcal{C}^{H}$ functions $M(t) \succeq 0$ and $\nu(t) \geq 0$, it follows that $\int_{0}^{1} e^{2\int_{t}^{1} \nu(\tau) \, \dd \tau} U(t) \, \dd t$ is fixed. 
In view of Lemma \ref{lem:u-bd}, there exists a $\mathcal{C}^{H}$ function $U(t)$ on $[0, 1]$, which satisfies the preassigned boundary conditions $\frac{\dd^{i} U}{\dd t^{i}}(0)$ and $\frac{\dd^{i} U}{\dd t^{i}}(1)$, where $i \in \mathcal{H}$, such that $\Sigma(0) = \Sigma_{0}$, $\Sigma(1) = \Sigma_{1}$, and $\Sigma(t) > 0$ for all $t \in [0, 1]$. 
Hence, Proposition \ref{prp:U} holds for $n=1$.

Assume Proposition \ref{prp:U} holds for $n=k \in \mathbb{N}$. 
We now show that Proposition \ref{prp:U} also holds for $n=k+1$. 
To this end, let $M(t), \nu(t) \in \mathcal{C}^{H + k}$. 
We first partition $\Sigma(t)$, $M(t)$, and $U(t)$ as follows. 
\begin{equation} \label{eqn:Sig-M-U-par}
\Sigma(t) = 
\begin{bmatrix}
\Sigma_{\Box} & \Sigma_{\dagger} \\
\Sigma_{\dagger}\t & \Sigma_{\star}
\end{bmatrix}
, ~
M(t) = 
\begin{bmatrix}
M_{\Box} & M_{\dagger} \\
M_{\dagger}\t & M_{\star}
\end{bmatrix}
, ~
U(t) = 
\begin{bmatrix}
U_{\dagger} \\ 
U_{\star}
\end{bmatrix},
\end{equation}
where $\Sigma_{\Box}, M_{\Box} \in \mathbb{R}^{k \times k}$, $\Sigma_{\dagger}, M_{\dagger}, U_{\dagger} \in \mathbb{R}^{k}$, and $\Sigma_{\star}$, $M_{\star}$, $U_{\star} \in \mathbb{R}$. 
Then, by \eqref{ode:sigma-ABU}, \eqref{eqn:AB-par}, \eqref{eqn:A-par}, \eqref{eqn:Sig-M-U-par}, we can write
\begin{align}
\dot{\Sigma}_{\Box} 
&= A_{k} \Sigma_{\Box} + \Sigma_{\Box} A_{k}\t + B_{k} \Sigma_{\dagger}\t + \Sigma_{\dagger} B_{k}\t + M_{\Box}(t) 
\nonumber \\* 
&\hspace{53mm} + 2\nu(t) \Sigma_{\Box}, 
\label{ode:Sig-ul} \\
\dot{\Sigma}_{\dagger}
&= A_{k} \Sigma_{\dagger} + B_{k} \Sigma_{\star}(t) + U_{\dagger}(t) + M_{\dagger}(t) + 2\nu(t) \Sigma_{\dagger}, 
\label{ode:Sig-ur} \\
\dot{\Sigma}_{\star}
&= 2 U_{\star}(t) + M_{\star}(t) + 2\nu(t) \Sigma_{\star}. 
\label{ode:Sig-lr}
\end{align}
Notice that \eqref{ode:Sig-ul} has the same form as \eqref{ode:sigma-ABU}, except that $\Sigma_{\dagger}$ serves as the control input for $\Sigma_{\Box}$. 
Moreover, \eqref{ode:Sig-lr} has the same form as \eqref{ode:Sig-1d}. 
In view of \eqref{ode:Sig-ur} and \eqref{ode:Sig-lr}, the control input $U$ directly controls $\Sigma_{\dagger}$ and $\Sigma_{\star}$.

Notice that $\Sigma(0) = \Sigma_{0}$ and $\Sigma(1) = \Sigma_{1}$ are given, 
that $U(t)$'s boundary conditions $\frac{\dd^{i} U}{\dd t^{i}}(0)$ and $\frac{\dd^{i} U}{\dd t^{i}}(1)$ are given for $i \in \mathcal{H}$, 
and that $M(t)$'s and $\nu(t)$'s boundary conditions $\frac{\dd^{j} M}{\dd t^{j}}(0)$, $\frac{\dd^{j} M}{\dd t^{j}}(1)$, $\frac{\dd^{j} \nu}{\dd t^{i}}(0)$, and $\frac{\dd^{j} \nu}{\dd t^{j}}(1)$ can be calculated for $j \in \mathcal{H}$. 
Then, it follows from \eqref{ode:Sig-ur} and \eqref{ode:Sig-lr}, by induction, that $\Sigma_{\dagger}(t)$'s and $\Sigma_{\star}(t)$'s boundary conditions $\frac{\dd^{i} \Sigma_{\dagger}}{\dd t^{i}}(0)$, $\frac{\dd^{i} \Sigma_{\dagger}}{\dd t^{i}}(1)$, $\frac{\dd^{i} \Sigma_{\star}}{\dd t^{i}}(0)$, and $\frac{\dd^{i} \Sigma_{\star}}{\dd t^{i}}(1)$ are fixed for $i \in \{0, 1, \dots, H+1\}$. 
Since $\Sigma_{\Box}$ is of size $k \times k$ and, in view of \eqref{ode:Sig-ul}, $\Sigma_{\dagger}$ plays the role of the control input for $\Sigma_{\Box}$, the induction assumption applies. 
Hence, Proposition \ref{prp:U} holds for $H + 1$. 
Thus, there exists a $\mathcal{C}^{H + 1}$ function $\Sigma_{\dagger}(t)$ on $[0, 1]$, which satisfies the fixed boundary conditions $\frac{\dd^{i} \Sigma_{\dagger}}{\dd t^{i}}(0)$ and $\frac{\dd^{i} \Sigma_{\dagger}}{\dd t^{i}}(1)$, where $i \in \{0, 1, \dots, H+1\}$, such that $\Sigma_{\Box}(t)$ satisfies $\Sigma_{\Box}(t) \succ 0$ for all $t \in [0, 1]$ and the boundary conditions $\Sigma_{\Box}(0)$ and $\Sigma_{\Box}(1)$. 
Since \eqref{ode:Sig-lr} has the same form as \eqref{ode:Sig-1d}, by Lemma \ref{lem:u-bd}, there exists a $\mathcal{C}^{H}$ function $U_{\star}(t)$ on $[0, 1]$, which satisfies the preassigned boundary conditions $\frac{\dd^{i} U_{\star}}{\dd t^{i}}(0)$ and $\frac{\dd^{i} U_{\star}}{\dd t^{i}}(1)$, where $i \in \mathcal{H}$, such that $\Sigma_{\star}(t)$ satisfies $\Sigma_{\star}(t) > \Sigma_{\dagger}(t)\t \Sigma_{\Box}^{-1}(t) \Sigma_{\dagger}(t)$ for all $t \in [0, 1]$ and the boundary conditions $\Sigma_{\star}(0)$ and $\Sigma_{\star}(1)$. 
By Sylvester's criterion \cite{horn2012matrix}, $\Sigma(t) \succ 0$ for all $t \in [0, 1]$. 
With $\Sigma_{\dagger}(t), \Sigma_{\star}(t) \in \mathcal{C}^{H + 1}$, we can determine $U_{\dagger}(t) \in \mathcal{C}^{H}$ from \eqref{ode:Sig-ur}, whose boundary conditions $\frac{\dd^{i} U_{\dagger}}{\dd t^{i}}(0)$ and $\frac{\dd^{i} U_{\dagger}}{\dd t^{i}}(1)$, where $i \in \mathcal{H}$, are automatically satisfied from our previous construction when $\frac{\dd^{j} \Sigma_{\dagger}}{\dd t^{j}}(0)$ and $\frac{\dd^{j} \Sigma_{\dagger}}{\dd t^{j}}(1)$ are satisfied for $j \in \{0, 1, \dots, H+1\}$. 
Therefore, Proposition \ref{prp:U} also holds for $n=k+1$. \hfill
\end{proof}


\subsection{Proofs of Lemmas}

First, we introduce a result from \cite{kilicaslan2010existence} on the existence of solution to the matrix Riccati differential equation \eqref{ode:pi-Q-simp}.
\begin{proposition} \label{prp:pi-Q-exist}
Given an initial condition $\Pi(0) = \Pi_{0}$, \eqref{ode:pi-Q-simp} has a solution on the interval $[0, 1]$ if and only if $\Phi_{11}(t, 0) + \Phi_{12}(t, 0) \Pi_{0}$ is invertible for all $t \in [0, 1]$. 
The solution $\Pi(t)$ is unique on $[0, 1]$ and is given by
\begin{equation*}
\Pi(t) = \! \Big(\Phi_{21}(t, 0) + \Phi_{22}(t, 0) \Pi_{0}\Big) \! \Big(\Phi_{11}(t, 0) + \Phi_{12}(t, 0) \Pi_{0}\Big)^{-1}.
\end{equation*}
\end{proposition}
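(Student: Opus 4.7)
The plan is to apply the classical Hamiltonian-system linearization of the matrix Riccati equation. Under the simplifications $\nu(t) \equiv 0$ and $R(t) \equiv I_{p}$ that are standing throughout Section~\ref{sec:soltn-nd}, \eqref{ode:pi-Q-simp} reduces to $\dot{\Pi} = -A\t\Pi - \Pi A + \Pi BB\t\Pi - Q$, which is precisely the Riccati equation naturally associated with the Hamiltonian matrix $M(t)$ used to define $\Phi_{M}$. I would introduce the auxiliary pair
\begin{equation*}
\begin{bmatrix} X(t) \\ Y(t) \end{bmatrix} \triangleq \Phi_{M}(t, 0) \begin{bmatrix} I_{n} \\ \Pi_{0} \end{bmatrix},
\end{equation*}
so that $X(t) = \Phi_{11}(t, 0) + \Phi_{12}(t, 0)\Pi_{0}$ and $Y(t) = \Phi_{21}(t, 0) + \Phi_{22}(t, 0)\Pi_{0}$, and by construction $(X, Y)$ satisfies the linear Hamiltonian system $\dot{X} = AX - BB\t Y$, $\dot{Y} = -QX - A\t Y$, with $X(0) = I_{n}$, $Y(0) = \Pi_{0}$.

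For the sufficient direction, I would assume $X(t)$ is invertible on all of $[0, 1]$ and define $\Pi(t) := Y(t) X(t)^{-1}$. Differentiating the identity $Y = \Pi X$, substituting the Hamiltonian equations for $\dot{X}$ and $\dot{Y}$, and post-multiplying by $X^{-1}$ leads directly to $\dot{\Pi} = -A\t\Pi - \Pi A + \Pi BB\t\Pi - Q$, i.e.\ \eqref{ode:pi-Q-simp}, with $\Pi(0) = \Pi_{0}$. This delivers existence and the claimed closed-form expression simultaneously.

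For the necessary direction, I would argue in reverse. Assuming $\Pi(t)$ exists on $[0, 1]$, I would let $\tilde{X}(t)$ be the state transition matrix of the time-varying matrix $A(t) - B(t)B(t)\t\Pi(t)$, which is automatically invertible on $[0,1]$, and define $\tilde{Y}(t) := \Pi(t)\tilde{X}(t)$. A direct computation using the Riccati equation would show that $(\tilde{X}, \tilde{Y})$ satisfies the same linear Hamiltonian system as $(X, Y)$ with matching initial data $(I_{n}, \Pi_{0})$; uniqueness for that linear ODE then forces $(\tilde{X}, \tilde{Y}) = (X, Y)$, so $X(t) = \Phi_{11}(t, 0) + \Phi_{12}(t, 0)\Pi_{0}$ inherits the invertibility of $\tilde{X}(t)$ on $[0, 1]$. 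Uniqueness of $\Pi(t)$ on $[0, 1]$ then follows from the local Lipschitz continuity of the Riccati right-hand side in $\Pi$, together with the absence of finite escape time guaranteed by the invertibility of $X$. I do not foresee a serious obstacle here; the only point requiring care is to confirm that the simplifications adopted in Section~\ref{sec:soltn-nd} align the Riccati equation exactly with the Hamiltonian system whose transition matrix is $\Phi_{M}$, so that the identification of $(X, Y)$ with the first column block of $\Phi_{M}(t, 0)$ is unambiguous.
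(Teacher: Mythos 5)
Your argument is correct, but note that the paper does not actually prove this proposition: it is imported verbatim as a known result from the cited reference \cite{kilicaslan2010existence}, so there is no in-paper proof to compare against. What you have written is the classical Radon/Hamiltonian linearization argument, which is indeed the standard way this result is established. Both directions are sound: for sufficiency, setting $\Pi := Y X^{-1}$ with $\bigl[X\t \; Y\t\bigr]\t = \Phi_{M}(t,0)\bigl[I_{n}\; \Pi_{0}\t\bigr]\t$ and differentiating recovers \eqref{ode:pi-Q-simp} with the stated closed form; for necessity, your comparison of $(\tilde{X},\tilde{Y}) = \bigl(\Phi_{A-BB\t\Pi}(t,0),\,\Pi\,\Phi_{A-BB\t\Pi}(t,0)\bigr)$ with $(X,Y)$ via uniqueness of the linear Hamiltonian system correctly transfers the invertibility of the transition matrix to $\Phi_{11}(t,0)+\Phi_{12}(t,0)\Pi_{0}$, and it simultaneously yields the formula $\Pi = YX^{-1}$. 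The only cosmetic point: uniqueness of $\Pi(t)$ already follows from the local Lipschitz continuity of the quadratic right-hand side (Picard--Lindel\"of); the ``absence of finite escape time'' is needed only for existence on all of $[0,1]$, which your explicit formula provides anyway. Your identification of the simplified Riccati equation (with $\nu \equiv 0$, $R \equiv I_{p}$) with the Hamiltonian matrix $M(t)$ defining $\Phi_{M}$ is also accurate, so the proof stands as a self-contained justification of what the paper takes on citation.
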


\begin{proof}[Proof of Lemma \ref{lem:phi-inv}]
First, we show that $\Phi_{11}(t, s)$ is invertible. 
Let $\Pi(t)$ be the solution to \eqref{ode:pi-Q-simp} with $\Pi(0) = 0_{n \times n}$, and
let $\mathcal{I}_{0}$ be the maximal interval of existence of $\Pi(t)$. 
Let $\Pi_{1}(t)$ be the solution to \eqref{ode:pi-Q-simp} when $Q(t) \equiv 0_{n \times n}$ and $\Pi(0) = 0_{n \times n}$. 
Clearly, we have $\Pi_{1}(t) \equiv 0_{n \times n}$ for all $t \in \mathbb{R}$. 
Let $\Pi_{2}(t)$ be the solution to \eqref{ode:pi-Q-simp} when $B(t) \equiv 0_{n \times n}$ and $\Pi(0) = 0_{n \times n}$. 
Since \eqref{ode:pi-Q-simp} is linear in this case, and it follows that the solution is given by
\begin{equation*}
\Pi_{2}(t) = - \int_{0}^{t} \Phi_{A}(\tau, t)\t Q(\tau) \Phi_{A}(\tau, t) \, \dd \tau, \quad t \in \mathbb{R}.
\end{equation*}
By the monotonicity of the matrix Riccati differential equation \cite{freiling1996generalized}, we have
\begin{align*}
&\Pi_{1}(t) \preceq \Pi(t) \preceq \Pi_{2}(t), \quad t \in \mathcal{I}_{0} \cap (- \infty, 0], \\
&\Pi_{2}(t) \preceq \Pi(t) \preceq \Pi_{1}(t), \quad t \in \mathcal{I}_{0} \cap [0, + \infty).
\end{align*}
Hence, $\Pi(t)$ has no finite escape time and $\mathcal{I}_{0} = \mathbb{R}$. 
Indeed, suppose $\mathcal{I}_{0} = (t_{0}, t_{1})$, where $t_{0} < 0 < t_{1}$. 
For $t \in [0, t_{1})$, and since $\Pi_{2}(t_{1}) \preceq \Pi_{2}(t) \preceq 0$, it follows that $\Pi_{2}(t_{1}) \preceq \Pi(t) \preceq 0$. 
Clearly, the set $\{\Pi \in \mathbb{R}^{n \times n}: \, \Pi_{2}(t_{1}) \preceq \Pi = \Pi\t \preceq 0\}$ is compact in $\mathbb{R}^{n \times n}$. 
Hence, we have $t_{1} = + \infty$ \cite{perko1996differential}. 
It can be similarly shown that $t_{0} = - \infty$. 
Thus, $\mathcal{I}_{0} = \mathbb{R}$. 
By Proposition \ref{prp:pi-Q-exist}, $\Phi_{11}(t, 0)$ is invertible for all $t \in \mathbb{R}$. A similar argument shows that $\Phi_{11}(t, s)$ is invertible for all $t, s \in \mathbb{R}$.

Next, we show that $\Phi_{11}(t, s) = \Phi_{22}(s, t)\t$. By \eqref{eqn:phi-blk-11}, it follows that
\begin{align*}
\Phi_{11}(t, s)^{-1} 
&= \Phi_{22}(t, s)\t - \Phi_{11}(t, s)^{-1} \Phi_{12}(t, s) \Phi_{21}(t, s)\t \\
&= \Phi_{22}(t, s)\t - \Phi_{12}(t, s)\t \Phi_{11}(t, s)^{- \mbox{\tiny\sf T}} \Phi_{21}(t, s)\t. 
\end{align*}
Since the matrix $\Phi_{11}(t, s)$ is invertible, the Schur complement of $\Phi_{11}(t, s)$ in $\Phi_{M}(t, s)$ is given by 
\begin{equation*}
\Phi_{22}(t, s) - \Phi_{21}(t, s) \Phi_{11}(t, s)^{-1} \Phi_{12}(t, s) = \Phi_{22}(s, t)^{-1}. 
\end{equation*}
Therefore, we have $\Phi_{11}(t, s) = \Phi_{22}(s, t)\t$. 
The other equality can be shown similarly. \hfill
\end{proof}


\begin{proof}[Proof of Lemma \ref{lem:phi-12-21}] 
By equations \eqref{eqn:phi-blk-12} and \eqref{eqn:phi-11-22}, 
\begin{align*}
\Phi_{12}(t, s) 
&= \Phi_{11}(t, s) \Phi_{12}(t, s)\t \Phi_{11}(t, s)^{- \mbox{\tiny\sf T}} 
\\ 
&= \Phi_{22}(s, t)\t \Phi_{12}(t, s)\t \Phi_{11}(t, s)^{- \mbox{\tiny\sf T}}. 
\end{align*}
Since $\Phi_{M}(t, s) \Phi_{M}(s, t) = I_{2n}$, we have $\Phi_{11}(t, s) \Phi_{12}(s, t) + \Phi_{12}(t, s) \Phi_{22}(s, t) = 0$. 
Thus, 
\begin{equation*}
- \Phi_{12}(s, t)\t 
= \Phi_{22}(s, t)\t \Phi_{12}(t, s)\t \Phi_{11}(t, s)^{- \mbox{\tiny\sf T}} 
= \Phi_{12}(t, s). 
\end{equation*}
Similarly, it follows from equations \eqref{eqn:phi-blk-21} and \eqref{eqn:phi-11-22} that 
\begin{align*}
\Phi_{21}(t, s)
&= \Phi_{22}(t, s) \Phi_{21}(t, s)\t \Phi_{22}(t, s)^{- \mbox{\tiny\sf T}} 
\\ 
&= \Phi_{11}(s, t)\t \Phi_{21}(t, s)\t \Phi_{22}(t, s)^{- \mbox{\tiny\sf T}}. 
\end{align*}
Since $\Phi_{21}(t, s) \Phi_{11}(s, t) + \Phi_{22}(t, s) \Phi_{21}(s, t) = 0$, we obtain 
\begin{equation*}
- \Phi_{21}(s, t)\t 
= \Phi_{11}(s, t)\t \Phi_{21}(t, s)\t \Phi_{22}(t, s)^{- \mbox{\tiny\sf T}} 
= \Phi_{21}(t, s). 
\end{equation*}
This completes the proof. \hfill
\end{proof}


\begin{proof}[Proof of Lemma \ref{lem:phi-inv-mono}]
First, since 
\begin{equation*}
\Phi_{11}(t, s) \Phi_{12}(s, t) + \Phi_{12}(t, s) \Phi_{22}(s, t) = 0,
\end{equation*}
we have \eqref{eqn:phi-12}.  
Since $\Phi_{11}(t, s)$ is invertible for all $t, s \in \mathbb{R}$, we show that $\Phi_{12}(t, s)$ is invertible for $t \neq s$ by showing that $- \Phi_{11}(t, s)^{-1} \Phi_{12}(t, s)$ is invertible for $t \neq s$. 
We can compute that
\begin{align*}
&\frac{\partial}{\partial t} \Big(- \Phi_{11}(t, s)^{-1} \Phi_{12}(t, s)\Big) 
= - \Phi_{11}(t, s)^{-1} \Big(\frac{\partial}{\partial t} \Phi_{12}(t, s)\Big) 
\\* 
&\hspace{15mm} + \Phi_{11}(t, s)^{-1} \Big(\frac{\partial}{\partial t} \Phi_{11}(t, s)\Big) \Phi_{11}(t, s)^{-1} \Phi_{12}(t, s) \\
&= \Phi_{11}(t, s)^{-1} B(t) B(t)\t \Phi_{11}(t, s)^{- \mbox{\tiny\sf T}}.
\end{align*}
Hence,
\begin{multline} \label{eqn:phi-11-12}
- \Phi_{11}(t, s)^{-1} \Phi_{12}(t, s) = 
\\* 
\int_{s}^{t} \Phi_{11}(\tau, s)^{-1} B(\tau) B(\tau)\t \Phi_{11}(\tau, s)^{- \mbox{\tiny\sf T}} \, \dd \tau.
\end{multline}
Assume that there exist $t, s \in \mathbb{R}$ with $s < t$, such that $- \Phi_{11}(t, s)^{-1} \Phi_{12}(t, s) \succeq 0$ is singular. 
Then, there exists $x \in \mathbb{R}^{n}$ with $x \neq 0$, such that $x\t \Big(- \Phi_{11}(t, s)^{-1} \Phi_{12}(t, s)\Big) x = 0$. 
It follows from \eqref{eqn:phi-11-12} that
\begin{align*}
& x\t \Phi_{11}(\tau, s)^{-1} B(\tau) \equiv 0, \quad \tau \in (s, t), \\
& \frac{\partial^{k}}{\partial \tau^{k}} \Big(x\t \Phi_{11}(\tau, s)^{-1} B(\tau)\Big) \equiv 0, ~~ \tau \in (s, t), ~~ 1 \leq k \leq n-1.
\end{align*}
It follows that
\begin{equation} \label{eqn:omega-n}
x\t \Phi_{11}(\tau, s)^{-1} \Omega_{n}(\tau) \equiv 0, \quad \tau \in (s, t),
\end{equation}
where,
\begin{align*}
\Omega_{n}(t) &\triangleq 
\begin{bmatrix}
\Upsilon_{0}(t) & \Upsilon_{1}(t) & \cdots & \Upsilon_{n-1}(t)
\end{bmatrix}
, \\
\Upsilon_{0}(t) &\triangleq B(t)
, \\ 
\Upsilon_{k}(t) &\triangleq \! \Big(- A(t) + B(t) B(t)\t \Phi_{21}(t, s) \Phi_{11}(t, s)^{-1} \Big) \Upsilon_{k-1}(t) 
\\* 
&\hspace{36mm} + \dot{\Upsilon}_{k-1}(t)
, ~~~ 1 \leq k \leq n-1.
\end{align*}
Clearly, $\range \Omega_{n}(t) = \range \Theta_{n}(t)$ for all $t \in \mathbb{R}$. 
Thus, \eqref{eqn:omega-n} implies that 
\begin{equation} \label{eqn:theta-n}
x\t \Phi_{11}(\tau, s)^{-1} \Theta_{n}(\tau) \equiv 0, \quad \tau \in (s, t).
\end{equation}
Since $\big(A(t), B(t)\big)$ is totally controllable on $[0, 1]$, there exists $t_{*} \in (s, t)$ such that $\rank \Theta_{n}(t_{*}) = n$. 
Since $\Phi_{11}(t_{*}, s)^{-1}$ is invertible, it follows from \eqref{eqn:theta-n} that $x = 0$, which contradicts the assumption that $x \neq 0$. 
Thus, $- \Phi_{11}(t, s)^{-1} \Phi_{12}(t, s) \succ 0$ for all $s < t$. 
A similar argument shows that $- \Phi_{11}(t, s)^{-1} \Phi_{12}(t, s) \prec 0$ for all $t < s$. 
Therefore, $- \Phi_{11}(t, s)^{-1} \Phi_{12}(t, s)$ is invertible for all $t \neq s$, and $\Phi_{12}(t, s)$ is invertible for all $t \neq s$. 
Furthermore, in light of \eqref{eqn:phi-11-12}, we can use a similar argument to show \eqref{ineq:phi-mono-1} and \eqref{ineq:phi-mono-2}. \hfill
\end{proof}


\begin{proof}[Proof of Lemma \ref{lem:pi-Q-exist-sol}]
First, we show \eqref{ineq:pi-Q-bd}. 
Notice that, for any fixed $r$, $- \Phi_{12}(r, t)^{-1} \Phi_{11}(r, t) = \Phi_{22}(t, r) \Phi_{12}(t, r)^{-1}$ satisfies the same Riccati differential equation as $\Pi(t)$, that is,
\begin{align*}
&\frac{\partial}{\partial t} \Big(\Phi_{22}(t, r) \Phi_{12}(t, r)^{-1}\Big) 
= - A(t)\t \Big(\Phi_{22}(t, r) \Phi_{12}(t, r)^{-1}\Big) 
\\* 
&\hspace{20mm} - \Big(\Phi_{22}(t, r) \Phi_{12}(t, r)^{-1}\Big) A(t) - Q(t) 
\\* 
&\hspace{1mm} + \Big(\Phi_{22}(t, r) \Phi_{12}(t, r)^{-1}\Big) B(t) B(t)\t \Big(\Phi_{22}(t, r) \Phi_{12}(t, r)^{-1}\Big).
\end{align*}
In light of the monotonicity of the matrix Riccati differential equation \cite{freiling1996generalized}, it follows that $- \Phi_{12}(0, s)^{-1} \Phi_{11}(0, s) \prec \Pi(s) \prec - \Phi_{12}(1, s)^{-1} \Phi_{11}(1, s)$ for some $s \in [0, 1]$ implies that, for all $t \in [0, 1]$,
$- \Phi_{12}(0, t)^{-1} \Phi_{11}(0, t) \prec \Pi(t) \prec - \Phi_{12}(1, t)^{-1} \Phi_{11}(1, t)$.
Hence, \eqref{ineq:pi-Q-bd} holds, which implies that \eqref{cond:pi-Q-exist} holds if and only if 
\begin{equation} \label{ineq:pi-Q-0}
\Pi(0) \prec - \Phi_{12}(1, 0)^{-1} \Phi_{11}(1, 0).
\end{equation}

In view of Proposition \ref{prp:pi-Q-exist}, it suffices to show that $\Phi_{11}(t, 0) + \Phi_{12}(t, 0) \Pi(0)$ is invertible for all $t \in [0, 1]$ if and only if \eqref{ineq:pi-Q-0} holds. 
By \eqref{ineq:phi-mono-1}, for $0 < t_{1} < t_{2} \leq 1$,
\begin{equation*}
\Phi_{12}(t_{1}, 0)^{-1} \Phi_{11}(t_{1}, 0) \prec \Phi_{12}(t_{2}, 0)^{-1} \Phi_{11}(t_{2}, 0) \prec 0.
\end{equation*}
Thus, the matrix inequality \eqref{ineq:pi-Q-0} implies that, for all $t \in (0, 1]$,
$\Phi_{12}(t, 0)^{-1} \Phi_{11}(t, 0) + \Pi(0) \prec 0$. 
Since $\Phi_{11}(0, 0) + \Phi_{12}(0, 0) \Pi(0) = I_{n}$ is invertible and 
\begin{multline} \label{eqn:phi-12-pi-0}
\Phi_{11}(t, 0) + \Phi_{12}(t, 0) \Pi(0) = 
\\* 
\Phi_{12}(t, 0) \Big(\Phi_{12}(t, 0)^{-1} \Phi_{11}(t, 0) + \Pi(0)\Big), ~~ t \in (0, 1],
\end{multline}
is invertible, it follows that $\Phi_{11}(t, 0) + \Phi_{12}(t, 0) \Pi(0)$ is invertible for all $t \in [0, 1]$. 
This shows the sufficiency of \eqref{ineq:pi-Q-0}. 
Next, we show the necessity of \eqref{ineq:pi-Q-0}. 
By \eqref{eqn:phi-12-pi-0}, $\Phi_{12}(t, 0)^{-1} \Phi_{11}(t, 0) + \Pi(0)$ is invertible for all $t \in (0, 1]$. 
Since 
\begin{equation*}
\lim_{t \to 0^{+}} \Phi_{12}(t, 0)^{-1} \Phi_{11}(t, 0) + \Pi(0) = - \infty,
\end{equation*}
it must be true that $\Phi_{12}(t, 0)^{-1} \Phi_{11}(t, 0) + \Pi(0) \prec 0$ for all $t \in (0, 1]$. 
Thus, \eqref{ineq:pi-Q-0} holds. 
Therefore, \eqref{cond:pi-Q-exist} is necessary and sufficient for \eqref{ode:pi-Q-simp} to admit a unique solution $\Pi(t)$ on $[0, 1]$.

Lastly, we show \eqref{sol:pi-Q}. 
It follows from Proposition \ref{prp:pi-Q-exist} that \eqref{sol:pi-Q} holds for $t \in [s, 1]$. 
Likewise, \eqref{sol:pi-Q} holds for $t \in [0, s]$ \cite{kuvcera1973review}. 
Thus, the unique solution $\Pi(t)$ is given by \eqref{sol:pi-Q}. \hfill
\end{proof}


\begin{proof}[Proof of Lemma \ref{lem:phi-AB-pi-Q}]
It is straightforward to verify that
\begin{equation*}
\Phi_{11}(s, s) + \Phi_{12}(s, s) \Pi(s) = I_{n}.
\end{equation*}
Since,
\begin{align*}
\frac{\partial}{\partial t} \Phi_{11}(t, s) &= A(t) \Phi_{11}(t, s) - B(t)B(t)\t \Phi_{21}(t, s), \\
\frac{\partial}{\partial t} \Phi_{12}(t, s) &= A(t) \Phi_{12}(t, s) - B(t)B(t)\t \Phi_{22}(t, s),
\end{align*}
in light of \eqref{sol:pi-Q}, we can compute that
\begin{align*}
&\frac{\partial}{\partial t} \Big(\Phi_{11}(t, s) + \Phi_{12}(t, s) \Pi(s)\Big) \\
&= A(t)\Big(\Phi_{11}(t, s) + \Phi_{12}(t, s) \Pi(s)\Big) 
\\* 
&\qquad\qquad - B(t)B(t)\t\Big(\Phi_{21}(t, s) + \Phi_{22}(t, s) \Pi(s)\Big) \\
&= \Big(A(t) - B(t)B(t)\t\Pi(t)\Big) 
\\* 
&\qquad\qquad \times \Big(\Phi_{11}(t, s) + \Phi_{12}(t, s) \Pi(s)\Big).
\end{align*}
This completes the proof. \hfill
\end{proof}


\begin{proof}[Proof of Lemma \ref{lem:map-inv-Q}] 
Clearly, $\partial \bar{f}\big(\vect(\Pi_{0})\big)$ is continuous in $\vect(\Pi_{0})$. 
First, we show that $\partial \bar{f}\big(\vect(\Pi_{0})\big)$ is nonsingular at each $\Pi_{0} \prec - \big(\Phi_{12}^{10}\big)^{-1} \Phi_{11}^{10}$. 
Since $\Phi_{\Pi}^{10} \otimes \Phi_{\Pi}^{10}$ is nonsingular, it suffices to show that the term in the square brackets of \eqref{eqn:map-jacob-Q}, that is,
\begin{multline*}
S \triangleq \Sigma_{0} \otimes W_{10} + W_{10} \otimes \Sigma_{0} 
\\* 
+ \int_{0}^{1} P_{s} \otimes \big(W_{10} - W_{s0}\big) + \big(W_{10} - W_{s0}\big) \otimes P_{s} \, \dd s,
\end{multline*}
is nonsingular. 
One can readily check that $S$ is symmetric, because $\Sigma_{0} \succ 0$, $W_{10} \prec 0$, $P_{s} \succeq 0$, and $W_{10} - W_{s0} \preceq 0$ are all symmetric. 
Let $X \neq 0$ be an $n \times n$ matrix. 
Then,
\begin{align*}
&\vect(X)\t S \vect(X) = \vect(X)\t \vect\bigg( W_{10} X \Sigma_{0} + \Sigma_{0} X W_{10} 
\\* 
&\hspace{13mm} + \int_{0}^{1} \big(W_{10} - W_{s0}\big) X P_{s} + P_{s} X \big(W_{10} - W_{s0}\big) \, \dd s \bigg) \\
&= \trace \bigg( X\t W_{10} X \Sigma_{0} + X\t \Sigma_{0} X W_{10} 
\\* 
&\hspace{4mm} + \int_{0}^{1} X\t \big(W_{10} - W_{s0}\big) X P_{s} + X\t P_{s} X \big(W_{10} - W_{s0}\big) \, \dd s \bigg) \\
&\leq \trace \Big( \Sigma_{0}^{\frac{1}{2}} X\t W_{10} X \Sigma_{0}^{\frac{1}{2}} + \Sigma_{0}^{\frac{1}{2}} X W_{10} X\t \Sigma_{0}^{\frac{1}{2}} \Big) < 0.
\end{align*}
Thus, $S \prec 0$. 
Therefore, $\partial \bar{f}\big(\vect(\Pi_{0})\big)$ is nonsingular at each $\vect(\Pi_{0})$ in the domain of $\bar{f}$.

Next, we show that the map $f$ is proper, that is, for any compact subset $\mathcal{K} \subset \{\Sigma_{1} \in \mathbb{R}^{n \times n}: \, \Sigma_{1} = \Sigma_{1}\t \succ 0\}$, the inverse image $f^{-1}(\mathcal{K}) \subset \{\Pi_{0} \in \mathbb{R}^{n \times n}: \, \Pi_{0} = \Pi_{0}\t \prec - \big(\Phi_{12}^{10}\big)^{-1} \Phi_{11}^{10}\}$ is compact. 
Since $f$ is continuous and $\mathcal{K}$ is closed, the inverse image $f^{-1}(\mathcal{K})$ is also closed. 
Since $\mathcal{K}$ is bounded in $\mathbb{R}^{n \times n}$, in view of 
\eqref{map:pi0-sigma1-Q}, the set 
\begin{equation*}
\Big\{ \Big(\Phi_{11}^{10} + \Phi_{12}^{10} \Pi_{0}\Big) \Sigma_{0} \Big(\big(\Phi_{11}^{10}\big)\t + \Pi_{0} \big(\Phi_{12}^{10}\big)\t \Big): \, \Pi_{0} \in f^{-1}(\mathcal{K}) \Big\}
\end{equation*}
is also bounded in $\mathbb{R}^{n \times n}$. 
Since $\Sigma_{0}$ and $\Phi_{12}^{10}$ are invertible, $f^{-1}(\mathcal{K})$ is bounded in $\mathbb{R}^{n \times n}$. 
Therefore, $f^{-1}(\mathcal{K})$ is compact, and thus $f$ is proper. 
Since the set of positive definite matrices is convex, it is simply connected~\cite{krantz2002implicit}. 
By Hadamard's global inverse function theorem~\cite{krantz2002implicit}, $f$ is a homeomorphism. \hfill
\end{proof}


In light of \eqref{eqn:phi-AB-pi-Q}, for notational simplicity, let 
\begin{align*}
\Phi_{\Pi}(t, s) &\triangleq \Phi_{A-BR^{-1}B\t\Pi}(t, s) = \Phi_{11}(t, s) + \Phi_{12}(t, s) \Pi(s), 
\\
\bar{M}(t, s) &\triangleq \int_{s}^{t} \Phi_{\Pi}(t, \tau) B(\tau) R(\tau)^{-1} B(\tau)\t \Phi_{\Pi}(t, \tau)\t \, \dd \tau. 
\end{align*}

\begin{lemma} \label{lem:gramian-AB-pi-Q}
For all $t, s \in \mathbb{R}$, 
\begin{equation} \label{eqn:gramian-AB-pi-Q}
\bar{M}(t, s) = - \Phi_{12}(t, s) \Phi_{\Pi}(t, s)\t. 
\end{equation}
\end{lemma}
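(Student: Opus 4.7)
The plan is to show that the integrand in $\bar{M}(t,s)$ is an exact $\tau$-derivative of $\Phi_{12}(t,\tau)\Phi_{\Pi}(t,\tau)\t$, and then integrate from $s$ to $t$ using the fact that $\Phi_{12}(t,t)=0$.

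First, I would gather the two differentiation rules needed. Writing $\frac{\partial}{\partial \tau}\Phi_{M}(t,\tau) = -\Phi_{M}(t,\tau) M(\tau)$ and reading off the $(1,2)$ block gives
\begin{equation*}
\frac{\partial}{\partial \tau}\Phi_{12}(t,\tau) = \Phi_{11}(t,\tau) B(\tau) B(\tau)\t + \Phi_{12}(t,\tau) A(\tau)\t.
\end{equation*}
On the other hand, since $\Phi_{\Pi}(t,\tau)$ is the state transition matrix of $A(\cdot) - B(\cdot) B(\cdot)\t \Pi(\cdot)$ (by Lemma~\ref{lem:phi-AB-pi-Q}), I have
\begin{equation*}
\frac{\partial}{\partial \tau}\Phi_{\Pi}(t,\tau)\t = -\bigl(A(\tau)\t - \Pi(\tau) B(\tau) B(\tau)\t\bigr)\Phi_{\Pi}(t,\tau)\t.
\end{equation*}

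Second, I would combine these via the product rule. Applying $\partial_{\tau}$ to $\Phi_{12}(t,\tau)\Phi_{\Pi}(t,\tau)\t$, the two terms involving $A(\tau)\t \Phi_{\Pi}(t,\tau)\t$ cancel, leaving
\begin{equation*}
\bigl[\Phi_{11}(t,\tau) + \Phi_{12}(t,\tau)\Pi(\tau)\bigr] B(\tau) B(\tau)\t \Phi_{\Pi}(t,\tau)\t = \Phi_{\Pi}(t,\tau) B(\tau) B(\tau)\t \Phi_{\Pi}(t,\tau)\t,
\end{equation*}
where the last equality uses $\Phi_{\Pi}(t,\tau) = \Phi_{11}(t,\tau) + \Phi_{12}(t,\tau)\Pi(\tau)$ from Lemma~\ref{lem:phi-AB-pi-Q}. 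This is exactly the integrand defining $\bar{M}(t,s)$.

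Finally, integrating from $\tau = s$ to $\tau = t$ and using $\Phi_{12}(t,t) = 0$ yields $\bar{M}(t,s) = \Phi_{12}(t,t)\Phi_{\Pi}(t,t)\t - \Phi_{12}(t,s)\Phi_{\Pi}(t,s)\t = -\Phi_{12}(t,s)\Phi_{\Pi}(t,s)\t$, as claimed. There is no serious obstacle here; the only nontrivial step is spotting the exact-derivative structure, and the mild caveat is that $\Pi(\tau)$ must be defined along the integration interval, which is built into the context in which $\Phi_{\Pi}$ itself makes sense.
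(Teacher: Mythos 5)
Your proof is correct, but it takes a genuinely different route from the paper's. You differentiate in the \emph{second} time argument $\tau$, recognize the integrand $\Phi_{\Pi}(t,\tau)B(\tau)B(\tau)\t\Phi_{\Pi}(t,\tau)\t$ as the exact derivative $\tfrac{\partial}{\partial\tau}\bigl(\Phi_{12}(t,\tau)\Phi_{\Pi}(t,\tau)\t\bigr)$ (the $A(\tau)\t$ terms cancel and the identity $\Phi_{\Pi}(t,\tau)=\Phi_{11}(t,\tau)+\Phi_{12}(t,\tau)\Pi(\tau)$ from Lemma~\ref{lem:phi-AB-pi-Q} plus symmetry of $\Pi(\tau)$ does the rest), and then integrate, using $\Phi_{12}(t,t)=0$. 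The paper instead differentiates in the \emph{first} argument $t$: it shows that $\bar{M}(t,s)$ and $-\Phi_{12}(t,s)\Phi_{\Pi}(t,s)\t$ satisfy the same Lyapunov differential equation with the same value at $t=s$ and concludes by uniqueness of solutions to linear ODEs; to put the right-hand side into that form it additionally needs $\bigl(\Phi_{22}(t,s)-\Pi(t)\Phi_{12}(t,s)\bigr)\Phi_{\Pi}(t,s)\t=I_{n}$, which rests on Lemmas~\ref{lem:phi-inv} and~\ref{lem:phi-12-21}. Your argument is thus slightly more economical: it needs neither the uniqueness argument nor those two auxiliary identities, only the block form of $\partial_\tau\Phi_{M}(t,\tau)=-\Phi_{M}(t,\tau)M(\tau)$ and Lemma~\ref{lem:phi-AB-pi-Q}; the paper's verification-style proof buys uniformity with how the rest of the appendix handles candidate solutions of matrix ODEs. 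Two minor points: your caveat that $\Pi(\tau)$ must exist on the integration interval is exactly the standing assumption (condition \eqref{cond:pi-Q-exist}) under which $\Phi_{\Pi}$ is defined, and your use of $B(\tau)B(\tau)\t$ in place of $B(\tau)R(\tau)^{-1}B(\tau)\t$ is consistent with the section's convention $R(t)\equiv I_{p}$ used in defining $M(t)$.
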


\begin{proof} 
When $t = s$, $\bar{M}(s, s) = 0 = - \Phi_{12}(s, s) \Phi_{\Pi}(s, s)\t$. 
It is straightforward to check that $\bar{M}(t, s)$ satisfies the linear differential equation 
\begin{align*}
\frac{\partial}{\partial t} \bar{M}(t, s) 
&= \Big(A(t) - B(t) R(t)^{-1} B(t)\t \Pi(t)\Big) \bar{M}(t, s) 
\\* 
&\hspace{6mm}
+ \bar{M}(t, s) \Big(A(t) - B(t) R(t)^{-1} B(t)\t \Pi(t)\Big)\t 
\\* 
&\hspace{6mm}
+ B(t) R(t)^{-1} B(t)\t. 
\end{align*}
Next, we verify that $- \Phi_{12}(t, s) \Phi_{\Pi}(t, s)\t$ satisfies the same linear differential equation. 
\begin{align*}
&\frac{\partial}{\partial t} \Big(- \Phi_{12}(t, s) \Phi_{\Pi}(t, s)\t\Big) 
\\ 
&= - \Big(\frac{\partial}{\partial t} \Phi_{12}(t, s)\Big) \Phi_{\Pi}(t, s)\t - \Phi_{12}(t, s) \Big(\frac{\partial}{\partial t} \Phi_{\Pi}(t, s)\t\Big) 
\\
&= - \Big(A(t) \Phi_{12}(t, s) - B(t) R(t)^{-1} B(t)\t \Phi_{22}(t, s)\Big) \Phi_{\Pi}(t, s)\t 
\\* 
&\hspace{5mm} 
- \Phi_{12}(t, s) \Phi_{\Pi}(t, s)\t \Big(A(t) - B(t) R(t)^{-1} B(t)\t \Pi(t)\Big)\t 
\\
&= \Big(- \Phi_{12}(t, s) \Phi_{\Pi}(t, s)\t\Big) \Big(A(t) - B(t) R(t)^{-1} B(t)\t \Pi(t)\Big)\t 
\\* 
&\hspace{5mm} 
+ A(t) \Big(- \Phi_{12}(t, s) \Phi_{\Pi}(t, s)\t\Big) 
\\* 
&\hspace{5mm} 
+ B(t) R(t)^{-1} B(t)\t \Phi_{22}(t, s) \Phi_{\Pi}(t, s)\t. 
\end{align*}
By \eqref{eqn:phi-AB-pi-Q}, we have $\Big(\Phi_{11}(s, t)\t + \Pi(t) \Phi_{12}(s, t)\t\Big) \Phi_{\Pi}(t, s)\t = I_{n}$. 
By \eqref{eqn:phi-11-22} and \eqref{eqn:phi-12-equiv}, $\Big(\Phi_{22}(t, s) - \Pi(t) \Phi_{12}(t, s)\Big) \Phi_{\Pi}(t, s)\t = I_{n}$. 
It follows that 
\begin{align*}
&\frac{\partial}{\partial t} \Big(- \Phi_{12}(t, s) \Phi_{\Pi}(t, s)\t\Big) 
\\ 
&= \Big(A(t) - B(t) R(t)^{-1} B(t)\t \Pi(t)\Big) \Big(- \Phi_{12}(t, s) \Phi_{\Pi}(t, s)\t\Big) 
\\* 
&\hspace{5mm}
+ \hspace{-0.5mm} \Big( \hspace{-1mm} - \Phi_{12}(t, s) \Phi_{\Pi}(t, s)\t \hspace{-0.5mm} \Big) \hspace{-0.5mm} \Big( \hspace{-0.5mm} A(t) - B(t) R(t)^{-1} B(t)\t \Pi(t) \hspace{-0.5mm} \Big)\t 
\\* 
&\hspace{5mm}
+ B(t) R(t)^{-1} B(t)\t. 
\end{align*}
Since $\bar{M}(t, s)$ and $- \Phi_{12}(t, s) \Phi_{\Pi}(t, s)\t$ satisfy the same linear equation with the same initial condition, \eqref{eqn:gramian-AB-pi-Q} holds. \hfill
\end{proof}

\end{document}